\newtheorem{thm}{Theorem}[section]
\newtheorem{lem}[thm]{Lemma}
\newtheorem{cor}[thm]{Corollary}
\theoremstyle{definition}
\newtheorem{defn}[thm]{Definition}
\newcommand{\ep}{\epsilon}
\newcommand{\clb}{\mathbb{B}}
\newcommand{\clcn}{\mathbb{C}^{n}}
\renewcommand{\Re}{\operatorname{Re}}
\numberwithin{equation}{section}
\title{The Bergman-Fridman invariant on some classes of pseudoconvex domains}
\keywords{Bergman distance, Scaling, Fridman invariant}
\subjclass{Primary: 32F45; Secondary: 32A25, 32A36}
\author{Rahul Kumar and Prachi Mahajan}
\address{Rahul Kumar: Department of Mathematics, Indian Institute of Technology Bombay, Powai, Mumbai 400076, India}
\email{rahulkramen@gmail.com}
\address{Prachi Mahajan: Department of Mathematics, Indian Institute of Technology Bombay, Powai, Mumbai 400076, India}
\email{prachi.mjn@iitb.ac.in}
\begin{document}
\maketitle
\begin{abstract}
We study the boundary behaviour of a variant of the Fridman's function (defined in terms of the Bergman metric) on Levi corank one domains, strongly pseudoconvex domains, smoothly bounded convex domains in $ \mathbb{C}^n $ and polyhedral domains in $ \mathbb{C}^2 $. Two examples are given to show 
that this invariant detects (local) strong  pseudoconvexity of domain from its boundary behaviour.
\end{abstract}

\maketitle

%\tableofcontents

\section{Introduction}

Denote by $ \mathbb{B}^n $ the unit ball of $ \mathbb{C}^n $. For a bounded domain $ X \subset \mathbb{C}^n $, and a point $ p \in X $, denote by $B^b_X (p, r) $, the ball around $ p $ of radius $ r >0 $ with respect to the Bergman distance on $ X $. Let $ \mathcal{R}(p) $ be the set of all radii $ r > 0 $ such that a holomorphic imbedding $ f : \mathbb{B}^n \rightarrow X $ with $ B^b_X (p, r) \subset f (\mathbb{B}^n) $ can be found. Then $ \mathcal{R}(p) $ is non-empty, and the function
\begin{equation*}
 h^b_X (p) := \inf_{r \in \mathcal{R}(p)} \frac{1}{r},
\end{equation*}
is well-defined. That this is a biholomorphic invariant follows from its deﬁnition and the fact that Bergman distance itself is invariant under biholomorphisms.

The above construction was introduced by B. L. Fridman in \cite{Frid1}, \cite{Frid2} -- on a given Kobayashi hyperbolic complex manifold of dimension $ n $, say $ X $, wherein the  balls $ B^k_X(p,r) $ with respect to the Kobayashi distance on $ X $ were considered in place of $ B^b_X(p,r) $. The resultant function $ h^k_X (\cdot) $ is a non-negative real-valued function on $ X $ which is invariant under biholomorphisms. Fridman’s definition of $ h^k_X $ is flexible enough to accommodate metrics other that the Kobayashi metric – this is already evident in \cite{MV} where the Carath\'{e}odory metric on $ X $ was considered; also see \cite{NV}. Fridman's construction induces in a similar way the invariant $ h^b_X $ on a bounded domain $ X \subset \mathbb{C}^n $ -- which will be referred to as the Bergman-Fridman invariant henceforth. We shall focus on the Bergman metric exclusively and write $ h_X (\cdot)$ and $ B_X(\cdot, r) $ instead of $ h^b_X (\cdot)$ and $ B^b_X(\cdot, r) $ for notational convenience.

The first purpose of this note is to investigate the boundary behaviour of the Bergman-Fridman invariant for a variety of pseudoconvex domains. The notation $ (z_1, z') $ is used for $ (z_1, z_2, \ldots, z_n) \in \mathbb{C}^n $ throughout.

\begin{thm} \label{T0}
 Let $ D $ be a bounded domain in $ \mathbb{C}^n $ and $ \{p^j\} $ be a sequence of points in $ D $ that converges to $ p^0 \in \partial D $. 
 \begin{enumerate}
 
  \item [(i)] If $ D $ is a $ C^{\infty}$-smooth Levi corank one domain and $ \Gamma $ is a nontangential cone in $ D $ with vertex at $ p^{0}$, then 
%\begin{equation*}
$ \displaystyle\lim_{\Gamma \ni p^j \rightarrow p^0} h_D(z) \rightarrow h_{D_{\infty}} \left( (-1,0') \right) $,
%\end{equation*}
where 
\begin{equation*}
 D_{\infty}=\Big\{ z \in \mathbb{C}^n : 2 \Re z_1 + P_{2m} \left(z_n, \overline{z}_n \right) + \sum_{j=2}^{n-1} \vert z_j \vert^2 < 0 \Big\},
\end{equation*}
$m\geq 1$ is a positive integer and $ P_{2m} \left(z_n, \overline{z}_n \right) $ is a subharmonic polynomial of degree at most $ 2m $ without any harmonic terms, and $ 2m $ is the $1$-type of $ \partial D $ at $ p^0 $.

\item[(ii)] If $ D $ is a $ C^2 $-smooth strongly pseudoconvex domain,
 then $ \lim_{ p^j \rightarrow p^0} h_D(z) \rightarrow 0 $.

\item [(iii)] If $ D $ is a $ C^{\infty} $-smooth convex domain,
then $ \lim_{ p^j \rightarrow p^0} h_D(z) \rightarrow h_{D_{\infty}} \left( (-1,0') \right) $, where $ D_{\infty}$ is the limiting domain associated to the domain $D$ and the sequence $ p^j \rightarrow p^0 $. 

\item[(iv)] If $n=2$ and $ D \subset \mathbb{C}^2$ is a strongly pseudoconvex polyhedral domain, then $ \lim_{ p^j \rightarrow p^0} h_D(z) \rightarrow h_{D_{\infty}} \left( q^0 \right) $, where $ D_{\infty}$ is the limiting domain associated to the domain $D$ and $ q^0 \in D_{\infty} $.

 \end{enumerate}

\end{thm}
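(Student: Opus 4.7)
The plan is to treat all four parts by a single scaling strategy, with the specific scaling machinery differing between the four geometric settings. For the sequence $p^j\to p^0$ one constructs a sequence of injective holomorphic maps $\sigma_j:D\to D_j:=\sigma_j(D)$ such that $D_j$ converges to the model domain $D_\infty$ in the local Hausdorff sense and $\sigma_j(p^j)\to q^0\in D_\infty$, where $q^0=(-1,0')$ in (i) and (iii), and $q^0$ is the interior base point given in (iv). The scalings are: Catlin's anisotropic scaling relative to the normal form of a Levi corank one boundary in (i); Pinchuk's classical scaling in (ii); the Frankel--Gaussier rescaling by a sequence of affine squeezing automorphisms in (iii); and the Bedford--Pinchuk / Berteloot-type scaling adapted to polyhedral boundaries in (iv). Since $h$ is biholomorphically invariant, $h_D(p^j)=h_{D_j}(\sigma_j(p^j))$, so the problem reduces to proving $h_{D_j}(\sigma_j(p^j))\to h_{D_\infty}(q^0)$.

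The second ingredient is the stability of the Bergman metric under the scaling. A Ramadanov-type theorem, available in each of the four classes, gives $K_{D_j}\to K_{D_\infty}$, and hence the Bergman distances $b_{D_j}\to b_{D_\infty}$ locally uniformly on $D_\infty\times D_\infty$. A consequence that I will use repeatedly is that for each fixed $r>0$ and each $\epsilon>0$ one has $B_{D_\infty}(q^0,r-\epsilon)\subset B_{D_j}(\sigma_j(p^j),r)\subset B_{D_\infty}(q^0,r+\epsilon)$ (read as inclusions on a fixed compact subset of $D_\infty$) for all sufficiently large $j$.

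The core step is passing to the limit in the Fridman invariant. For the upper bound $\limsup h_{D_j}(\sigma_j(p^j))\leq h_{D_\infty}(q^0)$, I take $r$ with $1/r$ arbitrarily close to $h_{D_\infty}(q^0)$ and an embedding $f:\mathbb{B}^n\to D_\infty$ with $B_{D_\infty}(q^0,r)\subset f(\mathbb{B}^n)$. A small inward dilation combined with the Hausdorff convergence $D_j\to D_\infty$ yields embeddings $f_j:\mathbb{B}^n\to D_j$ close to $f$, and the ball stability above then gives $B_{D_j}(\sigma_j(p^j),r-\epsilon)\subset f_j(\mathbb{B}^n)$ for large $j$. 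For the lower bound $\liminf h_{D_j}(\sigma_j(p^j))\geq h_{D_\infty}(q^0)$ (trivial unless the left side is finite), I pick embeddings $g_j:\mathbb{B}^n\to D_j$ with $B_{D_j}(\sigma_j(p^j),r_j)\subset g_j(\mathbb{B}^n)$ and $1/r_j$ realizing the $\liminf$, normalized by postcomposition with $\operatorname{Aut}(\mathbb{B}^n)$ so that $g_j(0)=\sigma_j(p^j)$. Montel's theorem extracts a normal limit $g:\mathbb{B}^n\to\overline{D_\infty}$, and since $g(0)=q^0$ lies in the interior, Hurwitz's theorem forces $g$ to be a holomorphic embedding into $D_\infty$. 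The ball inclusion survives the limit by the stability paragraph above, yielding $B_{D_\infty}(q^0,r)\subset g(\mathbb{B}^n)$ for any $r<\liminf r_j$.

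The main obstacle is the lower bound step: I must rule out that the normal limit $g$ touches $\partial D_\infty$ on a positive-measure set, and must show that the containment of Bergman balls passes cleanly to the limit. In each of the four settings this boils down to uniform interior peak-function / localization estimates for $D_j$, which are the technical heart of the respective scaling theories and are available in the literature. Part (ii) falls out as the simplest case: the strongly pseudoconvex model $D_\infty=\{2\operatorname{Re} z_1+|z'|^2<0\}$ is biholomorphic to $\mathbb{B}^n$, so taking $f$ to be that biholomorphism yields $B_{D_\infty}(q^0,r)\subset f(\mathbb{B}^n)=D_\infty$ for every $r>0$, whence $h_{D_\infty}(q^0)=0$ and the limit statement reduces to vanishing.
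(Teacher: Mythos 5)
Your overall scheme is the same as the paper's: rescale along $p^j$, use invariance to transfer the computation of $h_D(p^j)$ to the scaled domains $D^j$, prove stability of the Bergman distance and of Bergman balls under the scaling, and then squeeze $h_{D^j}(\sigma_j(p^j))$ between an upper and lower bound that both converge to $h_{D_\infty}(q^0)$. The difficulty is that the phrase ``$K_{D_j}\to K_{D_\infty}$, and hence the Bergman distances converge'' compresses what is, in fact, the technical heart of the paper into a single word, and several of the ingredients it hides are not routine.

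First, convergence of Bergman kernels on the diagonal (Ramadanov) gives, after Cauchy estimates, convergence of the infinitesimal Bergman metrics $b_{D^j}\to b_{D_\infty}$; but passing from infinitesimal-metric convergence to \emph{uniform convergence of the integrated distances} $d^b_{D^j}\to d^b_{D_\infty}$ on compacta (Theorem \ref{L5}) requires Bergman completeness of the $D^j$ and of $D_\infty$, positivity and nondegeneracy of $b_{D_\infty}$, stability of the Christoffel symbols, extraction of geodesics from $z^0$ to $q^j$, boundedness of the initial velocities, and a continuous-dependence theorem for the geodesic ODE. None of this is present in your proposal, and without it the ``ball stability'' inclusion that you use in both the upper and the lower bound is unjustified. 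Second, the Bergman completeness and $K_{D_\infty}(z,z)>0$ hypotheses that feed Theorem \ref{L5} are themselves nontrivial for the model domains: for the Levi corank one case the paper spends an entire subsection constructing a nonvanishing $L^2$ holomorphic peak function on $D_\infty$ via the Bedford--Forn\ae ss construction on a bumped homogeneous model, and then invoking Ahn--Gaussier--Kim to upgrade local peak functions to global ones and deduce inner-Carath\'eodory (hence Bergman) completeness; for the convex and polyhedral cases one needs Gromov/Carath\'eodory hyperbolicity of the limit domain. Your plan never addresses why $D_\infty$ should be Bergman complete or why its kernel is nonvanishing, yet every inclusion of Bergman balls you write depends on it.

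Third, in the lower-bound step you invoke Montel's theorem to extract a normal limit of the embeddings $g_j:\mathbb B^n\to D^j$. Since the targets $D^j$ (and $D_\infty$) are unbounded, Montel does not apply directly; the paper instead routes through $(\pi^j)^{-1}\circ F^j:\mathbb B^n\to D$ and invokes tautness/normality results (Theorem 3.11 of Thai--Thu) to obtain a convergent subsequence, and this is precisely where the peak-function/localization estimates you gesture at are actually needed. Finally, the parenthetical ``(trivial unless the left side is finite)'' is misleading: the delicate case is $\liminf h_{D^j}(\sigma_j(p^j))=0$, which is finite but not trivial. There one must show $h_{D_\infty}(q^0)=0$, and the paper does so by a separate argument (its Case (a)) showing $D_\infty$ is biholomorphic to $\mathbb B^n$ via the same normal-family machinery. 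Your framework can absorb this by letting $r_j\to\infty$ and using Bergman completeness to exhaust $D_\infty$ by the balls $B_{D_\infty}(q^0,r)$, but as written you have dismissed the one case that requires its own argument.
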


A smoothly bounded pseudoconvex finite type domain $ D \subset \mathbb{C}^n $ is said to Levi corank one if its Levi form
at every boundary point has at most one degenerate eigenvalue. The class of Levi corank one domains includes smooth pseudoconvex finite type domains in $ \mathbb{C}^2 $ and strongly pseudoconvex domains in $ \mathbb{C}^n $. Note that the boundary $ \partial D $ can \textit{a priori} be of inﬁnite type near the boundary point $ p^0 $ in Theorem \ref{T0} (iii). The precise definition of polyhedral domains dealt with in (iv) above is postponed to Section \ref{poly} - but note that such domains arise as the intersection of finitely many strongly pseudoconvex domains, where the singularities of the boundary are generated only by a normal crossing.  

It is worthwhile mentioning that in cases (ii)-(iv), $ p^j $ is any \textit{arbitrary} sequence converging to $ p^0 \in \partial D $ (and not restricted to a non-tangential approach region). In each of the four cases listed above, by scaling $ D $ along the sequence $ \{p^j\} $, we obtain a sequence of domains $ D^j $, each biholomorphic to $ D $, such that $ D^j $ converge in the local Hausdorff sense to a \textit{limit} domain $ D_{\infty} $. Moreover, by the scaling technique, the sequence $ \{ p^j\} $ is sent to a convergent sequence in $ D_{\infty} $. If $ D $ is Levi corank one or convex finite type domain, then the associated limit domain $ D_{\infty} $ is a polynomial domain. Understanding the boundary limits for $ h_D $ amounts to controlling the Bergman kernels $ K_{D^j} $ and the Bergman distance $ d^b_{D^j} $ for the scaled domains $ D^j $. So, the main issue here is to show that the Bergman distance is stable under the scaling process (refer Theorem \ref{L5}). 
Further, it turns out that the non-vanishing of the Bergman kernel $ K_{D_{\infty}} $ along the diagonal and  Bergman completeness of $ D_{\infty} $ in each of the cases considered in Theorem \ref{T0} are vital for studying the possible convergence of $ d^b_{D^j} $.

It is natural to ask if the converse implication to Theorem \ref{T0} part (ii) holds, i.e., given a bounded domain $ D $ with smooth boundary, does $ \lim_{z \rightarrow \partial D} h_D(z) = 0 $ imply that
$ D $ is strongly pseudoconvex? An affirmative answer is 
given here when $ D \subset \mathbb{C}^n $ is either a Levi corank one domain or a convex domain. In particular, the Bergman-Fridman invariant function has a good efficacy in characterizing strong pseudoconvexity for a given domain if its boundary behaviour is known. More precisely,

\begin{thm} \label{T4}
Let $D \subset \mathbb C^n$ be a bounded Levi corank one domain with $ C^{\infty}$-smooth boundary. Let $ p^0 \in \partial D $ be such that $h_D(z) \rightarrow 0$ as $ z \rightarrow p^0 $, then $\partial D$ is strongly pseudoconvex near $p^0$. 
\end{thm}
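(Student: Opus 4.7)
The plan is to argue by contradiction. Since strong pseudoconvexity is an open condition on $\partial D$, the conclusion of Theorem~\ref{T4} fails only if $p^0$ itself is weakly pseudoconvex, in which case the $1$-type of $\partial D$ at $p^0$ equals $2m$ for some integer $m \geq 2$. Fix any nontangential sequence $p^j \to p^0$ in $D$. Theorem~\ref{T0}(i) gives
\begin{equation*}
h_D(p^j) \longrightarrow h_{D_\infty}\bigl((-1, 0')\bigr),
\end{equation*}
where $D_\infty = \{z : 2\Re z_1 + P_{2m}(z_n, \bar z_n) + \sum_{j=2}^{n-1} |z_j|^2 < 0\}$ is the scaled limit at $p^0$. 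Combined with the hypothesis $h_D(p^j) \to 0$, this forces $h_{D_\infty}((-1, 0')) = 0$, so the theorem reduces to the rigidity assertion:

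\emph{Main claim.} For $D_\infty$ as above with $m \geq 2$, one has $h_{D_\infty}((-1, 0')) > 0$.

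First I would prove the main claim by contradiction. If $h_{D_\infty}((-1, 0')) = 0$, there exist holomorphic embeddings $f_k : \mathbb{B}^n \hookrightarrow D_\infty$ with $B_{D_\infty}((-1, 0'), k) \subset f_k(\mathbb{B}^n)$. The Bergman completeness of $D_\infty$---already exploited in the proof of Theorem~\ref{T0}(i)---implies that these Bergman balls exhaust $D_\infty$, so the subdomains $\Omega_k := f_k(\mathbb{B}^n)$ form an increasing exhaustion of $D_\infty$ by subdomains each biholomorphic to $\mathbb{B}^n$. After composing each $f_k$ with an element of $\Aut(\mathbb{B}^n)$, we may arrange $f_k(0) = (-1, 0')$. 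The Bergman kernel transformation law
\begin{equation*}
K_{\Omega_k}\bigl((-1, 0'), (-1, 0')\bigr) = |\det f_k'(0)|^{-2}\, K_{\mathbb{B}^n}(0, 0),
\end{equation*}
combined with the monotone convergence $K_{\Omega_k}((-1, 0'), (-1, 0')) \to K_{D_\infty}((-1, 0'), (-1, 0')) > 0$, shows that $|\det f_k'(0)|$ lies in a fixed compact subset of $(0, \infty)$. Next, $D_\infty$ is Kobayashi hyperbolic and in fact taut (standard for Levi corank one polynomial models), while the family $\{f_k\}$ is not compactly divergent since $f_k(0) = (-1, 0')$; therefore $\{f_k\}$ is normal on every compact subset of $\mathbb{B}^n$. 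Likewise, since $g_k := f_k^{-1}$ is uniformly bounded on every compact subset of $D_\infty$ (taking values in $\mathbb{B}^n$), the family $\{g_k\}$ is normal on $D_\infty$. Extracting subsequential limits $f : \mathbb{B}^n \to D_\infty$ and $g : D_\infty \to \mathbb{B}^n$ and passing to the limit in $f_k \circ g_k = \mathrm{id}_{\Omega_k}$ and $g_k \circ f_k = \mathrm{id}_{\mathbb{B}^n}$---using the exhaustion property to make sense of the first identity on any compact in $D_\infty$---shows that $f$ and $g$ are mutually inverse, giving a biholomorphism $D_\infty \cong \mathbb{B}^n$.

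This is the desired contradiction: the $1$-type at $0 \in \partial D_\infty$ equals $2m \geq 4$, while every boundary point of $\mathbb{B}^n$ has $1$-type $2$, and the $1$-type is a biholomorphic invariant of smoothly bounded pseudoconvex boundary points. The principal obstacle is precisely this last step---invoking the biholomorphic invariance of the boundary $1$-type in the unbounded setting, where a direct appeal to Fefferman's extension theorem is unavailable. One robust alternative is to derive the contradiction from the Lu Qi-Keng characterization of $\mathbb{B}^n$: upgrading the Bergman-kernel convergence above to convergence of the Bergman metric and its curvature tensor under the exhaustion $\Omega_k \nearrow D_\infty$, the constant holomorphic sectional curvature of the Bergman metric of $\mathbb{B}^n$ transfers to the Bergman metric of $D_\infty$, so $D_\infty \cong \mathbb{B}^n$; and this is incompatible with the non-homogeneity of $D_\infty$ for $m \geq 2$, whose automorphism group is of strictly lower dimension than that of $\mathbb{B}^n$ and does not act transitively.
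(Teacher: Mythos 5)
Your overall strategy is the same as the paper's: scale along a sequence approaching $p^0$, use Theorem~\ref{T0}(i) to transfer the hypothesis $h_D \to 0$ to $h_{D_\infty}(q^0) = 0$, deduce $D_\infty \cong \mathbb{B}^n$ (your re-derivation is essentially a rerun of Theorem~\ref{T6}(i)), and then argue that this forces the $1$-type at $p^0$ to be $2$. But there are two issues, one minor and one fatal.

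The minor issue is your choice of ``any nontangential sequence.'' Theorem~\ref{T0}(i) only gives a subharmonic polynomial $P_{2m}$ of degree \emph{at most} $2m$; for a general nontangential approach the degree-$2m$ part could vanish, and then your claim that $0 \in \partial D_\infty$ has $1$-type $2m$ is unjustified. The paper scales along the \emph{inward normal} precisely because that forces $Q$ to be the \emph{homogeneous} degree-$2m$ part of the Taylor expansion of the defining function, tying the $1$-type at $0 \in \partial D_\infty$ to the $1$-type at $p^0 \in \partial D$. You should use the normal approach.

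The fatal gap is exactly the one you flag as ``the principal obstacle,'' and neither of your two proposed escape routes closes it. The $1$-type comparison requires boundary regularity of the biholomorphism $D_\infty \to \mathbb{B}^n$, which you correctly observe you do not have. Your second route --- transferring constant holomorphic sectional curvature of the Bergman metric and invoking ``non-homogeneity of $D_\infty$ for $m \geq 2$'' --- is circular: once you have a biholomorphism $D_\infty \cong \mathbb{B}^n$, the constant curvature is automatic and says nothing new, and the non-homogeneity of $D_\infty$ is precisely the substantive fact that would need an independent proof (you cannot assume it, since $D_\infty \cong \mathbb{B}^n$ would make $D_\infty$ homogeneous). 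The paper's actual device is the theorem of Coupet--Pinchuk on weighted-homogeneous rigid domains: after writing $\mathbb{B}^n$ in its unbounded Siegel representation $\Sigma$, a biholomorphism $\theta : D_\infty \to \Sigma$ extends biholomorphically past a finite boundary point $(\iota\alpha, 0') \in \partial D_\infty$, and preservation of the Levi form under this local boundary diffeomorphism forces $\partial D_\infty$ to be strongly pseudoconvex at that point; since $Q$ is homogeneous this means $\deg Q = 2$, i.e.\ $2m = 2$. This boundary-extension input is the ingredient your proposal is missing.
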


\begin{thm} \label{T5}
Let $D \subset \mathbb C^n$ be a bounded convex domain with $C^{\infty}$-smooth boundary. Then $\partial D$ is strongly pseudoconvex if $h_D(z) \rightarrow 0$ as $z \rightarrow \partial D$.
\end{thm}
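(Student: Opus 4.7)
The plan is to prove the contrapositive: suppose some boundary point $p^0 \in \partial D$ is not strongly pseudoconvex (so the Levi form at $p^0$ has a degenerate eigenvalue), and produce a sequence $\{p^j\} \subset D$ with $p^j \to p^0$ along which $h_D(p^j)$ does not tend to zero. Take any $\{p^j\}$ approaching $p^0$ along the inner normal. The scaling construction for smoothly bounded convex domains yields biholomorphic copies $D^j$ of $D$ converging in the local Hausdorff topology to a convex limit $D_\infty$, while the scaled images of $p^j$ tend to the base point $(-1, 0') \in D_\infty$. By Theorem \ref{T0}(iii),
\[
  \lim_{j \to \infty} h_D(p^j) \;=\; h_{D_\infty}\big((-1, 0')\big),
\]
so the task reduces to proving the strict positivity $h_{D_\infty}\big((-1, 0')\big) > 0$.

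Suppose, for contradiction, that $h_{D_\infty}\big((-1, 0')\big) = 0$. Then for each $k \in \mathbb{N}$ there exists a holomorphic embedding $f_k : \mathbb{B}^n \hookrightarrow D_\infty$ with $B_{D_\infty}\big((-1, 0'), k\big) \subset f_k(\mathbb{B}^n)$. Bergman completeness of the convex domain $D_\infty$ (already invoked in the proof of Theorem \ref{T0}(iii)) forces its Bergman balls around $(-1, 0')$ to exhaust $D_\infty$, so $\bigcup_k f_k(\mathbb{B}^n) = D_\infty$. After pre-composing each $f_k$ with a suitable element of $\Aut(\mathbb{B}^n)$, one may arrange $f_k(0) = (-1, 0')$. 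A normal-families argument — analogous to the one carried out in the proof of Theorem \ref{T4} — using tautness of the convex domain $D_\infty$, Hurwitz's theorem applied to the Jacobian determinants $\det f_k'$ (nowhere vanishing since each $f_k$ is an embedding), and simple-connectedness of the convex codomain, produces a biholomorphism $f : \mathbb{B}^n \to D_\infty$. Hence $D_\infty \cong \mathbb{B}^n$.

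This is incompatible with the weak pseudoconvexity of $p^0$. The scaling limit $D_\infty$ inherits at the base boundary point $0 \in \partial D_\infty$ a complex-tangential flatness of D'Angelo $1$-type strictly greater than $2$ (finite or infinite, depending on whether $p^0$ is of finite type), a feature preserved by the anisotropic scaling from the degenerate Levi direction at $p^0$. But every boundary point of $\mathbb{B}^n$ is of $1$-type exactly $2$, so no biholomorphism $D_\infty \cong \mathbb{B}^n$ can exist. The resulting contradiction establishes $h_{D_\infty}\big((-1, 0')\big) > 0$ and completes the contrapositive argument.

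The main difficulty is the second paragraph — rigorously promoting the normal-families subsequential limit of the embeddings $f_k$ to a biholomorphism $\mathbb{B}^n \to D_\infty$. This rests on (i) Bergman completeness and tautness of the convex scaling limit $D_\infty$, needed so its Bergman balls exhaust the domain and so the sequence $\{f_k\}$ fixing $f_k(0) = (-1, 0')$ is normal; (ii) a careful control of the preimages $w_k := f_k^{-1}(q)$ of a fixed $q \in D_\infty$ (for instance via Kobayashi or Bergman distance comparisons between $f_k(\mathbb{B}^n)$ and $D_\infty$) to force $\{w_k\}$ into a compact subset of $\mathbb{B}^n$ and thereby deliver surjectivity of the limit $f$; and (iii) a Hurwitz-type injectivity step, which together with the simple-connectedness of $D_\infty$ upgrades the locally biholomorphic surjection $f$ to a genuine biholomorphism.
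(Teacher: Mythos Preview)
Your overall strategy matches the paper's: approach $p^0$ normally, scale, use Theorem~\ref{T0}(iii) to identify $\lim h_D(p^j)$ with $h_{D_\infty}(q^0)$, and argue that $h_{D_\infty}(q^0)=0$ forces $D_\infty\cong\mathbb{B}^n$, which must be ruled out when $p^0$ is weakly pseudoconvex. The step $h_{D_\infty}(q^0)=0\Rightarrow D_\infty\cong\mathbb{B}^n$ is already packaged in the paper as Theorem~\ref{T6}(i) (via Lemma~\ref{L1}), so your normal-families paragraph is redundant rather than problematic.

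The genuine gap is your final paragraph. The assertion that a $1$-type mismatch at a boundary point obstructs a biholomorphism $D_\infty\cong\mathbb{B}^n$ is not self-evident: D'Angelo type is a local boundary invariant only under biholomorphisms that extend smoothly across the boundary point in question, and here $D_\infty$ is unbounded, so the map $\mathbb{B}^n\to D_\infty$ could send the relevant boundary sphere point to infinity. Two separate issues arise. In the infinite-type case, Zimmer's scaling limit $D_\infty$ is merely a convex domain containing no complex affine line; it need not have smooth boundary near any designated base point, and speaking of its ``type at $0$'' is not justified. The paper sidesteps boundary regularity entirely here by invoking an \emph{intrinsic} invariant: $(D_\infty,d^b_{D_\infty})$ fails to be Gromov hyperbolic (Zimmer), whereas $(\mathbb{B}^n,d^b_{\mathbb{B}^n})$ is, and biholomorphisms are Bergman isometries. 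In the finite-type case $D_\infty=\{2\Re z_1+P(z',\bar z')<0\}$ with $\deg P=2m'>2$, the paper does compare boundary geometry, but only after invoking the Coupet--Pinchuk boundary extension theorem for weighted-homogeneous rigid domains to produce a biholomorphic extension of $\theta:D_\infty\to\Sigma$ across a finite boundary point (carefully arranging that the cluster set avoids the point at infinity on $\partial\Sigma$); only then is the Levi form comparison legitimate. Your proposal asserts the conclusion of this machinery without supplying it.
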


Just as the Bergman metric admits a localisation near local holomorphic peak points of a bounded pseudoconvex domain, the same holds even for $ h_D $.

\begin{thm}\label{n10}
Let $D \subset \mathbb{C}^n $ be a bounded pseudoconvex domain and $z^{0} \in \partial{D}$ be a local holomorphic peak point of $ D $.  For a sufficiently small neighbourhood $U$ of $z^{0}$, 
\begin{enumerate}
 \item [(i)] $ \lim_{z \rightarrow z^0} h_D (z) = 0 $ if and only if $ \lim_{z \rightarrow z^0} h_{U \cap D} (z) = 0 $.
 \item [(ii)] If either $ \lim_{z \rightarrow z^0} h_{U \cap D} (z) $ or $ \lim_{z \rightarrow z^0} h_{D} (z) $ is bounded away from zero, then 
 \[
 \lim_{z \rightarrow z^0} \frac{h_{U \cap D} (z)}{h_{ D} (z)} = 1. 
 \]
\end{enumerate}

\end{thm}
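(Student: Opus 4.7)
The plan is to establish the two-sided asymptotic comparison
$$h_D(z)\bigl(1 - o(1)\bigr) \;\le\; h_{U\cap D}(z) \;\le\; h_D(z)\bigl(1 + o(1)\bigr) \quad \text{as } z \to z^0,$$
from which (i) follows immediately (either limit being zero forces the other) and (ii) follows by sending the errors to zero, with the non-degeneracy hypothesis keeping the multiplicative factors bounded. The key input will be the standard localisation of the Bergman distance at local holomorphic peak points (Diederich--Ohsawa, Herbort): for every $\epsilon > 0$ there is a neighbourhood $V(\epsilon) \subset U$ of $z^0$ such that $\lvert d^b_D(z,w) - d^b_{U\cap D}(z,w)\rvert < \epsilon$ for all $z,w \in V(\epsilon) \cap D$, and $d^b_D(z, D \setminus U) \to \infty$ as $z \to z^0$. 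In particular, Bergman balls $B_D(z,r)$ of bounded radius based at $z$ close to $z^0$ lie inside $U \cap D$ and have vanishing Euclidean diameter.

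For the upper estimate $h_D(z) \le h_{U\cap D}(z)(1+o(1))$, I would take $f : \mathbb{B}^n \hookrightarrow U \cap D$ with $B_{U\cap D}(z,r) \subset f(\mathbb{B}^n)$ and regard it as an embedding into $D$. For $z$ close to $z^0$, any $w \in B_D(z, r-\epsilon)$ lies in $U \cap D$ and satisfies $d^b_{U\cap D}(z,w) \le d^b_D(z,w) + \epsilon \le r$, hence $w \in B_{U\cap D}(z,r) \subset f(\mathbb{B}^n)$. This gives $h_D(z) \le 1/(r-\epsilon)$, and sending $\epsilon \to 0$ together with optimising over admissible $r$ yields the inequality.

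The reverse estimate is the heart of the argument. Starting from $f : \mathbb{B}^n \hookrightarrow D$ with $B_D(z,r) \subset f(\mathbb{B}^n)$, a Möbius automorphism of $\mathbb{B}^n$ arranges $f(0) = z$. The family of such $f$'s parametrised by $z \to z^0$ is normal by Montel, and any subsequential limit must be the constant $z^0$ by the maximum principle; hence for each fixed $s \in (0, 1)$, $f(\overline{s \mathbb{B}^n}) \subset U$ once $z$ is sufficiently close to $z^0$. Setting $s_0 = s_0(z) := \sup\{s : f(s \mathbb{B}^n) \subset U \cap D\}$ one obtains $s_0 \to 1$, and $g(\zeta) := f(s_0 \zeta) : \mathbb{B}^n \to U \cap D$ is a holomorphic embedding. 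By localisation, $B_{U\cap D}(z, r-2\epsilon) \subset B_D(z, r-\epsilon) \subset f(\mathbb{B}^n)$, so $\zeta := f^{-1}(w)$ is defined for every $w$ in the smaller ball; the crucial remaining step is $|\zeta| \le s_0$. I would prove this by applying the Schwarz lemma to $f^{-1} : f(\mathbb{B}^n) \to \mathbb{B}^n$ (noting $f^{-1}(z) = 0$) on a Euclidean neighbourhood of $z$ contained in $f(\mathbb{B}^n)$, combined with the vanishing Euclidean diameter of $B_{U\cap D}(z, r-2\epsilon)$: this forces $|\zeta| \le \zeta_\star < 1$ uniformly, and since $s_0 \to 1$ eventually $\zeta_\star < s_0$. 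Hence $B_{U\cap D}(z, r-2\epsilon) \subset g(\mathbb{B}^n)$, giving $h_{U\cap D}(z) \le 1/(r-2\epsilon)$.

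The main obstacle will be this preimage estimate: in several complex variables, univalent maps do not obey a clean Koebe-type distortion theorem, so controlling $|f^{-1}(w)|$ in terms of $|w-z|$ requires care. One handles it by combining Cauchy/Schwarz estimates for $f^{-1}$ with quantitative control of how much of a Euclidean ball around $z$ sits inside $f(\mathbb{B}^n)$, which in turn comes from $B_D(z,r) \subset f(\mathbb{B}^n)$ together with the shape of $D$-Bergman balls near the peak point $z^0$.
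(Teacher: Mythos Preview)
Your outline follows the same architecture as the paper: localise the Bergman metric near the peak point, compare the balls $B_D$ and $B_{U\cap D}$, and for the harder direction restrict an embedding $f:\mathbb{B}^n\to D$ to a sub-ball so that its image lands in $U\cap D$. The differences are in the technical execution.

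First, the paper works with the \emph{infinitesimal} (multiplicative) localisation $(1-\epsilon)\,b_{U\cap D}\le b_D\le(1+\epsilon)\,b_{U\cap D}$ on $U_1\cap D$ and integrates it along paths to obtain the ball inclusions $B_D\big(z,(1-\epsilon)R\big)\subset B_{U\cap D}(z,R)$ and $B_{U\cap D}\big(z,R/(1+\epsilon)\big)\subset B_D(z,R)$. The additive distance localisation you invoke is a consequence of this, but one has to be careful: the integrated comparison only holds for pairs $z,w$ whose (near-)minimising paths stay inside $U_1\cap D$, which is exactly why the paper first proves (via Hahn--Lu and the peak function $g$) that $B_{U\cap D}(z,r)\subset U_1\cap D$ for $z$ close to $z^0$.

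Second, and more importantly, the paper handles the ``preimage'' step in the reverse inequality by a different and much simpler route than your Schwarz-lemma proposal. Given $f:\mathbb{B}^n\to D$ with $f(0)=z$ and $B_D(z,R)\subset f(\mathbb{B}^n)$, the paper first notes that $\overline{B_D(z,R-\epsilon)}$ is a compact subset of $f(\mathbb{B}^n)$, hence lies in $f\big(\mathbb{B}^n(0,r)\big)$ for some $r<1$; \emph{then} it applies Rudin's attraction lemma (Lemma~15.2.2 in his ball book) to get $f\big(\mathbb{B}^n(0,r)\big)\subset U\cap D$ once $z$ is close enough to $z^0$. Setting $\tilde f(\zeta)=f(r\zeta)$ finishes the job, since $B_{U\cap D}\big(z,(R-\epsilon)/(1+\epsilon)\big)\subset B_D(z,R-\epsilon)\subset \tilde f(\mathbb{B}^n)$. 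Your normal-family argument is equivalent to Rudin's lemma, but you apply attraction \emph{first} (to produce $s_0$) and then try to force $f^{-1}(w)\in s_0\mathbb{B}^n$ via Schwarz/Koebe-type estimates---which, as you rightly note, is delicate in several variables. Reversing the order, as the paper does, eliminates this difficulty: the compactness of the inner Bergman ball already pins down a suitable $r<1$ before any attraction is invoked, and no quantitative distortion estimate for $f^{-1}$ is needed.
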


\medskip

\noindent \textit{Acknowledgements:} The authors would like to thank Kaushal Verma for his advice and constant encouragement. Some of the material presented here has benefitted from conversations the second author had with G.P. Balakumar and Diganta Borah - especially the subsection \ref{SS1}. Special thanks go to S. Baskar and Debanjana Mitra for helpful comments pertaining the theory of ODEs.

\section{Preliminaries}

\subsection{Some definitions}

Let $ \Omega $ be a domain in $ \mathbb{C}^n $, and consider the space $ A^2(\Omega ) $ of all holomorphic functions on $ \Omega $ that are square integrable with respect to the Lebesgue measure on $ \mathbb{C}^n $. If $ A^2(\Omega) \neq \{0\} $, then it is a non-trivial separable Hilbert space, equipped with the $ L^2$-inner product. If $ \{ \phi^j: j \geq 1 \} $ is an orthonormal basis of the Hilbert space $ A^2(\Omega) $, then the Bergman kernel of $ \Omega $ is the function $ K_{\Omega} : \Omega \times \Omega \rightarrow \mathbb{C} $ given by
\begin{equation*}
 K_{\Omega}(z,w) = \sum_{j \geq 1} \phi_j(z) \overline{ \phi_j(w)},
\end{equation*}
where the series on the right hand side converges uniformly on compact sets of $ \Omega \times \Omega $. It is well known that $ K_{\Omega} $ does not depend on the choice of an orthonormal basis for $ A^2(\Omega) $.

If $ K_{\Omega} (z,z) > 0 $ for all $ z \in \Omega$, then 
\[
  \sum_{\mu, \nu=1}^n g^{\Omega}_{\mu \nu}(z)dz_{\mu} d \overline{z}_{\nu}, 
 \]
defines a smooth $ (1,1) $-Hermitian form that is positive semi-definite, where
\begin{equation} \label{E30}
  g_{\mu \nu}^{\Omega}(z) := \frac{\partial^{2}}{\partial z_{\mu}\partial\bar{z_{\nu}}}\log K_{\Omega}(z,z).
 \end{equation} 
 In this case, the associated infinitesimal Bergman (pseudo-)metric at a point $ z \in \Omega $ and a holomorphic tangent vector $ \xi $ at $ z $ is defined as
\[
 b_{\Omega}(z, \xi) = \left( \sum_{\mu, \nu=1}^n g^{\Omega}_{\mu \nu}(z) \xi_{\mu} \overline{\xi}_{\nu} \right)^{1/2}.
\] 
It turns out that $ b_{\Omega} $ is
positive definite if for every $ \xi \in \mathbb{C}^n \setminus \{0\} $, there exists $ f \in A^2(\Omega) $ with the property that that $ df(z) \xi \neq 0 $. S. Bergman showed that the above condition is satisfied for all bounded domains in $ \mathbb{C}^n $, i.e., $ b_{\Omega} $ is a metric for bounded domains in $ \mathbb{C}^n $.  

The Bergman (pseudo-)distance between $ z, w \in \Omega $ is defined as 
\begin{equation*}
 d_{\Omega}^b (z,w)= \inf \int_0^1 b_{\Omega} \big( \gamma(t), \dot{\gamma}(t) \big)\; dt,
\end{equation*}
where the infimum is taken over all piecewise $ C^1$-curves in $ \Omega $ joining $ z$ and $w $.
 
 %$ K_X $ non vanishing along the diagonal and $ (g_{ij}) $ positive definite $ \implies $ $ d_X $ distance.

\subsection{Some notation:}

\begin{itemize}
 \item $ \lambda $ denotes the standard Lebesgue measure on $ \mathbb{C}^n $. 
 
 \item $ B^n(z,r) $ is the Euclidean ball centered at point $ z \in \mathbb{C}^n $ and radius $ r > 0 $.
 
 \item For a open set $ \Omega \subset \mathbb{C}^n $ and $ z \in \Omega $, 
 \[ d( z, \Omega) := \inf \left\{ |w-z| : w \in \mathbb{C}^n \setminus \Omega \right\},
 \]
where $ |\cdot| $ denotes the standard Euclidean norm on $ \mathbb{C}^n $. 
 
 \item $ \Delta^2 $ is the unit bidisc $ \{ z \in \mathbb{C}^2: |z_1| < 1, |z_2|<1 \} $.
\end{itemize}

%%%%%%%%%%%%%%%%%%%%%%%%%%%%%%%%%%%%%

\subsection{Some remarks on the Bergman-Fridman invariant:} In this subsection, we gather some basic properties of the Bergman-Fridman invariant function. 

\begin{lem} \label{L1}
  Let $D\subseteq\clcn$ be a taut domain and $X\subseteq\clcn$ be another  domain. Suppose that there exist two relatively compact sets $K_{1}\subset D$ and $K_{2}\subset X$ and mappings $F^{k} :D\to X$ satisfying the following conditions:
  \begin{enumerate}
   \item [(i)] $F^{k} : D\to F^{k}(D)$ are biholomorphisms for each $k \geq 1$,
   \item [(ii)] for any $k\geq 1$, there exists a point $z^{k}\in K_{1}$ such that $F^{k}(z^{k})\in K_{2}$, and
   \item [(iii)] for any compact set $L\subset X$, there exists a number $s= s(L)$ such that $F^{s}(D)\supset L$.
  \end{enumerate}
 If $ K_X $ is non-vanishing along the diagonal, then $X$ is biholomorphic to $D$.
  \end{lem}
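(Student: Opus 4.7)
The plan is to construct a biholomorphism $G : X \to D$ as the locally uniform limit of the inverses $G^k := (F^k)^{-1}: F^k(D) \to D$, and to verify its bijectivity using the Bergman kernel hypothesis together with the tautness of $D$.

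First, by condition (iii) and a Cantor diagonal argument using a compact exhaustion $\{L_m\}$ of $X$, the maps $G^{k_m}$ are eventually defined on every compact of $X$; tautness of $D$ then produces a subsequence that either converges locally uniformly on $X$ to a holomorphic $G : X \to D$ or is compactly divergent. Compact divergence is excluded by condition (ii): after passing to subsequences, $z^{k_m} \to z^* \in \overline{K_1} \subset D$ and $F^{k_m}(z^{k_m}) \to w^* \in \overline{K_2} \subset X$, and the identity $G^{k_m}(F^{k_m}(z^{k_m})) = z^{k_m}$ keeps the relevant values inside the compact $K_1$. Hence $G : X \to D$ is holomorphic with $G(w^*) = z^*$.

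To show $G$ is injective, I would apply the biholomorphic transformation of the Bergman kernel under $F^{k_m}: D \to F^{k_m}(D)$, together with the monotonicity $K_{F^{k_m}(D)}(w,w) \geq K_X(w,w)$ available because $F^{k_m}(D) \subset X$, to obtain
\[
\left| \det (G^{k_m})'(F^{k_m}(z^{k_m})) \right|^2 \;\geq\; \frac{K_X(F^{k_m}(z^{k_m}), F^{k_m}(z^{k_m}))}{K_D(z^{k_m}, z^{k_m})}.
\]
Since $K_X(w^*, w^*) > 0$ by hypothesis and both kernels are continuous, the right-hand side stays bounded below by a positive constant for $m$ large. Passing to the limit (Cauchy's integral formula converts locally uniform convergence of $G^{k_m}$ into convergence of its derivatives) yields $|\det G'(w^*)| > 0$. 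The standard Hurwitz-type theorem in several complex variables, applied to the sequence of injective maps $\{G^{k_m}\}$, then forces $G$ itself to be injective, so $G(X) \subset D$ is open.

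For surjectivity I would examine the holomorphic self-maps $\phi_m := G \circ F^{k_m} : D \to D$. Tautness of $D$, combined with $\phi_m(z^{k_m}) = G(F^{k_m}(z^{k_m})) \to z^*$, yields a subsequential limit $H : D \to D$; a chain-rule computation at $z^{k_m}$ gives $H(z^*) = z^*$ and $H'(z^*) = G'(w^*)[G'(w^*)]^{-1} = I$. Cartan's uniqueness theorem for taut domains then identifies $H$ with $\mathrm{id}_D$, so $\phi_m \to \mathrm{id}_D$ locally uniformly. For any compact $K \subset D$, a multivariate Rouch\'{e}/inverse function argument applied to the perturbation $\phi_m - \mathrm{id}$ on a slight enlargement of $K$ produces, for $m$ large, an actual preimage of each point of $K$ under $\phi_m$, giving $K \subset \phi_m(D) = G(F^{k_m}(D)) \subset G(X)$; exhausting $D$ concludes $G(X) = D$. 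The hardest step is this very last one: turning the functional convergence $\phi_m \to \mathrm{id}_D$ into the set-theoretic containment $G(X) \supset K$ requires a careful multivariate open mapping argument, and is the only ingredient beyond standard normal family and Bergman kernel manipulations.
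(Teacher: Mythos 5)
Your proof is correct, and while the first half matches the paper's strategy, your surjectivity argument is genuinely different and worth contrasting.

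For the construction and injectivity of $G$ (the paper's $\phi$), you follow the same path as the paper: extract a locally uniform limit of $G^k=(F^k)^{-1}$ using tautness and a compact exhaustion, rule out compact divergence via condition (ii), and then run the Bergman kernel transformation/monotonicity argument to obtain $\left|\det G'(w^*)\right|>0$. The paper then applies Hurwitz to the scalar functions $\det(G^k)'$ to get a nowhere-vanishing Jacobian, and separately invokes Proposition 5 of Chapter 5 of Narasimhan for global injectivity; you compress these two steps into a single citation of the several-variables Hurwitz theorem for injective maps (limit of injective holomorphic maps is injective unless $\det G'\equiv 0$), which is a standard result and equivalent in content.

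For surjectivity the two arguments diverge. The paper extracts a convergent (sub)sequence of the $F^k$ themselves, proves $\det F'(z^0)\neq 0$ by a second Bergman kernel computation (this time via an auxiliary open set $L\supset K_2$), invokes Hurwitz once more, and finally passes to the limit in $\phi^k\circ F^k=\mathrm{id}$. You avoid extracting a limit of the $F^k$ entirely: you form $\phi_m = G\circ F^{k_m}: D\to D$, compute via the chain rule and the identity $(G^{k_m})'\bigl(F^{k_m}(z^{k_m})\bigr)\,(F^{k_m})'(z^{k_m})=I$ that any subsequential limit $H$ satisfies $H(z^*)=z^*$, $H'(z^*)=I$, and invoke Cartan's uniqueness theorem (valid here because a taut domain is Kobayashi hyperbolic) to conclude $\phi_m\to\mathrm{id}_D$; a several-variables Rouch\'e argument then yields $K\subset\phi_m(D)\subset G(X)$ for any compact $K\subset D$. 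What this buys: you dispense with the second Bergman kernel computation, and you sidestep a somewhat delicate point in the paper, namely the justification that a subsequence of $\{F^k\}$ converges locally uniformly on $D$ and that the limit maps into $X$ rather than merely $\overline X$. The trade-off is that you shift the burden to two auxiliary classical results (Cartan--Carath\'eodory--Kaup--Wu uniqueness on hyperbolic manifolds, and the stability-of-zeros/Rouch\'e principle in $\mathbb{C}^n$); the latter is in fact essentially the same Narasimhan Proposition 5 the paper uses for injectivity. Both arguments are sound; yours is arguably cleaner once those citations are in hand.

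One small remark that applies to both proofs: condition (iii) guarantees only one index $s(L)$ per compact $L$, whereas the proof needs a single subsequence $\{k_m\}$ along which $F^{k_m}(D)$ eventually contains each member of a compact exhaustion. This can be arranged by a standard diagonal/relabelling argument (or is vacuous when some single $F^s$ already maps onto $X$), but it is glossed over in the paper and in your sketch alike; it would be worth spelling out when writing the argument in full.
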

  
   \begin{proof}
   Define $\phi^{k}=(F^{k})^{-1}:F^{k}(D)\to D$  and set $ x^{k} = F^{k}(z^{k})$. Let $\{U_{n}\} $ be an increasing sequence of open sets that exhausts $X$, i.e., $U_{n}\subseteq U_{n + 1}$ for all $n \geq 1$ and $\bigcup U_{n} = X$. Using tautness of $D$, by passing to a subsequence, if necessary, we may assume that, for each $n \geq 1$, there exists an integer $N\in\mathbb{N}$ such that $\{\phi^{k}\}_{k\geq N}$ is defined on $U_{n}$ and converges uniformly on compact subsets to $\phi:U_{n}\to\bar{D}$. It is straightforward to check that $\phi$ is defined on all of $X$ and $\phi(X)\subset\bar{D}$. Further, by passing to a subsequence, (if necessary) we may assume that $\{x^{k}\}$ converges to a point of $x^{0}\in \overline{K}_2 $ and $\{z^{k}\}$ converges to $z^{0}\in \overline{K}_1$. Hence, $\phi(x^{0}) = z^{0}$ and the tautness of $ D $ forces that $ \phi (X) \subset D $.
   
   Next, observe that the $ K_D $ is non-vanishing along the diagonal. Indeed, the transformation rule for the Bergman kernel under the biholomorphisms $ F^{k}: D \rightarrow F^{k}(D) $ gives
   \begin{equation} \label{E1}
    K_D(z,z) = K_{F^k(D)} \left( F^k(z), F^k(z) \right) \left| \det \left(F^k \right)'(z) \right|^2,
   \end{equation}
for each $ k $ and for all $ z $ in $ D $. Here, $ \big( F^k \big)'(z) $ is the complex Jacobian matrix of $ F^k $ at the point $ z $. Further, since $ F^k(D) \subset X $, it follows that
\begin{equation} \label{E2}
 K_X \big( F^k(z), F^k(z) \big) \leq K_{F^k(D)} \big( F^k(z), F^k(z) \big)
 \end{equation}
for each $ k $. Combining (\ref{E1}) and (\ref{E2}) yields
\begin{equation*}
  K_D(z,z) \geq K_X \big( F^k(z), F^k(z) \big) \left| \det \left(F^k \right)'(z) \right|^2 > 0,
\end{equation*}
where the last inequality follows from the facts that $ K_X $ is given to be non-vanishing along the diagonal and each $ F^k $ is a biholomorphism. 
   
Again, since $\phi^{k}:F^{k}(D)\to D$ are biholomorphisms, it follows that
   \begin{equation} \label{E3}
   K_{F^k(D)} ( x^k, x^k ) = K_D (z^k,z^k) \left| \det \left( \phi^k\right)'(x^k) \right|^2,
   \end{equation}
for each $k$. As before, the inclusion $ F^k(D) \subset X $ gives
\begin{equation} \label{E4}
 K_X(x^k,x^k) \leq K_{F^k(D)} (x^k, x^k)
 \end{equation}
for each $ k $. Combining (\ref{E3}), (\ref{E4}) and letting $ k \rightarrow \infty $ gives
\begin{equation*}
 K_X(x^0, x^0) \leq K_D( z^0, z^0) \left| \det  \phi'(x^0) \right|^2.
\end{equation*}
As a consequence, 
\begin{equation} \label{E35}
\left| \det  \phi'(x^0) \right|^2 \geq \frac{K_X(x^0, x^0)}{K_D( z^0, z^0)} > 0.
\end{equation}
Applying the Hurwitz’s theorem to the sequence $ \{ \big(\phi^k\big)' \} $, we deduce that either the Jacobian determinant of $ \phi $ is never zero at any
point of $ X $ or it is identically zero on $ X $. In view of the observation (\ref{E35}), the latter case is ruled out. It follows that
\begin{equation*}
 \left| \det  \phi'(x) \right| > 0
\end{equation*}
for all $ x $ in $ X $ and $ \phi: X \rightarrow D $ is locally one-to-one. 

To prove the injectivity of $\phi$, suppose there are $ x^{1}, x^2 $ in $X$ such that $  \phi( x^{1}) = \phi(x^2) = z$. Consider disjoint neighbourhoods $ V_1 $, $ V_2 $ of $ x^1, x^2 $ respectively in $ X $. Since $\phi$ is locally one-to-one, it follows that $x^{1}$, $x^{2}$ are isolated points of $\phi^{-1}\{z\}$. In this setting, Proposition 5 from Chapter 5 of  \cite{RN} guarantees that  
\[
z \in \phi^k(V_1) \cap  \phi^k(V_2)
\]
for $k$ large. But this contradicts the fact that each $ \phi^k $ is injective, thereby establishing the injectivity of $\phi$. 

In particular, $ X $ can be identified with its biholomorph $ \phi(X) $, which is a subdomain of a taut domain $ D $. It follows that some subsequence of $\{F^{k}\}$ converges uniformly on compact sets of $ D $ and the resultant limit mapping $F:D \rightarrow \overline{X}$ in light of the assumption (iii), and $F(z^{0}) = x^{0}$. Let $L$ be an open subset of $ X $ such that $ K_{2} $ is relatively compact in $ L$. Then 
\begin{equation} \label{E5}
 K_D(z^k,z^k) \leq K_{\phi^k(L)} (z^k,z^k) 
\end{equation}
for $k$ large, and
\begin{equation} \label{E6}
K_{\phi^k(L)} (z^k,z^k) = K_{L}(x^k,x^k) \left| \det \left(F^k \right)'(z^k) \right|^2
\end{equation}
for all $k $ by the transformation rule for the Bergman kernel. The inequalities (\ref{E5}) and (\ref{E6}) together give 
\begin{equation*}
 K_D(z^k,z^k) \leq K_{L}(x^k,x^k) \left| \det \left(F^k \right)'(z^k) \right|^2
\end{equation*}
for all $k$ large. Letting $ k \rightarrow \infty $ in the above inequality yields
\begin{equation*}
 \frac{K_D(z^0,z^0)}{K_L(x^0,x^0)} \leq | \det F'(z^0)|^2,
 \end{equation*}
which, in turn, implies that
\begin{equation} \label{E7}
 | \det F'(z^0)| > 0, 
\end{equation}
since $ K_X (\cdot, \cdot) \leq K_L (\cdot, \cdot) $ and $ K_D $ and $ K_X $ both do not vanish along the diagonal.

An application of the Hurwitz's Theorem to the sequence $ \{ \big(F^k\big)' \} $ guarantees that the Jacobian determinant of $ F $ is never zero at any
point of $ D $, in view of (\ref{E7}). It follows that $ F(D) $ is open and $ F(D) \subset X $. Finally, note that for the mapping $\phi\circ F: D\to D$ and $z\in D$,  
  \begin{equation*}
  \phi\circ F(z) = \lim_{k\to\infty} \phi_{k}\circ F_{k}(z) = z,\end{equation*} 
  so that $ D \subset \phi(X)$. In other words, $\phi : X \rightarrow D $ is a biholomorphism and this completes the proof of the lemma.
 \end{proof} 
 
Note that if the Bergman pseudodistance $ d^b_X $ on a domain $ X \subset \mathbb{C}^n $ is actually a distance, then the topology generated by $ d^b_X $ coincides with the Euclidean toplogy of $ X $.  Now, an argument similar to the one in Proposition 3.3 of \cite{MV} that uses Lemma \ref{L1} and the above fact shows that 

\begin{thm} \label{T6}
Let $ X \subset \mathbb{C}^n $ be a domain such that $d^b_X $ is a distance.  %and $ K_X $ is non-vanishing along the diagonal. 

\begin{enumerate}
 \item [(i)] If $ h_{X}(x^{0}) = 0$ for some point $x^{0}$ of $ X$, then $X$ is biholomorphically equivalent to $ \mathbb{B}^{n}$ and $ h_X \equiv 0 $.
 
 \item [(ii)] If $ X $ is taut and $ h_X (x^0) > 0 $ for some point $x^{0}$ of $ X$, then there is a biholomorphic imbedding $ F: \mathbb{B}^n \rightarrow X $ with the property that $ B_X \big(x^0 , \frac{1}{ h_X(x_0)} \big) \subset F( \mathbb{B}^n ) $.  
 
 \item [(iii)] The function $ h_X( \cdot) $ is continuous on $ X $. 
\end{enumerate}

\end{thm}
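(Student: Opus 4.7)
The plan is to adapt the argument of Proposition~3.3 of \cite{MV} to the Bergman setting, replacing the Kobayashi input by Lemma~\ref{L1} and using that $d^b_X$, being a distance compatible with the Euclidean topology of $X$, is jointly continuous and hence bounded on every compact subset of $X$. Note in particular that $d^b_X$ being a distance already forces $K_X > 0$ on the diagonal, so the main hypothesis of Lemma~\ref{L1} is available.

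For (i), I pick $r_k \to \infty$ and biholomorphic imbeddings $\widetilde{F}^k : \mathbb{B}^n \to X$ with $B_X(x^0, r_k) \subset \widetilde{F}^k(\mathbb{B}^n)$, and precompose with suitable $\psi^k \in \Aut(\mathbb{B}^n)$ to arrange $F^k(0) = x^0$. Choosing $K_1 = \{0\}$, $K_2 = \{x^0\}$, $z^k = 0$, the first two hypotheses of Lemma~\ref{L1} hold trivially; the third follows because a compact $L \subset X$ is contained in some $B_X(x^0, R)$ by continuity of $d^b_X$, hence in $F^k(\mathbb{B}^n)$ whenever $r_k > R$. Lemma~\ref{L1} then produces $X \cong \mathbb{B}^n$. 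That $h_{\mathbb{B}^n} \equiv 0$ follows because the identity imbedding at $0$ places every $r > 0$ in $\mathcal{R}(0)$, and this vanishing propagates to every point of $\mathbb{B}^n$ via $\Aut(\mathbb{B}^n)$-homogeneity; biholomorphic invariance of $h$ then transfers the conclusion back to $X$.

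For (ii), set $R := 1/h_X(x^0) \in (0,\infty)$ and choose imbeddings $f^k : \mathbb{B}^n \to X$ with $f^k(0) = x^0$, $B_X(x^0, r_k) \subset f^k(\mathbb{B}^n)$, and $r_k \to R$. Tautness of $X$, combined with $f^k(0) = x^0$, precludes compact divergence, so along a subsequence $f^k \to F : \mathbb{B}^n \to X$ locally uniformly with $F(0) = x^0$. Fix $r < R$; for $k$ large, the inverse branches $g^k := (f^k)^{-1}$ are defined on $B_X(x^0, r)$, take values in $\mathbb{B}^n$, and satisfy $g^k(x^0) = 0$. A normal-families argument produces a subsequential limit $\Phi : B_X(x^0, r) \to \overline{\mathbb{B}^n}$; the maximum principle for the plurisubharmonic function $\|\Phi\|^2$, together with $\Phi(x^0) = 0$, forces $\Phi(B_X(x^0, r)) \subset \mathbb{B}^n$. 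Taking the limit in $f^k \circ g^k = \mathrm{id}$ yields $F \circ \Phi = \mathrm{id}$ on $B_X(x^0, r)$, so $\det F'$ is nonzero on the open set $\Phi(B_X(x^0, r))$. Applying Hurwitz's theorem to $\{\det(f^k)'\}$ rules out $\det F' \equiv 0$, making $F$ a local biholomorphism; the isolated-preimage argument used in the injectivity step of Lemma~\ref{L1} (via Proposition~5, Chapter~5 of \cite{RN}) then upgrades local to global injectivity. Finally, $F \circ \Phi = \mathrm{id}$ gives $F(\mathbb{B}^n) \supset B_X(x^0, r)$, and letting $r \uparrow R$ produces the required imbedding. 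The main technical obstacle is exactly this step: promoting the pointwise limit $F$ of univalent maps to a genuine biholomorphic imbedding whose image captures precisely the Bergman ball of radius $R$, not some strictly smaller set.

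For (iii), continuity of $d^b_X$ in the Euclidean topology ensures that $d^b_X(x, x_k) \to 0$ whenever $x_k \to x$. For upper semicontinuity, any $r \in \mathcal{R}(x)$ realized by $f : \mathbb{B}^n \to X$ satisfies $B_X(x_k, r - d^b_X(x, x_k)) \subset B_X(x, r) \subset f(\mathbb{B}^n)$, giving $h_X(x_k) \leq 1/(r - d^b_X(x, x_k))$; letting $k \to \infty$ and then taking the infimum over $\mathcal{R}(x)$ yields $\limsup_k h_X(x_k) \leq h_X(x)$. For lower semicontinuity, suppose along a subsequence $h_X(x_{k_j}) \to L < h_X(x)$; witnesses $f_j, r_j$ with $1/r_j \to L$ furnish $B_X(x, r_j - d^b_X(x, x_{k_j})) \subset f_j(\mathbb{B}^n)$ for large $j$, so $h_X(x) \leq 1/(r_j - d^b_X(x, x_{k_j})) \to L$, a contradiction.
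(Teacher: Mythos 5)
Your proposal is correct and follows essentially the same route the paper intends: the paper explicitly defers the proof to ``an argument similar to the one in Proposition~3.3 of \cite{MV} that uses Lemma~\ref{L1}'', and you carry out precisely that adaptation, with the correct choice of $K_1=\{0\}$, $K_2=\{x^0\}$ in Lemma~\ref{L1} for part (i), a standard taut normal-families plus Hurwitz argument for part (ii), and a triangle-inequality semicontinuity argument for part (iii). The details you fill in (the plurisubharmonic maximum-principle step to place $\Phi$ into the open ball, the care in taking the limit $r\uparrow R$ while keeping $F$ fixed, and noting that $d^b_X$ being a distance already forces $K_X>0$ on the diagonal) are all sound.
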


\subsection{Localisation Result: } 

In this subsection, we present a proof of Theorem \ref{n10}. It will be useful to recall the Hahn-Lu comparison theorem (\cite{Hahn1},\cite{Hahn2}, \cite{Hahn3}, \cite{Lu}): if $ D $ is a bounded domain in $ \mathbb{C}^n $, then the Carath\'{e}odory distance $ d_D^c $ and the Bergman distance $ d_D^b $ satisfy the inequality $ d_D^c \leq d_D^b $. 

%In keeping with this nomenclature, we will write hD instead of hD for clarity of exposition. 

\medskip

\noindent \textit{Proof of Theorem \ref{n10}:} 
Let $ g $ be a local holomorphic peak function at $ z^0 $ defined in a neighbourhood $ U $ of $ z^0 $. Fix $ \epsilon > 0$. Then the localization property of the Bergman metric (\cite{Kim-Yu}) guarantees the existence of a neighbourhood $ U_{1} \subset U $ of $z_{0} $ such that 
\begin{equation} \label{E9}
(1- \epsilon) b_{ U \cap D}(z, v) \leq b_{D}(z, v) \leq (1 + \epsilon) b_{ U \cap D }(z, v)
\end{equation}
for $ z $ in $ U_1 \cap D $ and $v$ a tangent vector at $ z $.

Then for every $ r > 0 $, there is a neighbourhood $ U_2 \subset U_1 $ of $ z^0 $ such that $ B_{U \cap D} (z, r) \subset U_1 \cap D $ for $ z $ in $ U_2 \cap D $. To establish this claim, firstly note that 
\begin{equation} \label{E8}
 d^c_{\Delta} \big( g(z), g(w) \big) \leq d^c_{U \cap D} (z,w) \leq 
 d^b_{U \cap D} (z,w),
\end{equation}
for all $ z, w $ in $ U \cap D $. Since $ g(z^0) = 1 $ and $ |g| < 1 $ on $ U \cap D \setminus \overline{U_1 \cap D} $, it is immediate that 
\begin{equation*}
 d^c_{\Delta} \big( g(z), g(w) \big) \rightarrow + \infty 
\end{equation*}
as $ z \rightarrow z^0 $ for each $ w \in U \cap D \setminus \overline{U_1 \cap D} $. Combining this observation with (\ref{E8}) gives that
\begin{equation*}
 d^b_{U \cap D} \big(z, U \cap D \setminus \overline{U_1 \cap D} \big) \rightarrow + \infty
\end{equation*}
as $ z \rightarrow z^0 $, thereby yielding the desired claim.

Now for a given $ R > 0 $, let $ U_2 $ be a neighbourhood of $ z^0 $ with the property that $ B_{ U \cap D} (z, R) \subset U_1 \cap D $ for $  z \in  U_2 \cap D $. Choose a point $ w \in D $ in the complement of the closure
of $ B_{U \cap D} (z, R) $ and let $ \gamma : [0, 1] \rightarrow D $ be a differentiable path joining $ z $ and $ w $, i.e. $  \gamma(0) = z $ and $ \gamma (1) = w $. Then there is a $ t' \in (0, 1) $ such that $ \gamma \big([0, t') \big) \subset B_{U \cap D} (z, R) $ and $ \gamma (t' ) \in \partial B_{U \cap D} (z, R) $. Using the localization property (\ref{E9}), it follows that 
\begin{eqnarray*}
  R 
  =
  d^b_{U \cap D} \big( z, \gamma(t') \big) 
  \leq 
  \int_0^{t'} b_{U \cap D} \big( \gamma(t), \dot{\gamma}(t) \big) \; dt 
  \leq 
  \frac{1}{(1 - \epsilon)}\int_0^{t'} b_D \big( \gamma(t), \dot{\gamma}(t) \big) \; dt \\
 \leq 
 \frac{1}{(1 - \epsilon)} \int_0^{1} b_D \big( \gamma(t), \dot{\gamma}(t) \big)\; dt, 
\end{eqnarray*}
which in turn implies that $ (1 - \epsilon) R \leq d_D^b (z,w) $ or equivalently that 
\begin{equation} \label{E11}
 B_D \big( z, (1- \epsilon) R \big) \subset B_{U \cap D} (z, R),
\end{equation}
whenever $ z \in U_2 \cap D $. 

Now, let $ f : \mathbb{B}^n \rightarrow U \cap D $ be a biholomorphic imbedding with $ B_{U \cap D} (z, R) \subset  f(\mathbb{B}^n) $. Following (\ref{E11}), it is straightforward that 
\begin{equation*}
 B_D \big( z, (1- \epsilon) R \big) \subset B_{U \cap D} (z, R) \subset f( \mathbb{B}^n ),
\end{equation*}
so that 
\begin{equation} \label{E12}
h_D (z) \leq 1/ \big((1-\epsilon)R\big) 
\end{equation}
%$ h_D (z) \leq 1/ \big((1-\epsilon)R\big) $ if $ z \in U_2 \cap D $. %This exactly means that 
%\begin{equation} \label{E12}
%(1 - \epsilon) h_D(z) \leq  h_{U \cap D} (z),
%\end{equation}
whenever $ z \in U_2 \cap D $.

Next, observe the following: For a given $ R > 0 $, fix neighbourhoods $ U_2 \subset U_1 \subset U $ of $ z^0 $ as above. Then $ B_{ U \cap D }\big(z, R/(1 + \epsilon) \big)\subset B_{D}(z, R) $ whenever $ z \in U_2 \cap D $. To see this, pick $ w \in 
B_{ U \cap D }\big(z, R/(1 + \epsilon) \big) $ and consider a differentiable path $ \sigma : [0,1] \rightarrow U \cap D $ with $ \sigma(0) = z, \sigma(1) = w $, such that 
\begin{equation*}
 \int_0^1 b_{U \cap D}  \big( \sigma(t), \dot{\sigma} (t)\big)\; dt < \frac{R}{1+ \epsilon}.
\end{equation*}
Firstly, note that the trace of $ \sigma $ is contained in $ B_{ U \cap D }\big(z, R/(1 + \epsilon) \big) $. For otherwise, there is a $ t_0 \in (0, 1) $ such that $ \sigma \big([0, t_0 ) \big) \subset B_{ U \cap D }\big(z, R/(1 + \epsilon) \big) $ and $ \sigma (t_0 ) \in \partial B_{U \cap D} \big(z, R/( 1 + \epsilon) \big) $, 
and hence
\begin{equation*}
\frac{R}{1+ \epsilon} = d^b_{U \cap D} \big( z, \sigma(t_0) \big) \leq \int_0^{t_0} b_{U \cap D}  \big( \sigma(t), \dot{\sigma} (t)\big)\; dt \leq \int_0^1 b_{U \cap D}  \big( \sigma(t), \dot{\sigma} (t)\big)\; dt < \frac{R}{1+ \epsilon},
\end{equation*}
which is impossible. 

Hence, for $ w \in B_{ U \cap D }\big(z, R/(1 + \epsilon) \big) $ and $ \sigma : [0,1] \rightarrow U \cap D $ a differentiable path with $ \sigma(0) = z, \sigma(1) = w $ such that 
\begin{equation*}
 \int_0^1 b_{U \cap D}  \big( \sigma(t), \dot{\sigma} (t)\big)\; dt < \frac{R}{1+ \epsilon},
\end{equation*}
the image $ \sigma[0,1] $ is contained in $ B_{ U \cap D }\big(z, R/(1 + \epsilon) \big) $ as observed above. If $ z \in U_2 \cap D $, then $ B_{ U \cap D }\big(z, R/(1 + \epsilon) \big) \subset U_1 \cap D $, and the localisation statement (\ref{E9}) applies:
\begin{equation*}
 d_D^b(z, w) \leq \int_0^1 b_{D}  \big( \sigma(t), \dot{\sigma} (t)\big)\; dt \leq (1 + \epsilon) \int_0^1 b_{U \cap D}  \big( \sigma(t), \dot{\sigma} (t)\big)\; dt < R,
\end{equation*}
which forces that $  d_D^b(z, w)  < R $. In other words, 
$ B_{ U \cap D }\big(z, R/(1 + \epsilon) \big)\subset B_{D}(z, R) $ if $ z \in U_2 \cap D $.

Pick $ R > 0 $ such that there is a biholomorphic imbedding
$ f: \mathbb{B}^n \rightarrow D $ with $ B_D (z, R) \subset f (\mathbb{B}^n ) $. By composing with an appropriate automorphism of $ \mathbb{B}^n $, if needed, it may be assumed that $ f (0) = z $. For $ \epsilon > 0 $, it is immediate that $ B_D (z, R - \epsilon) \subset f \big( \mathbb{B}^n (0,r) \big) $ for some $ r \in (0,1) $. Since $ D $  supports a local holomorphic peak function at the point $ z^0 $,  it follows from Lemma 15.2.2 of \cite{Rudin} that there is a neighbourhood $ V $ of $ z^0 $, $ V $ relatively compact in $ U_2 $, such that $ f \big( \mathbb{B}^n (0, r)\big) \subset U \cap D $ if $ z \in  V \cap D $. Then $ \tilde{f}: \mathbb{B}^n \rightarrow  U \cap D $ defined by setting $ \tilde{f}(\zeta) = f(r \zeta) $ is a biholomorphic imbedding of $ \mathbb{B}^n $ into $ U \cap D $ with the property that 
\begin{equation*}
B_{ U \cap D }\Big(z, \frac{R - \epsilon}{1 + \epsilon} \Big) \subset B_D (z, R - \epsilon) \subset \tilde{f}(\mathbb{B}^n ), 
\end{equation*} 
whenever $ z \in V \cap D $. It follows that 
\begin{equation} \label{E13}
 h_{ U \cap D} (z) \leq \frac{1 + \epsilon}{ R - \epsilon }
\end{equation}
for $ z \in V \cap D $. Finally, the result follows by combining the inequalities (\ref{E12}) and (\ref{E13}). 
\qed

%%%%%%%%%%%%%%%%%%%%%%%%%%%%%%%%%%%%%%%%%%%

\section{$h$-extendible domains}

In this section, we gather the tools and preparatory results required to prove Theorem \ref{T0} (i). This is done in a general setting of h-extendible domains and can be specialised to the cases (i) and (ii) considered in Theorem \ref{T0}. The class of h-extendible domains includes smooth pseudoconvex finite type domains in $ \mathbb{C}^2 $, convex finite type domains in $ \mathbb{C}^n $ and Levi corank one domains in $ \mathbb{C}^n $.

%\begin{thm}\label{T2}

%Let $ D\subset \mathbb{C}^{n}$ be a bounded pseudoconvex domain that is h-extendible near  $p^{0}\in \partial{D}$ with multitype $ (1, m_2, \ldots, m_n ) $. If $ \Gamma $ is a nontangential cone in $ D $ with vertex at $ p^{0}$, then 
%\begin{equation*}
% \lim_{\Gamma \ni z \rightarrow p^0} h_D(z) \rightarrow h_{D_{\infty}} \left( (-1,0') \right),
%\end{equation*}
%where $D_{\infty}$ is the local model at the point $p^{0}$.
%\end{thm}

%The proof involves several steps.

A pseudoconvex domain $ D \subset \mathbb{C}^n $ is said to be h-extendible near  $p^{0}\in \partial{D}$ with multitype $ (1, m_2, \ldots, m_n ) $ if $ \partial D $ is smooth finite type near $ p^0 $ and the Catlin's multitype $ (1, m_2, \ldots, m_n ) $ of $ \partial D $ at $ p^0 $ coincides with the D’Angelo type at the point $ p^0 $. By the definition of multitype, there are local coordinates $ (z_1, z') = (z_1, z_2, \ldots, z_n ) $ in which $ p^0 = (0,0') $ and the domain $ D $ is defined locally near the origin by $ \rho (z) < 0 $, where
\[
 \rho(z) = \Re z_1 + P( z', \overline{z}') + o(|z_1| + |z_2|^{m_2}+ \cdots + |z_n|^{m_n}),
\]
and $P$ is a $ ( 1/m_2, \ldots, 1/m_n )$- weighted homogeneous plurisubharmonic real-valued polynomial in $ \mathbb{C}^{n-1} $ that does not contain any pluriharmonic terms. It is worthwhile mentioning that the above change of coordinates preserve the non-tangential approach to the origin. The associated domain 
\[
D_{\infty} = \Big\{(z_1,z') : \Re z_1 + P(z', \overline{z}') < 0\Big\},
\]
is said to be the local model for $ D $ at $p^{0}$.
Recall from \cite{JY1} and \cite{DiedrichHerbort-1994} that $ D $ is h-extendible at $ p^0 $ iff there exists a $C^{\infty}$-smooth \textit{bumping} function $ a : \mathbb{C}^{n-1} \setminus \{0\} \rightarrow (0, \infty) $  such that  $a$ is  $ ( 1/m_2, \ldots, 1/m_n )$-weighted homogeneous (i.e. the same weights as the polynomial $ P $), and $ P(\cdot) - \epsilon a(\cdot)$ is strictly plurisubharmonic on $\mathbb{C}^{n-1} \setminus \{0\} $ for $ 0 < \epsilon \leq 1$.

Let $ a $ be a bumping function for the domain $ D $ at the point $ p^0 $ as described above. Fix $ \epsilon \in (0, 1) $. Following \cite{BSY},  there is a neighbourhood  $ U_{\epsilon} $ of the 
origin such that $ U_{\epsilon} \cap D $ is contained in the \textit{bumped model} 
\begin{equation} \label{E15}
 D_{\epsilon}= \Big\{(z_1,z') : \Re z_1 + P(z', \overline{z}') - \epsilon a (z', \overline{z}') < 0\Big\},     
\end{equation}
and, moreover
\begin{equation} \label{E14}
 K_{D_{\epsilon}}(z,z) \rightarrow K_{D_{\infty}}(z,z) 
\end{equation}
uniformly on compact subsets of $ D_{\infty} \times D_{\infty} $ as $ \epsilon \rightarrow 0 $.

\medskip

\noindent\textbf{Scaling the domain $D $ within a nontangential cone:} Let $ D \subset \mathbb{C}^{n}$ be a bounded pseudoconvex domain that is h-extendible near  $p^{0}\in \partial{D}$ with multitype $ (1, m_2, \ldots, m_n ) $. Let $ \rho $, $ U_{\epsilon}$, $ P $ and $ a $ be as described above. Let $ \{p^j\} $ be a sequence of points in $ D $ converging to $ (0,0') \in \partial D $ within a nontangential cone $ \Gamma $ with vertex at the origin.

Let $ \pi^j: \mathbb{C}^n \rightarrow \mathbb{C}^n $ be the anisotropic dilation mappings given by 
\begin{equation} \label{E51}
 \pi^j(z) = \Big( \frac{z_1}{|\rho(p^j)|}, \frac{z_2}{|\rho(p^j)|^{1/m_1}}, \ldots, \frac{ z_n}{|\rho(p^j)|^{1/m_n}} \Big),
\end{equation}
and set $ D^j = \pi^j (D) $. Note that the mappings $ \pi^j $ preserve the non-tangential cone convergence. Also, the sequence of domains $ \{D^j \} $ converges in the Hausdorff sense to $ D_{\infty} $, the local model for $ D $ at $ p^0 $. It is known (see \cite{BSY}) that $ K_{D_{\infty}} $ is non-vanishing along the diagonal. Furthermore, since $ D_{\infty} $ is Carath\'{e}odory hyperbolic (refer Theorem 2.3, \cite{Yu-thesis}), it follows from the generalized Hahn-Lu comparison Theorem (Theorem 5.1 of \cite{AGK}) that the Bergman metric on $ D_{\infty} $ is positive-definite.  

\medskip

Furthermore, it should be noted that as $ p^j $ tends to the origin within the cone $ \Gamma $, the points $ \pi^j(p^j) $ converge to a compact subset of the set $ \{ (z_1, z'): \Re z_1 =-1, z'= 0'\} $. 

The next step is to investigate the stability of $ K_{D^j} $ along the diagonal.

\begin{lem}
For $z\in D_{\infty}$, 
$ K_{D^{j}}(z, z)\to K_{D_{\infty}}(z, z) $  as $j\to\infty$.
\end{lem}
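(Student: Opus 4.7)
The plan is to sandwich $K_{D^j}(z,z)$ between an upper bound obtained by a direct Montel-type argument on the scaled domains and a lower bound obtained by combining the localization of the Bergman kernel at the h-extendible point $p^0$ with the stability statement (\ref{E14}).

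For the upper bound, I would pick, for each $j$ large enough that $z\in D^j$, the normalised extremal $f_j\in A^2(D^j)$ satisfying $\|f_j\|_{L^2(D^j)}=1$ and $|f_j(z)|^2=K_{D^j}(z,z)$. Since $D^j\to D_\infty$ in the local Hausdorff sense, any compact $K\subset D_\infty$ is contained in $D^j$ (with a fixed Euclidean collar) for all $j$ sufficiently large, so the sub-mean value inequality applied to the plurisubharmonic function $|f_j|^2$ bounds $\{f_j\}$ uniformly on $K$. Montel's theorem then extracts a subsequence converging locally uniformly on $D_\infty$ to a holomorphic $f$, and Fatou's lemma yields $\|f\|_{L^2(D_\infty)}\le 1$ together with $|f(z)|^2=\lim_j K_{D^j}(z,z)$ along the subsequence. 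The extremal characterisation of $K_{D_\infty}(z,z)$ then gives $\limsup_j K_{D^j}(z,z)\le K_{D_\infty}(z,z)$.

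For the lower bound, I would invoke the localisation of the Bergman kernel at the local holomorphic peak point $p^0$ (available because $D$ is h-extendible there): for every fixed $\epsilon\in(0,1)$,
\begin{equation*}
 \frac{K_D(q,q)}{K_{U_\epsilon\cap D}(q,q)}\longrightarrow 1\quad\text{as }q\to p^0.
\end{equation*}
The transformation rule for the Bergman kernel under the biholomorphism $\pi^j$, applied at the point $(\pi^j)^{-1}(z)$ (which tends to $p^0$ by the explicit form of $\pi^j$), converts this into
\begin{equation*}
 \frac{K_{D^j}(z,z)}{K_{\pi^j(U_\epsilon\cap D)}(z,z)}\longrightarrow 1\quad\text{as }j\to\infty.
\end{equation*}
Because $P$ and $a$ are $(1/m_2,\ldots,1/m_n)$-weighted homogeneous with the same weights as $\pi^j$, the bumped model $D_\epsilon$ in (\ref{E15}) is $\pi^j$-invariant. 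Combined with $U_\epsilon\cap D\subset D_\epsilon$, this yields $\pi^j(U_\epsilon\cap D)\subset D_\epsilon$, whence by monotonicity $K_{\pi^j(U_\epsilon\cap D)}(z,z)\ge K_{D_\epsilon}(z,z)$. Consequently $\liminf_j K_{D^j}(z,z)\ge K_{D_\epsilon}(z,z)$ for every $\epsilon\in(0,1)$, and letting $\epsilon\to 0$ together with (\ref{E14}) completes the lower bound.

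The step I expect to be most delicate is ensuring the localisation quotient transfers cleanly under the anisotropic scaling: one must check that the error terms in the standard localisation bound for the Bergman kernel are controlled in a way that survives the dilation and does not accumulate with $j$. The $\pi^j$-invariance of $D_\epsilon$, which is what turns the local inclusion $U_\epsilon\cap D\subset D_\epsilon$ into a useful inclusion at every scale, is really the backbone of the argument; once this and the weighted-homogeneity of $a$ are in place, the two estimates together with (\ref{E14}) assemble into the claim.
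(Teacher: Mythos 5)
Your proof is correct. The lower bound you give is essentially identical to the paper's: the inclusion $U_\epsilon\cap D\subset D_\epsilon$, the $\pi^j$-invariance of the bumped model $D_\epsilon$, the transformation rule combined with localisation of the Bergman kernel at the peak point $p^0$ to turn the local inclusion into a statement about $D^j$, and then (\ref{E14}) to push $\epsilon\to 0$. Where you diverge is the upper bound. The paper intersects the localised scaled domains with the limit domain, writing $\pi^{j}(U_{\epsilon}\cap D)\cap D_{\infty}\subset\pi^{j}(U_{\epsilon}\cap D)$, then applies a one-sided Ramadanov theorem (domains increasing to $D_\infty$ from within) to get $K_{\pi^{j}(U_{\epsilon}\cap D)\cap D_{\infty}}\to K_{D_\infty}$, and finally combines with the localisation quotient (\ref{E21}) to transfer back to $D^j$. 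You instead run a self-contained Montel--Fatou argument directly on the normalised extremal functions for $K_{D^j}(z,z)$: the submean-value inequality on a fixed collar gives local uniform bounds, Montel extracts a limit $f$, Fatou gives $\|f\|_{L^2(D_\infty)}\le 1$, and the extremal characterisation of the Bergman kernel then yields $\limsup_j K_{D^j}(z,z)\le K_{D_\infty}(z,z)$. The two routes are genuinely different: your upper bound is more elementary, does not invoke the localisation quotient or Ramadanov at all, and works under weaker hypotheses (all it needs is local Hausdorff convergence); the paper's route is slightly more structural in that it keeps both inequalities organised around the single auxiliary domain $\pi^j(U_\epsilon\cap D)$ and the quotient (\ref{E21}). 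Both are valid; your upper bound is arguably cleaner since it decouples the two halves. One small housekeeping point in your write-up: before applying Montel you should first pass to a subsequence along which $K_{D^j}(z,z)$ realises its $\limsup$, and only then extract the Montel subsequence, so that $|f(z)|^2$ recovers $\limsup_j K_{D^j}(z,z)$ rather than just some subsequential value.
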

\begin{proof}
Fix $z\in D_{\infty}$, $ \epsilon \in (0,1) $ and a neighbourhood $ U_{\epsilon} $ of the origin so that $ U_{\epsilon} \cap D \subset D_{\epsilon} $, where $D_{\epsilon} $ is as defined in (\ref{E15}). Firstly, it suffices to show that 
\begin{equation} \label{E16}
K_{\pi^j( U_{\epsilon} \cap D )}(z,z) \to K_{D_{\infty}}(z, z)
\end{equation}
as $j\to\infty$. Indeed, the transformation formulae for the Bergman kernels under biholomorphisms implies that
\begin{alignat*}{3}
\frac{K_{\pi^{j}(U_{\epsilon} \cap D)}(z, z)}{K_{D^{j}}(z, z)} &=   \frac{K_{\pi^{j}(U_{\epsilon} \cap D)}(z, z)}{K_{\pi^{j}(D)}(z, z)}
&= \frac{K_{U_{\epsilon} \cap D} \left((\pi^{j})^{-1} z,  (\pi^{j})^{-1} z \right)}{K_{D} \left((\pi^{j})^{-1} z,  (\pi^{j})^{-1} z \right)},
\end{alignat*}
for each $ j $. Since $ (\pi^j)^{-1}(z) \rightarrow (0,0') \in \partial D $ as $ j \rightarrow \infty $, and $ D $ supports a local peak function at the origin (refer Theorem 4.1 of \cite{JY1}), and the fact that the Bergman kernels along the diagonals can be localised near local peak points (see Lemma 3.5.2 of \cite{Hor}), it is immediate that
\begin{equation*}
 \lim_{j\to\infty} \frac{K_{U_{\epsilon} \cap D} \left((\pi^{j})^{-1} z,  (\pi^{j})^{-1} z \right)}{K_{D} \left((\pi^{j})^{-1} z,  (\pi^{j})^{-1} z \right)} = 1,
\end{equation*}
and consequently that 
\begin{equation} \label{E21}
\lim_{j\to\infty}\frac{K_{\pi^{j}(U_{\epsilon} \cap D)}(z, z)}{K_{D^{j}}(z, z)} = 1. 
\end{equation}
In order to prove (\ref{E16}), firstly note that 
\begin{equation} \label{E17}
K_{\pi^{j}( U_{\epsilon} \cap D)}(z,z) \leq K_{\pi^{j}( U_{\epsilon} \cap D )\cap D_{\infty}}(z,z), 
\end{equation}
by virtue of the inclusion $ \pi^{j}( U_{\epsilon} \cap D )\cap D_{\infty} \subset
\pi^{j}( U_{\epsilon} \cap D) $ for each $ j $. Moreover, $ \pi^{j}(U_{\epsilon} \cap D) \cap D_{\infty} \rightarrow D_{\infty} $ as $ j \rightarrow \infty $ in the Hausdorff sense. Further, since $ \pi^{j}(U_{\epsilon} \cap D) \cap D_{\infty} \subset D_{\infty} $ for all $j $, a version of Ramadanov's theorem (see the proof of Theorem 12.1.23 of \cite{JP}) implies that
\begin{equation} \label{E18}
 K_{\pi^{j}(U_{\epsilon} \cap D) \cap D_{\infty}}(\cdot, \cdot) \rightarrow K_{D_{\infty}} (\cdot, \cdot) 
\end{equation}
uniformly (along the diagonal) on compact subsets of $ D_{\infty} \times D_{\infty} $. Combining (\ref{E17}) and (\ref{E18}) yields that
\begin{equation} \label{E19}
  \limsup_{j\to\infty} {K_{\pi^{j}( U_{\epsilon}\cap D)}}(z, z) \leq K_{D_{\infty}}(z, z). 
\end{equation}
It remains to verify that 
\begin{equation} \label{E20}
  \liminf_{j\to\infty} {K_{\pi^{j}( U_{\epsilon}\cap D)}}(z, z) \geq K_{D_{\infty}}(z, z). 
\end{equation}
To see this, recall that $ U_{\epsilon} \cap D \subset D_{\epsilon} $ by construction. It follows that $\pi^{j}( U_{\epsilon} \cap D)\subset \pi^{j}(D_{\epsilon}) $ for each $j $. Now, $ D_{\epsilon} $ is invariant under $ \pi^j $ by definition, and hence
\[
 \pi^{j}( U_{\epsilon} \cap D)\subset \pi^{j}(D_{\epsilon}) = D_{\epsilon},
\]
and consequently that
\begin{equation*}
    K_{D_{\epsilon}}(z,z) \leq K_{\pi^{j}( U_{\epsilon} \cap D)}(z, z)
\end{equation*}
for each $j$. Combining the above observation with the localisation statement (\ref{E21}) gives that
\begin{equation*}
 K_{D_{\epsilon}}(z,z) \leq K_{\pi^{j}( U_{\epsilon} \cap D)}(z, z) \leq (1+
 \epsilon) K_{D^j}(z,z) 
\end{equation*}
for all $ j $ large. It follows that
\begin{equation*}
 K_{D_{\epsilon}}(z,z) \leq (1+
 \epsilon) \liminf_{j \rightarrow \infty} K_{D^j}(z,z), 
\end{equation*}
which, in turn, implies that 
\begin{equation} \label{E22}
 K_{D_{\infty}}(z,z) \leq \liminf_{j \rightarrow \infty} K_{D^j}(z,z), 
\end{equation}
owing to (\ref{E14}). To conclude, observe that 
\begin{equation} \label{E23}
 K_{D^j}(z,z) \leq K_{\pi^j(U_{\epsilon} \cap D )}(z,z)
\end{equation}
for each $ j $ since $ \pi^j(U_{\epsilon} \cap D ) \subset D^j $ by definition and the Bergman kernel function decreases when the domain increases. Finally, note that the inequalities (\ref{E22}) and (\ref{E23}) together yield (\ref{E20}). This completes the proof. 
\end{proof}

At this stage, a stability result for the Bergman distance of the scaled domains $ D^j $ is needed. To this end, note that when $ D $ satisfies any of the hypothesis (i)-(iv) of Theorem \ref{T0}, then the associated limiting domain $ D_{\infty} $ is Bergman complete and its Bergman Kernel $ K_{D_{\infty}} $ does not vanish along the diagonal -- a justification for these statements will be provided later when the four cases listed in Theorem \ref{T0} are dealt one-by-one. Moreover, each $ D^j $, being a biholomorph of $ D $, is Bergman complete. 

\section{Convergence of the Bergman distance on the scaled domains}

\begin{thm}\label{L5}
Let $ \Omega^j $ be a sequence of domains in $ \mathbb{C}^n $ converging to another domain $ \Omega_{\infty} \subset \mathbb{C}^n $  in the local Hausdorff sense. If 
\begin{enumerate}
 \item [(a)] $ K_{\Omega_{\infty}} $ is non-vanishing along the diagonal and $ b_{\Omega_{\infty}} $ is positive,
 \item [(b)] $ \Omega^j, \Omega_{\infty} $ are Bergman complete, and
\item [(c)] $ K_{\Omega^{j}}(z, z)\to K_{\Omega_{\infty}}(z, z) $ for $ z \in \Omega_{\infty} $,
\end{enumerate}
then, for $z^{0}\in \Omega_{\infty}$,
\begin{equation*}
d^b_{\Omega^{j}}(z^{0},\cdot) \to d^b_{\Omega_{\infty}}(z^{0},\cdot), 
\end{equation*}
uniformly on compact subsets of $\Omega_{\infty} $. 
\end{thm}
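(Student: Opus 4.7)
The strategy is to upgrade the hypothesized pointwise convergence of the kernels on the diagonal to locally uniform convergence of the infinitesimal Bergman metric, and then pass to the integrated distance with separate upper and lower bound arguments. First, the plurisubharmonic functions $\log K_{\Omega^j}(\cdot, \cdot)$, coupled with the fact that local Hausdorff convergence $\Omega^j \to \Omega_{\infty}$ sandwiches each $\Omega^j$ between compact subsets of $\Omega_{\infty}$ and small open neighbourhoods of $\overline{\Omega_{\infty}}$, allow one to promote pointwise convergence (hypothesis (c)) to locally uniform convergence $K_{\Omega^j}(z, \bar w) \to K_{\Omega_{\infty}}(z, \bar w)$ on $\Omega_{\infty} \times \Omega_{\infty}$ via a Ramadanov-type argument. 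Cauchy estimates for holomorphic functions then transfer this to locally uniform convergence of all derivatives, so $g^{\Omega^j}_{\mu \nu} \to g^{\Omega_{\infty}}_{\mu \nu}$ and, using hypothesis (a), $b_{\Omega^j}(z, \xi) \to b_{\Omega_{\infty}}(z, \xi)$ uniformly on compact subsets of $\Omega_{\infty} \times \mathbb{C}^n$.

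For the upper bound, fix $w \in \Omega_{\infty}$ and $\delta > 0$, and pick a piecewise $C^1$ curve $\gamma: [0, 1] \to \Omega_{\infty}$ from $z^0$ to $w$ of $b_{\Omega_{\infty}}$-length less than $d^b_{\Omega_{\infty}}(z^0, w) + \delta$. The compact trace $\gamma([0,1])$ lies in $\Omega^j$ for $j$ large by local Hausdorff convergence, and the uniform metric convergence on this trace forces the $b_{\Omega^j}$-length of $\gamma$ to converge to its $b_{\Omega_{\infty}}$-length. Letting $\delta \to 0$ yields $\limsup_j d^b_{\Omega^j}(z^0, w) \le d^b_{\Omega_{\infty}}(z^0, w)$.

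The main obstacle is the matching lower bound, since minimizing curves in $\Omega^j$ could a priori drift out of any compact subset of $\Omega_{\infty}$. Suppose, along a subsequence, $d^b_{\Omega^j}(z^0, w) \to \ell$, and choose curves $\gamma^j: [0, 1] \to \Omega^j$ from $z^0$ to $w$ with $b_{\Omega^j}$-length at most $\ell + 1/j$. The confinement step uses Bergman completeness of $\Omega_{\infty}$: the closed Bergman ball $L_0 := \overline{B_{\Omega_{\infty}}(z^0, \ell + 1)}$ is compact in $\Omega_{\infty}$, so fix a slightly larger compact set $L_1 \subset \Omega_{\infty}$ containing $L_0$ in its interior. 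If $\gamma^j$ exits $L_1$ for the first time at $t^j$, then $\gamma^j(t^j) \in \Omega_{\infty} \setminus L_0$, hence $d^b_{\Omega_{\infty}}(z^0, \gamma^j(t^j)) > \ell + 1$. Since $\gamma^j|_{[0, t^j]}$ has trace inside $L_1 \subset \Omega_{\infty}$, the uniform metric convergence on $L_1$ forces its $b_{\Omega^j}$-length to be at least $(1 - o(1))(\ell + 1) > \ell + 1/j$ for $j$ large, a contradiction. Hence the traces of $\gamma^j$ eventually lie in $L_1$. Reparametrizing by $b_{\Omega^j}$-arc length and applying Arzela-Ascoli (using uniform positive lower bounds for $b_{\Omega^j}$ on $L_1$ supplied by the metric convergence and the positivity of $b_{\Omega_{\infty}}$) extracts a Lipschitz limit curve $\gamma: [0, \ell] \to L_1$ from $z^0$ to $w$ whose $b_{\Omega_{\infty}}$-length is at most $\ell$; therefore $d^b_{\Omega_{\infty}}(z^0, w) \le \ell$, as desired.

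Combining both inequalities gives pointwise convergence $d^b_{\Omega^j}(z^0, w) \to d^b_{\Omega_{\infty}}(z^0, w)$ on $\Omega_{\infty}$. For uniformity on a compact $K \subset \Omega_{\infty}$, the uniform metric convergence produces equicontinuity of the family $\{d^b_{\Omega^j}(z^0, \cdot)\}_j$ on $K$ (nearby points of $K$ are joined by short segments whose $b_{\Omega^j}$-lengths are controlled uniformly in $j$), which together with continuity of $d^b_{\Omega_{\infty}}(z^0, \cdot)$ upgrades pointwise convergence to uniform convergence on $K$ by a standard Arzela-Ascoli argument.
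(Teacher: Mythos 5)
Your proof is correct and, for the crucial lower bound
$d^b_{\Omega_\infty}(z^0,w)\le\liminf_j d^b_{\Omega^j}(z^0,w)$,
it takes a genuinely different and more elementary route than the paper. The first half (upgrading pointwise kernel convergence on the diagonal to locally uniform convergence with all derivatives via a normal-families/Ramadanov argument, and then to locally uniform convergence of $b_{\Omega^j}\to b_{\Omega_\infty}$) coincides with the paper's Lemma 4.2--Lemma 4.4, and the upper bound by perturbing a near-minimizing curve is likewise the same. For the lower bound, however, the paper works with actual Bergman geodesics $\sigma^j$ in $\Omega^j$ (which exist by Hopf--Rinow since $\Omega^j$ is assumed Bergman complete), proves their initial velocities are bounded, passes to convergence of Christoffel symbols, and invokes continuous dependence of ODE solutions on the data to obtain $\sigma^j\to\sigma_0$; this requires $C^1$ control on the metric tensors and the full weight of hypothesis (b) for the $\Omega^j$. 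You instead take arbitrary near-minimizing curves $\gamma^j$, confine their traces to a fixed compact $L_1\subset\Omega_\infty$ using compactness of closed Bergman balls in $\Omega_\infty$ (Bergman completeness of $\Omega_\infty$ only), reparametrize by $b_{\Omega^j}$-arclength, and apply Arzel\`a--Ascoli together with lower semicontinuity of length under $C^0$ convergence. This uses only $C^0$ convergence of the metrics and, notably, does not seem to require Bergman completeness of the approximating domains $\Omega^j$ themselves (beyond what is implicit in $b_{\Omega^j}$ being a metric), so it is both softer and structurally simpler than the geodesic/ODE argument.

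Two points in your write-up are compressed and deserve one sentence each if this were to be fleshed out: (1) the assertion that the uniform limit $\gamma$ of the arclength reparametrizations has $b_{\Omega_\infty}$-length at most $\ell$ should invoke the lower semicontinuity of Riemannian length under uniform convergence (comparing the $b_{\Omega_\infty}$-lengths of $\gamma^j$ to their $b_{\Omega^j}$-lengths via the locally uniform metric comparison on $L_1$, and then letting the comparison factor $1+\epsilon\to1$); and (2) the promotion of hypothesis (c) to locally uniform convergence of $K_{\Omega^j}(\cdot,\cdot)$ relies on the uniform boundedness of $K_{\Omega^j}$ on compacta (from $\Omega^j\supset S$ for $j$ large) plus the extremal characterization of the Bergman kernel, not merely on plurisubharmonicity of $\log K_{\Omega^j}$. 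Neither gap is serious; both are standard and the paper supplies exactly these ingredients in Lemma 4.2.
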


The proof involves several steps. To begin with, observe the following:

\begin{lem} \label{L2}
$K_{\Omega^{j}}(z,w) \to K_{\Omega_{\infty}}(z,w) $ uniformly on compact subsets of $ \Omega_{\infty}\times \Omega_{\infty}$ together with all the derivatives.
\end{lem}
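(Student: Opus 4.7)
The strategy is to deduce off-diagonal convergence and derivative convergence from the diagonal hypothesis (c) by a standard normal-families argument. The plan is, first, to establish a uniform bound $|K_{\Omega^j}(z,w)| \le C_L$ on compact sets $L \times L \subset \Omega_\infty \times \Omega_\infty$ for $j$ large; second, to invoke Montel's theorem to extract subsequences converging locally uniformly together with all derivatives; third, to identify every subsequential limit as $K_{\Omega_\infty}$ using hypothesis (c) and a polarization argument on the maximal totally real diagonal; and fourth, to conclude by the subsequence-of-subsequence principle that the full sequence converges.

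For the uniform bound, fix a compact set $L \subset \Omega_\infty$. Local Hausdorff convergence provides $r_0 > 0$ and $j_0 \in \mathbb{N}$ such that $B^n(z, r_0) \subset \Omega^j$ for every $z \in L$ and every $j \ge j_0$. Applied to $|f|^2$ for $f \in A^2(\Omega^j)$, the mean-value inequality gives
\[
|f(z)|^2 \le \frac{\|f\|^2_{A^2(\Omega^j)}}{\lambda\bigl(B^n(z, r_0)\bigr)},
\]
so the extremal characterization $K_{\Omega^j}(z, z) = \sup\{|f(z)|^2 : \|f\|_{A^2(\Omega^j)} \le 1\}$ yields a uniform bound $K_{\Omega^j}(z, z) \le C_L$ for $z \in L$ and $j \ge j_0$. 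Cauchy--Schwarz $|K_{\Omega^j}(z,w)|^2 \le K_{\Omega^j}(z,z)\, K_{\Omega^j}(w,w)$ then extends this to a uniform bound on $L \times L$. Since each $K_{\Omega^j}(z,w)$ is holomorphic in $z$ and antiholomorphic in $w$, Montel's theorem produces from every subsequence a further subsequence converging uniformly on compact subsets of $\Omega_\infty \times \Omega_\infty$, together with all partial derivatives in $z$ and $\bar{w}$, to a limit $F(z,w)$ of the same type.

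Hypothesis (c) forces $F(z,z) = K_{\Omega_\infty}(z,z)$ on the diagonal. Writing $F(z,w) = \widetilde{F}(z, \bar{w})$ and $K_{\Omega_\infty}(z,w) = \widetilde{K}(z, \bar{w})$ as genuine holomorphic functions on $\Omega_\infty \times \overline{\Omega_\infty} \subset \mathbb{C}^{2n}$, agreement on the maximal totally real submanifold $\{(z, \bar{z}) : z \in \Omega_\infty\}$ forces $\widetilde{F} \equiv \widetilde{K}$ by the identity principle for holomorphic functions vanishing on a maximal totally real set. Hence every subsequential limit coincides with $K_{\Omega_\infty}$, and the full sequence $\{K_{\Omega^j}\}$ therefore converges to $K_{\Omega_\infty}$ uniformly on compacts together with all derivatives, the last assertion being automatic from Weierstrass's theorem.

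The main obstacle is the uniform diagonal bound: pointwise convergence in (c) together with continuity of $K_{\Omega_\infty}$ does not a priori prevent the $K_{\Omega^j}(\cdot,\cdot)$ from spiking on $L$, and the mean-value reduction above, while elementary, is the only place where the Hausdorff convergence of the domains enters in an essential way. Once that bound is secured, polarization and Montel supply the rest.
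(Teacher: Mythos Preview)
Your argument is correct, and the overall architecture---uniform diagonal bound, Cauchy--Schwarz, Montel, identification of the limit, subsequence principle, Weierstrass---matches the paper's. The one substantive divergence is in how the limit is identified off the diagonal. You invoke polarization: since both the subsequential limit $F$ and $K_{\Omega_\infty}$ are holomorphic in $z$ and antiholomorphic in $w$, agreement on the maximal totally real set $\{(z,\bar z)\}$ forces $F\equiv K_{\Omega_\infty}$. The paper instead argues directly via the extremal characterization of the Bergman kernel: using Fatou-type estimates it shows that $f(\cdot,w)/f(w,w)$ lies in $A^2(\Omega_\infty)$, has value $1$ at $w$, and has $L^2$ norm at most $1/K_{\Omega_\infty}(w,w)$, hence must coincide with $K_{\Omega_\infty}(\cdot,w)/K_{\Omega_\infty}(w,w)$ by uniqueness of the minimizer. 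Your route is shorter and cleaner; the paper's route is more self-contained (no appeal to the identity principle on totally real sets) and yields as a by-product that the limit kernel lies in $A^2(\Omega_\infty)$ slice-by-slice. A minor difference in the bounding step: you use the mean-value inequality on a uniform Euclidean ball, whereas the paper uses the monotonicity $K_{\Omega^j}\le K_S$ for a fixed relatively compact neighbourhood $S$---these are equivalent in spirit.
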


\begin{proof}
Let $ S $ be any relatively compact subset of $ \Omega_{\infty}$. Since $ \{ \Omega^j\} $
converges to $ \Omega_{\infty} $, it follows that $ S \subset \Omega^j $ for all $j$ large and hence, 
\begin{equation} \label{E24}
K_{ \Omega^{j}}(z, z) \leq K_{S}(z, z) 
\end{equation}
for all $z \in S $ and $j$ large. Since 
\begin{align} \label{E25}
|K_{ \Omega^{j}}(z,w)| &\leq \sqrt{K_{ \Omega^{j}}(z, z)}\sqrt{K_{ \Omega^{j}}(w, w)},
\end{align}
for $z, w \in S $ and $j$ large. Since $ S $ was any arbitrary compact subset of $ \Omega_{\infty} $, it follows from the inequalities (\ref{E24}) and (\ref{E25}) that the sequence $\{K_{\Omega^{j}}\}$ is uniformly bounded on compact subsets of $ \Omega_{\infty}\times \Omega_{\infty}$. Hence, the sequence $ \{K_{\Omega^j} \} $ admits a  subsequence (which will be denoted by the same symbols) that converges uniformly on compact subsets of $ \Omega_{\infty}\times \Omega_{\infty}$. In particular, it follows that the limit function $ f (z,w) $, where $ (z,w) \in \Omega_{\infty} \times \Omega_{\infty} $ is holomorphic in the $ z$-variable and conjugate-holomorphic in the $ w$-variable. Further, note that the uniqueness of limits forces that
\begin{equation} \label{E28}
K_{\Omega_{\infty}} (w,w) = f(w,w) 
\end{equation}
(along the diagonal) for $ w \in \Omega_{\infty} $. In particular, the non-vanishing of $ K_{\Omega_{\infty}} $ along the diagonal yields that
\begin{equation*}
f (w,w) > 0,  w \in \Omega_{\infty}.
\end{equation*}
The next step is to invoke the minimizing property of the Bergman kernel to infer that $ f \equiv K_{\Omega_{\infty}}$. To this end, first observe that for $ S $ as above and $ w \in S $ fixed, 
\begin{align} \label{E26}
    \int_S | f(z, w) |^{2} d \lambda (z) 
    \leq \liminf_{j\to\infty}{\int_{S} |K_{\Omega^{j}}(z,w)|^{2}\; d\lambda(z)}
         \leq \liminf_{j\to\infty}{\int_{\Omega^{j}} |K_{\Omega^{j}}(z,w)|^{2}\; d\lambda(z)}.
    \end{align}
Since $ K_{\Omega^j} ( \cdot, w) $ is the reproducing kernel for $ A^2 (\Omega^j ) $, it is immediate that
\begin{align*}
   {\int_{\Omega^{j}} |K_{\Omega^{j}}(z,w)|^{2}\; d\lambda(z)} = K_{\Omega^{j}}(w, w), 
\end{align*}
for each $ j $. So, it follows from (\ref{E26}) that 
\begin{equation*}
\int_S | f(z, w) |^{2} d \lambda (z) \leq  \liminf_{j\to\infty} K_{\Omega^{j}}(w, w).
\end{equation*}
But 
\begin{equation*}  
K_{\Omega^{j}} (w,w) \rightarrow K_{\Omega_{\infty}} (w,w), 
\end{equation*}
(along the diagonal) and hence, 
\begin{equation*} \label{E27}
 \int_S | f(z, w) |^{2} d \lambda (z) \leq  K_{\Omega_{\infty}} (w,w).
\end{equation*}
Combining the above observation with (\ref{E28}) renders that
%Next, it follows from (\ref{E27}) and (\ref{E28}) that
\begin{alignat}{3} \label{E29}
\int_{S} \left| \frac{f(z, w)}{f(w,w)} \right|^{2} d \lambda (z) \leq \frac{1}{ K_{\Omega_{\infty}} (w,w)}.
\end{alignat}
Since 
\begin{equation*}
 K_{\Omega_{\infty}} (w,w) = \sup \left\{ \Big( \int_{\Omega_{\infty}} |h(\zeta)|^2 \; d \lambda (\zeta) \Big)^{-1}: h \in A^2(\Omega_{\infty}), h(w)= 1 \right\},
\end{equation*}
it is immediate from (\ref{E29}) that 
\begin{equation*}
\int_{S} \left| \frac{f(z, w)}{f(w,w)} \right|^{2} d \lambda (z) \leq \int_{\Omega_{\infty}} |h(\zeta)|^2 \; d \lambda (\zeta), 
\end{equation*}
and consequently that
\begin{equation*}
\int_{\Omega_{\infty}} \left| \frac{f(z, w)}{f(w,w)} \right|^{2} d \lambda (z) \leq \int_{\Omega_{\infty}} |h(\zeta)|^2 \; d \lambda (\zeta), 
\end{equation*}
for every $ h \in A^2(\Omega_{\infty}) $ with $ h(w)= 1 $. 
%On the other hand, since the Bergman Kernel satisfies
%\begin{equation*}
% \int_{D_{\infty}} |K_{D_{\infty}}(z,w)|^{2}\; d\lambda(z) %= K_{D_{\infty}}(w,w),
%\end{equation*}
%it is straightforward to see that
%\begin{alignat*}{3}
%\frac{1}{ K_{D_{\infty}} (w,w)} = \int_{D_{\infty}} \left| %\frac{K_{D_{\infty}}(z, w)}{K_{D_{\infty}}(w,w)} \right|%^{2} d \lambda (z).  
%\end{alignat*}
%Combine the above observation with (\ref{E29}) to obtain
%\begin{alignat*}{3} 
%\int_{D_{\infty}} \left| \frac{f(z, w)}{f(w,w)} \right|^{2} d \lambda (z)  \leq \int_{D_{\infty}} \left| \frac{K_{D_{\infty}}(z, w)}{K_{D_{\infty}}(w,w)} \right|^{2} d \lambda (z).
%\end{alignat*}
This is just the assertion that  $ f( \cdot, w)/ f(w,w) $ is an solution in $ A^2(\Omega_{\infty}) $ of the variational problem
\[
\min \int_{\Omega_{\infty}} |g(z)|^{2}\; d\lambda (z), 
\]
for $ g \in A^{2}(\Omega_{\infty}) $ and $ g(w) =1 $.
But the function $K_{\Omega_{\infty}}(\cdot, w)/K_{\Omega_{\infty}}(w, w)$ is the unique solution in $ A^2(\Omega_{\infty}) $ of this extremal problem, and hence, it follows that $ f(\cdot, w) = K_{\Omega_{\infty}}(\cdot, w) $. Since $ w $ was any arbitrary point of $ \Omega_{\infty}$, it follows that $ f \equiv K_{\Omega_{\infty}} $. The above reasoning also shows that every convergent subsequence of $ \{ K_{\Omega^{j}} \}$ has the same limit $K_{\Omega_{\infty}}$ and hence, the sequence $ \{ K_{\Omega^{j}} \} $ itself converges to $ K_{\Omega_{\infty}}$. Since each of the functions $K_{\Omega^{j}}(z, w)$ are harmonic, the convergence of the corresponding derivatives follows. 
\end{proof}

Observe that Lemma \ref{L2} together with the facts that $ K_{\Omega^{j}}$ and $K_{\Omega_{\infty}}$ are non-vanishing along the diagonal gives the following result on the stability of the Bergman metric tensors (recall the definition from (\ref{E30})). 

\begin{cor} \label{C1}
For $ \mu, \nu = 1, \ldots, n $, 
$ g_{\mu \nu}^{\Omega^j} \rightarrow g_{\mu \nu}^{\Omega_{\infty}} $ as $ j \rightarrow \infty $ uniformly on compact subsets of $\Omega_{\infty}$ together with all the derivatives.
\end{cor}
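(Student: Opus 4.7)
The plan is to obtain the corollary as a direct consequence of Lemma \ref{L2}. The starting point is that Lemma \ref{L2} already delivers uniform convergence $K_{\Omega^j}(z, w) \to K_{\Omega_\infty}(z, w)$ on compact subsets of $\Omega_\infty \times \Omega_\infty$ together with all partial derivatives. By restricting to the anti-diagonal $w = z$ and applying the chain rule, one infers uniform convergence of the diagonal functions $z \mapsto K_{\Omega^j}(z, z)$ on compact subsets of $\Omega_\infty$, together with all their partial derivatives with respect to $z_\mu$ and $\bar z_\nu$.

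The next step is to pass to the logarithm. Here the role of the diagonal non-vanishing hypothesis becomes crucial: on any compact $S \subset \Omega_\infty$, continuity of $K_{\Omega_\infty}$ yields some $\delta > 0$ with $K_{\Omega_\infty}(z, z) \geq \delta$ on $S$, and uniform convergence then forces $K_{\Omega^j}(z, z) \geq \delta/2$ on $S$ for $j$ sufficiently large. This makes $\log K_{\Omega^j}(z, z)$ well-defined on $S$ for large $j$. Since $\log$ is smooth on $(0, \infty)$, an application of the chain rule, together with uniform convergence of all derivatives of $K_{\Omega^j}(z, z)$ on $S$, yields uniform convergence $\log K_{\Omega^j}(z, z) \to \log K_{\Omega_\infty}(z, z)$ on $S$, along with all derivatives.

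Finally, since the definition (\ref{E30}) expresses $g^{\Omega}_{\mu \nu}(z)$ as the mixed partial derivative $\partial^2_{z_\mu \bar z_\nu} \log K_\Omega(z, z)$, applying this mixed derivative to the convergence statement above gives the asserted uniform convergence $g_{\mu \nu}^{\Omega^j} \to g_{\mu \nu}^{\Omega_\infty}$ on compact subsets of $\Omega_\infty$, together with all derivatives. I do not anticipate any serious obstacle in this argument: the entire content of the corollary is repackaged from Lemma \ref{L2} plus the diagonal non-vanishing of $K_{\Omega_\infty}$, and the proof reduces to tracking how uniform convergence with all derivatives is preserved under restriction to the diagonal and composition with the logarithm.
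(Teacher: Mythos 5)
Your argument is correct and is precisely the route the paper has in mind: it states Corollary~\ref{C1} as following directly from Lemma~\ref{L2} together with diagonal non-vanishing of the Bergman kernels, and your write-up simply fills in the details (restriction to the diagonal, local lower bound on $K_{\Omega^j}(z,z)$, chain rule through $\log$, then the mixed derivative $\partial^2_{z_\mu\bar z_\nu}$). One small terminological slip: the set $\{w=z\}$ you restrict to is the diagonal, not the anti-diagonal; and it is worth noting explicitly that the derivatives of $z\mapsto K_\Omega(z,z)$ in $z_\mu$ and $\bar z_\nu$ pick up derivatives of $K_\Omega(z,w)$ in \emph{both} argument slots, which is covered since Lemma~\ref{L2} gives convergence of all derivatives of $K_{\Omega^j}(\cdot,\cdot)$ on $\Omega_\infty\times\Omega_\infty$.
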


An immediate consequence of the above corollary is the convergence at the level of the infinitesimal Bergman metric for $ \Omega^j $. More precisely, it follows that

\begin{lem}\label{L3}
 $b_{\Omega^{j}}\to b_{\Omega_{\infty}}$ uniformly on compact subsets of $ \Omega_{\infty}\times\mathbb{C}^{n}$.
  \end{lem}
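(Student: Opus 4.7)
The statement to establish is that $b_{\Omega^j}\to b_{\Omega_\infty}$ uniformly on compact subsets of $\Omega_\infty\times\mathbb{C}^n$. This follows in a fairly direct way from Corollary \ref{C1}, so the plan is essentially to unwind the definitions.

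The first step is to fix a compact set $K \subset \Omega_\infty \times \mathbb{C}^n$ and, by projecting onto the two factors, enclose it in a product $K_1 \times K_2$ with $K_1 \subset \Omega_\infty$ and $K_2 \subset \mathbb{C}^n$ both compact. Let $M = \sup_{\xi \in K_2} |\xi|$. From the definition
\[
b_{\Omega}(z,\xi)^2 = \sum_{\mu,\nu=1}^n g_{\mu\nu}^{\Omega}(z)\, \xi_\mu \overline{\xi}_\nu,
\]
one computes
\[
\bigl|b_{\Omega^j}(z,\xi)^2 - b_{\Omega_\infty}(z,\xi)^2\bigr|
\;\leq\; M^2 \sum_{\mu,\nu=1}^n \bigl|g_{\mu\nu}^{\Omega^j}(z) - g_{\mu\nu}^{\Omega_\infty}(z)\bigr|
\]
for every $(z,\xi) \in K_1 \times K_2$. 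By Corollary \ref{C1}, each of the finitely many differences on the right tends to zero uniformly on $K_1$, and hence $b_{\Omega^j}^2 \to b_{\Omega_\infty}^2$ uniformly on $K_1 \times K_2$, and in particular on $K$.

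The final step is to pass from the squares to the actual values of the metric. This uses the elementary inequality $|\sqrt{a} - \sqrt{b}| \leq \sqrt{|a-b|}$ valid for all $a,b \geq 0$, applied with $a = b_{\Omega^j}(z,\xi)^2$ and $b = b_{\Omega_\infty}(z,\xi)^2$; this yields
\[
\bigl|b_{\Omega^j}(z,\xi) - b_{\Omega_\infty}(z,\xi)\bigr| \;\leq\; \sqrt{\bigl|b_{\Omega^j}(z,\xi)^2 - b_{\Omega_\infty}(z,\xi)^2\bigr|},
\]
so the uniform convergence of the squared metrics on $K$ established above upgrades to uniform convergence of $b_{\Omega^j}$ to $b_{\Omega_\infty}$ on $K$. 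Since $K$ was an arbitrary compact subset of $\Omega_\infty \times \mathbb{C}^n$, the lemma follows. There is no real obstacle here: the only subtle point worth flagging is that although $b_{\Omega_\infty}(z,\xi)$ can vanish when $\xi = 0$, the square-root estimate above is valid at every $(z,\xi)$ regardless of whether the limit is strictly positive, so no case distinction is required.
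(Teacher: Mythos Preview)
Your proof is correct. The main difference from the paper's argument is in how you pass from convergence of the squared metrics to convergence of the metrics themselves: the paper writes
\[
b_{\Omega^{j}}(z,\xi) - b_{\Omega_{\infty}}(z,\xi) = \frac{\xi\,\mathbf{G}^{j}(z)\,\bar{\xi}^{T} - \xi\,\mathbf{G}(z)\,\bar{\xi}^{T}}{(\xi\,\mathbf{G}^{j}(z)\,\bar{\xi}^{T})^{1/2} + (\xi\,\mathbf{G}(z)\,\bar{\xi}^{T})^{1/2}}
\]
and then invokes the positive-definiteness hypothesis on $b_{\Omega_{\infty}}$ (assumption (a) of Theorem~\ref{L5}) to bound the denominator below by a multiple of $|\xi|$. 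Your use of the elementary inequality $|\sqrt{a}-\sqrt{b}|\leq\sqrt{|a-b|}$ sidesteps this entirely: no lower bound on the denominator is needed, and in particular the positivity of $b_{\Omega_{\infty}}$ is not used. This makes your argument both shorter and slightly more general, at the modest cost of obtaining a bound that scales like $\sqrt{\varepsilon}$ rather than $\varepsilon$; for the purpose of establishing uniform convergence this is irrelevant.
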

  
 \begin{proof}
  Let $ S \subset  \Omega_{\infty}$ be compact. Then $ S $ is relatively compact in $ \Omega^{j}$ for $j$ large. For $ \xi $ a holomorphic tangent vector at $ z \in S$, consider
  \begin{align} \nonumber
    b_{\Omega^{j}}(z,\xi) - b_{\Omega_{\infty}}(z,\xi) &= 
    \left( \sum_{\mu, \nu=1}^n g^{\Omega^j}_{\mu \nu}(z) \xi_{\mu} \overline{\xi}_{\nu} \right)^{1/2} - \left( \sum_{\mu, \nu=1}^n g^{\Omega_{\infty}}_{\mu \nu}(z) \xi_{\mu} \overline{\xi}_{\nu} \right)^{1/2}  
    \\
    &= \label{E34}
    \frac{{\xi} \textbf{ G}^{j}(z) \bar{\xi}^{T} - {\xi} \textbf{G}(z) \bar{\xi}^{T}}{({\xi} \textbf{G}^{j}(z) \bar{\xi}^{T})^{1/2} + ({\xi} \textbf{G}(z) \bar{\xi}^{T})^{1/2}},
 \end{align}
where $ \textbf{G}^{j}(\cdot) $ and $ \textbf{G}( \cdot) $ are the $ n \times n $ matrices $ \left( g_{\mu \nu}^{\Omega^{j}}(\cdot) \right) $  and $ \left( g_{\mu \nu}^{\Omega_{\infty}}(\cdot) \right) $ respectively. Moreover, the positivity of $ b_{\Omega_{\infty}} $ renders an $ \lambda_0 > 0 $ such that 
\begin{equation} \label{E31}
| {\xi} \textbf{G}(z) \bar{\xi}^{T}| \geq \lambda_0 |\xi|^{2}, 
\end{equation}
for all $z \in S $. Further,
\begin{equation} \label{E32}
\|\textbf{G}^{j} - \textbf{G} \| \rightarrow 0
\end{equation}
uniformly on $S$ by virtue of Corollary \ref{C1}. Here, 
$\|{\cdot}\|$ denotes the operator norm on the space of $ n \times n $ matrices. It follows from (\ref{E31}) and (\ref{E32}) that  
\begin{align} \label{E33}
    |{\xi} \textbf{G}^{j}(z) \bar{\xi}^{T}| \geq 
    |{\xi} \textbf{G}(z) \bar{\xi}^{T}| - |{\xi} ( \textbf{G}^{j} - \textbf{G})(z) \bar{\xi}^{T}|
    \geq \frac{\lambda_0|\xi|^{2}}{2}
    \end{align}
for all $ j $ large. 

The desired convergence now follows by combining the observations (\ref{E32}), (\ref{E33}) and (\ref{E31}) together with (\ref{E34}).
 \end{proof}

For notational convenience, we write $ g_{\mu \nu}^{\Omega_{\infty}} $ as simply $ g_{\mu \nu} $ in the sequel. Regard $ \Omega^{j}$ and $ \Omega_{\infty}$ as Riemannian domains in $\mathbb{R}^{2n}$. Writing the real co-ordinates for $ z = (x_{1}+ \iota x_{2},\ldots,x_{2n-1}+ \iota x_{2n}) \simeq (x_{1}, x_{2},\ldots, x_{2n-1}, x_{2n})$ and $ \xi = (y_{1} + \iota y_{2},\ldots, y_{2n - 1} + \iota y_{2n}) \simeq (y_{1}, y_{2},\ldots, y_{2n-1}, y_{2n}) $, it is immediate that
 
\[
\sum_{\mu, \nu = 1}^{n} g_{\mu \nu}(z) {\xi}_{\mu} \bar{{\xi}_{\nu}} = \sum_{\mu, \nu = 1}^{2n} \tilde{g_{\mu \nu}}(x) y_{\mu} y_{\nu}
\]
where $ \tilde{g_{\mu\nu}}$ is either $ \Re {g_{\mu' \nu'}}$ or $ \Im g_{\mu' \nu'}$ for $ 1 \leq \mu', \nu' \leq n $. Note that the $ n \times n $ matrix $ \textbf{G} = \left( g_{\mu \nu} \right) $ is positive definite iff the $ 2n \times 2n $ matrix $ \tilde{\textbf{G}} = \left( \tilde{g}_{\mu \nu} \right)$ is positive definite. 

At this stage, note that a stability result for the Bergman distances on $ \Omega^j $ relies on the corresponding statement for the Christoffel symbols of $ \Omega^j $ and $ \Omega_{\infty} $. Recall that the Christoffel symbols for the Riemannian connection in terms of the $ \tilde{g_{\mu \nu}}$ for $ \Omega_{\infty}$ are defined as
 \begin{equation}\label{eqn:A}
 \Gamma^{\eta, \Omega_{\infty}}_{\mu \nu} = 
 \frac{1}{2}\sum_{\tau = 1}^{2n}\Big\{\pdv{\tilde{g}_{\nu \tau}}{x_{\mu}} + \pdv{\tilde{g}_{\tau \mu} }{x_{\nu}} - \pdv{\tilde{g}_{\mu \nu}}{x_{\tau}}\Big\} \tilde{g}^{\tau \eta},
 \end{equation} 
 where $ \tilde{g}^{\tau \eta}$ is the $ (\tau, \eta)$-entry of the matrix $ {\tilde{\textbf{G}}}^{-1}$ and $ \mu, \nu, \eta = 1, \ldots, 2n $. The  Christoffel symbols $ \Gamma^{\eta, \Omega^j}_{\mu \nu} $ for $ \Omega^j $ are defined analogously. The next result is obtained as a consequence of Corollary \ref{C1} and the fact the Bergman metric on  $ \Omega^j$ and $ \Omega_{\infty} $ are positive definite.

\begin{lem}\label{L4}
For $ \mu, \nu, \eta = 1, \ldots, 2n $, $ \Gamma^{\eta, \Omega^j}_{\mu \nu} \rightarrow \Gamma^{\eta, \Omega_{\infty}}_{\mu \nu} $ as $ j \rightarrow \infty $ uniformly on compact subsets of $ \Omega_{\infty}$.
\end{lem}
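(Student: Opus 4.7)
\medskip

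\textbf{Proof proposal for Lemma \ref{L4}.} The plan is to read off the convergence of the Christoffel symbols directly from the defining formula \eqref{eqn:A}, by separately handling the numerator (partial derivatives of the metric tensor entries) and the factor $\tilde g^{\tau\eta}$ (entries of the inverse metric matrix).

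First, by Corollary \ref{C1}, $g_{\mu\nu}^{\Omega^j}\to g_{\mu\nu}^{\Omega_\infty}$ uniformly on compact subsets of $\Omega_\infty$ \emph{together with all derivatives}. Passing to real coordinates, the same is true for the real/imaginary parts $\tilde g_{\mu\nu}^{\Omega^j}$ and for their first-order partial derivatives $\partial_{x_\mu}\tilde g_{\nu\tau}^{\Omega^j}$. Hence the bracket
\[
\frac{\partial \tilde g_{\nu\tau}}{\partial x_\mu}+\frac{\partial \tilde g_{\tau\mu}}{\partial x_\nu}-\frac{\partial \tilde g_{\mu\nu}}{\partial x_\tau}
\]
appearing in \eqref{eqn:A} converges uniformly on compact subsets of $\Omega_\infty$ from $\Omega^j$ to $\Omega_\infty$.

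Next I would argue that the inverse matrices $(\tilde{\mathbf G}^j)^{-1}(z)$ converge to $\tilde{\mathbf G}^{-1}(z)$ uniformly on compact subsets of $\Omega_\infty$. Fix a compact set $S\subset \Omega_\infty$. Since $b_{\Omega_\infty}$ is positive (assumption (a) of Theorem \ref{L5}), the positive-definite matrix $\tilde{\mathbf G}(z)$ satisfies a uniform lower bound on $S$, so $\det \tilde{\mathbf G}(z)\geq c_0>0$ on $S$. Combined with the uniform convergence $\tilde{\mathbf G}^j\to \tilde{\mathbf G}$ on $S$ already established, this gives $\det \tilde{\mathbf G}^j(z)\geq c_0/2>0$ on $S$ for $j$ large (see also the estimate \eqref{E33} in the proof of Lemma \ref{L3}). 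Cramer's rule then expresses each entry $\tilde g^{\tau\eta}$ as a polynomial in the entries of $\tilde{\mathbf G}$ divided by $\det \tilde{\mathbf G}$, so uniform convergence of $\tilde{\mathbf G}^j$ on $S$ together with the uniform lower bound on the determinant yields uniform convergence $(\tilde{\mathbf G}^j)^{-1}\to \tilde{\mathbf G}^{-1}$ on $S$.

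Plugging these two pieces into \eqref{eqn:A}, the Christoffel symbols $\Gamma^{\eta,\Omega^j}_{\mu\nu}$ are sums of products of things converging uniformly on $S$, and hence they themselves converge uniformly on $S$ to $\Gamma^{\eta,\Omega_\infty}_{\mu\nu}$. Since $S$ was an arbitrary compact subset of $\Omega_\infty$, this gives the statement of the lemma. The only potentially delicate point is the uniform invertibility of $\tilde{\mathbf G}^j$ on compacts, but this is handled exactly as in the proof of Lemma \ref{L3}: positivity of $b_{\Omega_\infty}$ gives a uniform spectral lower bound on $\tilde{\mathbf G}$, which is then inherited by $\tilde{\mathbf G}^j$ for $j$ large by the uniform convergence of entries. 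No further input beyond Corollary \ref{C1} and assumption (a) of Theorem \ref{L5} is needed.
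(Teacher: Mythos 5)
Your proposal is correct and follows essentially the same route the paper intends: the paper gives no written proof for Lemma \ref{L4}, stating only that it ``is obtained as a consequence of Corollary \ref{C1} and the fact the Bergman metric on $\Omega^j$ and $\Omega_{\infty}$ are positive definite,'' and your argument is precisely the expansion of that sentence -- derivative convergence from Corollary \ref{C1} handles the bracket in \eqref{eqn:A}, while positive-definiteness of $\tilde{\mathbf G}$ together with uniform convergence of entries gives uniform invertibility and hence convergence of $\tilde g^{\tau\eta}$ via Cramer's rule.
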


We are now in a position to provide:

\medskip

\noindent \textit{Proof of Theorem \ref{L5}:}
 Suppose that the assertion of the lemma is false. Then there exists an $ \epsilon_0 >0 $,  a compact set $ S \subset \Omega_{\infty}$, and  points $ q^{j} \in S $ such that
 \begin{equation*}
 |d^b_{\Omega^{j}}(z^{0}, q^{j}) - d^b_{\Omega_{\infty}}(z^{0}, q^{j}) | > \epsilon_0 
 \end{equation*}
 for all $j$ large. Note that $ S $ is compactly contained in $ \Omega^j $ for all $ j $ large, and consequently that, the points $ q^j \in \Omega^j $ for all $ j $ large.
 After passing to a subsequence, if needed, assume that $ q^{j } \rightarrow q^{0} \in S $. Further, the continuity of $d^b_{ \Omega_{\infty}}(z^{0},\cdot) $ guarantees that 
 \begin{equation*}
  d^b_{\Omega_{\infty}}(z^{0}, q^{j}) \rightarrow d^b_{\Omega_{\infty}}(z^{0}, q^0)
 \end{equation*}
and hence,  
 \begin{equation}\label{eqn:E}
    |d^b_{\Omega^{j}}(z^{0}, q^{j}) - d^b_{\Omega_{\infty}}(z^{0}, q^{0}) | > \epsilon_0/2. 
 \end{equation}
 For $ \epsilon > 0 $ fixed, choose a piecewise $ C^1$-smooth path  $\gamma : [0,1]\to \Omega_{\infty}$ such that $\gamma(0) = z^{0}$, $\gamma(1) = q^{0}$ and 
 \begin{equation*}
 \int^{1}_{0} b_{\Omega_{\infty}}(\gamma(t), \dot{\gamma}(t)) dt < d^b_{\Omega_{\infty}}(z^{0}, q^{0}) + \epsilon/2.
 \end{equation*}
 Consider $\gamma^{j} : [0,1]\to \mathbb{C}^n $ defined by setting 
 \[
 \gamma^{j}(t) = \gamma(t) + (q^{j} - q^{0}) t. 
 \]
It follows that $\gamma^{j}: [0,1] \rightarrow \Omega^j $ for $j$ large, $\gamma^{j}(0) = z^{0}$ and $\gamma^{j}(1) = q^{j}$ and both $\gamma^{j} \rightarrow \gamma$ and $\dot{\gamma}^{j}\rightarrow \dot{\gamma}$ uniformly on $[0,1]$. In this setting, Lemma \ref{L3} implies that 
 \begin{align*}
     d^b_{\Omega^{j}}(z^{0}, q^{j})  
     \leq \int^{1}_{0} b_{\Omega^{j}}(\gamma^{j}(t), \dot{\gamma}^{j}(t)) dt
     \leq  \int^{1}_{0} b_{\Omega_{\infty}}(\gamma(t), \dot{\gamma}(t)) dt + \epsilon/2 
     < d^b_{\Omega_{\infty}}(z^{0}, q^{0}) + \epsilon,
 \end{align*}
 for all $ j $ large. Therefore,
 \begin{equation} \label{E36}
 \limsup_{j \rightarrow \infty} {d^b_{\Omega^{j}}(z^{0}, q^{j})} \leq d^b_{\Omega_{\infty}}(z^{0}, q^{0}).
 \end{equation}
 It remains to verify that
 \begin{equation}\label{E37}
 d^b_{\Omega_{\infty}}(z^{0},q^{0}) \leq \liminf_{j \rightarrow \infty} {d_{\Omega^{j}}(z^{0}, q^{j})},
\end{equation}
which together with (\ref{E36}) will give
\[
\lim_{j\to\infty} d^b_{\Omega^{j}}(z^{0}, q^{j}) = d^b_{\Omega_{\infty}}(z^{0}, q^{0}),
\]
thereby contradicting $(\ref{eqn:E})$ and proving the desired result.

To establish (\ref{E37}), first recall that each $ \Omega^{j} $ is Bergman complete by assumption. Hence, there are geodesics $ \sigma^{j}: [0,1] \rightarrow \Omega^{j}$ joining $z^0 $ and $q^j$, i.e., $ \sigma^{j}(0) = z^{0}$, $ \sigma^{j}(1) = q^{j} $ and 
\begin{equation} \label{E40}
\int^{1}_{0} b_{\Omega^{j}}(\sigma^{j}(t), \dot{\sigma}^{j}(t)) dt = d^b_{\Omega^{j}}(z^{0}, q^{j}). 
\end{equation}
 Setting $X^{j} = \dot{\sigma}^{j}(0)$,  we first show that $\{X^{j}\}$ is a bounded subset of $\mathbb{R}^{2n}$. Suppose not, then there is a subsequence (which we will denote by the same symbols) such that $ |{X^{j}}|\to \infty$, where $ |{\cdot}| $ denotes the Euclidean norm on $ \mathbb{R}^{2n} $. Consider another norm %$\norm{\cdot}_{z^{0}, \Omega_{\infty}}$ on $ \mathbb{R}^{2n}$ defined by setting
 %\[
 %\norm{X}_{z^{0}, \Omega_{\infty}} = b_{\Omega_{\infty}}(z^{0}, X) = \big(\sum_{\mu, \nu}\tilde{g}_{\mu \nu}^{\Omega_{\infty}}(z^{0}) X_{\mu}X_{\nu}\big)^{1/2} 
 %\]
%for $ X \in \mathbb{R}^{2n}$. Similarily, another 
norm $\norm{\cdot}_{z^{0}, \Omega^{j}}$ can be given on $ \mathbb{R}^{2n} $ as
follows
\[
\norm{X}_{z^{0}, \Omega^{j}} = b_{\Omega^{j}}(z^{0}, X) = \big(\sum_{\mu, \nu}\tilde{g}^{\Omega^j}_{\mu \nu}(z^{0}) X_{\mu}X_{\nu}\big)^{1/2} 
\]
for $ X \in \mathbb{R}^{2n}$ and $ j \in \mathbb{N} $. Note that 
 \begin{align*}
\norm{X^{j}}^{2}_{z^{0}, \Omega^{j}} 
= X^{j} \mathbf{G}^{j} (X^{j})^{T} 
= X^{j} \mathbf{G} (X^{j})^{T} + X^{j} (\mathbf{G}^{j} - \mathbf{G}) (X^{j})^{T} 
 \end{align*}
 where $ \textbf{G}^{j} $ and $ \textbf{G} $ are the $ 2n \times 2n $ matrices $ \left( \tilde{g}_{\mu \nu}^{\Omega^{j}}(z^0) \right) $  and $ \left( \tilde{g}_{\mu \nu}^{\Omega_{\infty}}(z^0) \right) $ respectively. Next, the positivity of $ b_{\Omega_{\infty}} $ guarantees an $ \lambda_0 > 0 $ such that 
 \begin{align*}
\norm{X^{j}}^{2}_{z^{0}, \Omega^{j}} 
\geq \lambda_0 |{X^{j}}|^{2} - \norm{ \mathbf{G}^{j} - \mathbf{G}} |{X^{j}}|^{2}. 
 \end{align*}
The above observation together with Corollary \ref{C1} implies that 
 \[
\norm{X^{j}}^{2}_{z^{0}, \Omega^{j}} \geq  \lambda_0/2 | {X^{j}} | ^{2}
 \]
for all $ j$ large. In particular, it follows that $\norm{X^{j}}_{z^{0},\Omega^{j}}\to \infty$ as well. Writing 
\[
X^{j} = s^{j} V^{j},
\]
where $ \norm{V^{j}}_{z^{0},\Omega^{j}} = 1$ and $ s^{j} = \norm{X^{j}}_{z^{0},\Omega^{j}}\to\infty$. Denote by $ C_{{X}^{j}}$, the maximal geodesic in $ \Omega^{j}$ starting from $ z^{0}$ with $\dot{C}_{{X}^{j}}(0) = X^{j}$. Since each $ \Omega^j $ is  Bergman complete, it follows that $ C_{{X}^{j}}(t) $ is defined for all values of the parameter $ t \in \mathbb{R} $. In particular, 
\begin{equation} \label{E38}
\sigma^{j}(t) = C_{X^{j}}(t),
\end{equation}
for all $ t \in [0,1] $. Moreover, since $ C_{X^j} $ is a geodesic, it is immediate that 
$ b_{\Omega^j} \left( C_{X^j}(t), \dot{C}_{{X}^{j}} (t)\right)  $ is constant for all $ t \in \mathbb{R} $. As a consequence,
\begin{equation*}
 b_{\Omega^j} \left( C_{X^j}(t), \dot{C}_{{X}^{j}} (t)\right) = b_{\Omega^j} \left( C_{X^j}(0), \dot{C}_{{X}^{j}} (0)\right) = b_{\Omega^j} \left( z^0, X^j \right) 
 \end{equation*}
for $ t \in \mathbb{R} $ and $j \in \mathbb{N} $. Further, since the Bergman metric $ b_{\Omega^j} ( z^0, \cdot ) $ is homogeneous, 
it follows that
\begin{equation*}
 b_{\Omega^j} \left( z^0, X^j \right) = b_{\Omega^j} \left( z^0, s^j V^j \right)
 = s^j b_{\Omega^j} \left( z^0, V^j \right) = s^j,
\end{equation*}
as $ b_{\Omega^j} \left( z^0, V^j \right) = 1 $ by construction. Hence,
\begin{equation} \label{E41}
 b_{\Omega^j} \left( C_{X^j}(t), \dot{C}_{{X}^{j}} (t)\right) = s^j
\end{equation}
for $ t \in \mathbb{R} $ and $j \in \mathbb{N} $. 
Now, combining equations (\ref{E40}), (\ref{E38}) and (\ref{E41}) yields that
 \begin{align*}
 d^b_{\Omega^{j}}(z^{0}, p^{j}) = \int^{1}_{0} b_{\Omega^{j}}(\sigma^{j}(t), \dot{\sigma}^{j}(t)) dt 
 = \int^{1}_{0} b_{\Omega^{j}} \left( C_{X^{j}}(t), \dot{C}_{X^{j}}(t) \right) dt
 = \int^{1}_{0} s^j dt = s^j,
 \end{align*}
 which, in conjunction with (\ref{E36}), implies that
 \begin{equation*}
 s^{j} = d^b_{\Omega^{j}}(z^{0}, q^{j}) \leq d^b_{\Omega_{\infty}}(z^{0}, q^0 ) + \epsilon 
 \end{equation*}
 for $j$ large. This is a contradiction  since $ \{ s^{j} \}$ is unbounded. Hence, $\{X^{j}\}$ is a bounded subset of $ \mathbb{R}^{2n}$. 
 
 By choosing a subsequence, which we again denote by the same symbols, we may assume that $ X^{j} \to X $ for some $ X \in \mathbb{R}^{2n}$. Let $t \longmapsto \left( \sigma_0(t) , \dot{\sigma_0} (t) \right)$ be the unique solution of the first order system
\begin{eqnarray} \label{E42}
\begin{cases}
 \displaystyle\frac{d x_{\eta}}{ dt} = y_{\eta},  \; & \left( x_1(0), \ldots, x_{2n}(0) \right)  = z^0, 
 \\
  \displaystyle\frac{d y_{\eta}}{ dt} = - \sum_{\mu,\nu} \Gamma_{\mu \nu}^{\eta, \Omega_{\infty}} y_{\mu} y_{\nu}, \; & \left( y_1(0), \ldots, y_{2n}(0) \right)  = X,
 \end{cases}
 \end{eqnarray}
on an interval around the origin, for $ \eta = 1, \ldots, 2n $. Note that the right hand side of the differential equation (\ref{E42}) satisfies a Lipschitz condition with respect to $ (x_1, \ldots, x_{2n}, y_1, \ldots, y_{2n}) $ on a set containing $ {\sigma_0}([0,1]) \times \dot{\sigma_0}([0,1])$ because of the continuity of the derivatives of the Christoffel symbols, thereby rendering the uniqueness of the solution of (\ref{E42}). 
Further, $ \sigma_0 $ satisfies the second order system
\begin{equation*}
 \frac{d^2 x_{\eta}}{ dt^2} + \sum_{\mu,\nu} \Gamma_{\mu \nu}^{\eta, \Omega_{\infty}} \frac{d x_{\mu}}{dt}\frac{d x_{\nu}}{dt} = 0,
 \end{equation*}
for $ \eta = 1, \ldots, 2n $, with $ \sigma_0(0) = z^{0}$ and $ \dot{\sigma}_{0}(0) = X $, i.e., $\sigma_0 $ is a geodesic in $ \Omega_{\infty} $ starting at $ z^{0}$ with initial velocity $X$. Moreover, since $ ( \Omega_{\infty}, d^b_{\Omega_{\infty}} ) $ is 
complete, the geodesic $ \sigma_0 $ starting from $ z^0 $ is defined for all values of the parameter $ t \in \mathbb{R} $. 

Note that the geodesics $ \sigma^j $, as defined by (\ref{E40}), satisfy the first order system
\begin{eqnarray} \label{E43}
\begin{cases}
 \displaystyle\frac{d x_{\eta}}{ dt} = y_{\eta}, \; & \left( x_1(0), \ldots, x_{2n}(0) \right)  = z^0, \\  
 \displaystyle\frac{d y_{\eta}}{ dt} = - \sum_{\mu,\nu} \Gamma_{\mu \nu}^{\eta, \Omega^j} y_{\mu} y_{\nu},  \; & \left( y_1(0), \ldots, y_{2n}(0) \right)  = X^j.
 \end{cases}
 \end{eqnarray}
for $ \eta = 1, \ldots, 2n $. 

Next we invoke the continuous dependence of solution of an initial value problem on the right hand side of the differential equation and on the initial data to deduce that $\sigma^{j}\to \sigma_0 $ and $\dot{\sigma}^{j}\to\dot{\sigma_0}$ uniformly on $[0,1]$. At this stage, recall that $ X^j \rightarrow X $ by construction and $ \Gamma^{\eta, \Omega^j}_{\mu \nu} \rightarrow \Gamma^{\eta, \Omega_{\infty}}_{\mu \nu} $, for $ \eta = 1, \ldots, 2n $, as $ j \rightarrow \infty $ courtesy Lemma \ref{L4}. Now, applying the Theorem on continuous dependence (see, for instance, Theorem VI, Section 12, Chapter III of \cite{Walter}), it follows that every solution of the \textit{perturbed} initial value problem (\ref{E43}) \textit{stays} near the unique solution of the initial value problem (\ref{E42}) on $ [0,1] $. In particular,
\begin{equation*}
 \sigma^j \rightarrow \sigma_0 \mbox{ and } \dot{\sigma}^j \rightarrow \dot{\sigma}_0
\end{equation*} 
on $ [0,1] $ as $ j \rightarrow \infty $. Moreover, the convergence is uniform across the entire interval $ [0, 1] $ follows from the estimates (see, for instance, Theorem V, Section 12, Chapter III of \cite{Walter}) known from the theory of ordinary differential equations. 

Finally, the above observation together with Lemma \ref{L3} ensures that 
\begin{equation} \label{E44}
 b_{\Omega^j}(\sigma^j(t),\dot{\sigma}^j(t)) \rightarrow b_{\Omega_{\infty}}(\sigma_0(t),\dot{\sigma}_0(t))
\end{equation}
holds uniformly in $t $ on $[0,1]$.  On the other hand, since
\[
\sigma_0(1) = \lim_{j\to\infty} \sigma^{j}(1) = \lim_{j\to\infty} q^{j} = q^{0}, 
\]
 it follows that $ \sigma_0 $ is a $C^1$-smooth curve in $ \Omega_{\infty} $ joining $ z^{0}$ and $ q^{0}$. Hence,
 \[
 d^b_{\Omega_{\infty}}(z^{0},q^{0}) \leq \int^{1}_{0} b_{\Omega_{\infty}}(\sigma_0(t),\dot{\sigma}_0(t)) dt \leq \int^{1}_{0} b_{\Omega^{j}}(\sigma^{j}(t),\dot{\sigma}^{j}(t)) dt + \epsilon
= d^b_{\Omega^{j}}(z^{0}, q^{j}) + \ep, 
 \]
 for all $ j $ large. The second inequality above follows from (\ref{E44}) and the last one is immediate from (\ref{E40}). As a consequence, 
\[
   d^b_{\Omega_{\infty}}(z^{0},q^{0}) \leq \liminf_{j \rightarrow \infty} {d^b_{\Omega^{j}}(z^{0}, q^{j})}, 
\]
as desired. This completes the proof.
\qed

\begin{cor}\label{L6} Let $ \Omega^j, \Omega_{\infty} $ be domains in $ \mathbb{C}^n $ satisfying the hypothesis of Theorem \ref{L5}. Let $ z^0 \in \Omega_{\infty} $ and $ R > 0 $ be fixed. Then, for any sequence of points $ z^j \rightarrow z^0 $, the Bergman balls $ B_{\Omega^{j}}(z^{j},R )$ converge to $B_{\Omega_{\infty}}(z^{0},R) $ in the local Hausdorff sense. Moreover, for $ \epsilon > 0$, the inclusions $ B_{\Omega_{\infty}}(z^{0},R)\subset B_{\Omega^{j}}(z^{j}, R + \ep)$ and $ B_{\Omega^{j}}(z^{j},R - \ep)\subset B_{\Omega_{\infty}}(z^{0},R)$ hold for $j$ large.
\end{cor}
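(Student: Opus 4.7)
The plan is to combine Theorem \ref{L5} with the geodesic analysis from its proof to sandwich $B_{\Omega^j}(z^j, R)$ between concentric $B_{\Omega_\infty}(z^0, \cdot)$ balls. I first promote Theorem \ref{L5} to a uniform statement with a moving basepoint: since $z^j \to z^0$ lies eventually in any compact neighborhood of $z^0$ in $\Omega_\infty$, the uniform convergence in Theorem \ref{L5} yields $d^b_{\Omega^j}(z^0, z^j) \to d^b_{\Omega_\infty}(z^0, z^0) = 0$, and the triangle inequality gives
\[
|d^b_{\Omega^j}(z^j, w) - d^b_{\Omega_\infty}(z^0, w)| \leq d^b_{\Omega^j}(z^j, z^0) + |d^b_{\Omega^j}(z^0, w) - d^b_{\Omega_\infty}(z^0, w)| \to 0
\]
uniformly on compact subsets of $\Omega_\infty$.

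For the inclusion $B_{\Omega_\infty}(z^0, R) \subset B_{\Omega^j}(z^j, R + \ep)$, applying the Hopf--Rinow theorem to the complete Riemannian manifold $(\Omega_\infty, b_{\Omega_\infty})$ shows that the closed Bergman ball $\overline{B_{\Omega_\infty}(z^0, R)}$ is a compact subset of $\Omega_\infty$, and the local Hausdorff convergence $\Omega^j \to \Omega_\infty$ places it inside $\Omega^j$ for $j$ large. The uniform convergence of the first step then gives $d^b_{\Omega^j}(z^j, w) < d^b_{\Omega_\infty}(z^0, w) + \ep < R + \ep$ throughout this closed ball for $j$ large.

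The reverse inclusion $B_{\Omega^j}(z^j, R - \ep) \subset B_{\Omega_\infty}(z^0, R)$ is the main obstacle: \emph{a priori} a point of this ball need not lie in $\Omega_\infty$, so the uniform convergence above does not directly apply. My plan is to first establish the auxiliary claim that $B_{\Omega^j}(z^j, R - \ep)$ is contained in a fixed compact subset of $\Omega_\infty$ for $j$ large. Arguing by contradiction, I suppose after passing to a subsequence there exist $w^j \in B_{\Omega^j}(z^j, R - \ep)$ escaping every compact subset of $\Omega_\infty$. Bergman completeness of $\Omega^j$ (Hopf--Rinow) supplies a minimizing Bergman geodesic $\sigma^j : [0, 1] \to \Omega^j$ from $z^j$ to $w^j$ with constant speed $\|\dot{\sigma}^j(0)\|_{z^j, \Omega^j} = d^b_{\Omega^j}(z^j, w^j) < R - \ep$. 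Corollary \ref{C1} furnishes a uniform Euclidean lower bound on the Bergman norm at $z^j$, so the initial velocities $X^j = \dot{\sigma}^j(0)$ form a bounded sequence in $\mathbb{R}^{2n}$; extracting a subsequence $X^j \to X$, the Bergman geodesic $\sigma_0$ in $\Omega_\infty$ with $\sigma_0(0) = z^0$, $\dot{\sigma}_0(0) = X$ is defined on $[0, 1]$ by Bergman completeness of $\Omega_\infty$. Lemma \ref{L4} together with the continuous dependence theorem for ODEs -- applied exactly as in the proof of Theorem \ref{L5} -- gives $\sigma^j \to \sigma_0$ uniformly on $[0, 1]$, so $w^j = \sigma^j(1) \to \sigma_0(1) \in \Omega_\infty$, contradicting the escape assumption. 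Granted the claim, every $w \in B_{\Omega^j}(z^j, R - \ep)$ lies in a fixed compact $K \subset \Omega_\infty$, and the uniform convergence from the first step yields $d^b_{\Omega_\infty}(z^0, w) < d^b_{\Omega^j}(z^j, w) + \ep/2 < R$ for $j$ large.

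Finally, combining the two inclusions (with $\ep$ replaced by smaller constants) sandwiches $B_{\Omega^j}(z^j, R)$ between $B_{\Omega_\infty}(z^0, R - \ep)$ and $B_{\Omega_\infty}(z^0, R + \ep)$ for $j$ large; since all these sets sit in a fixed compact subset of $\Omega_\infty$ where the Bergman and Euclidean topologies coincide (and the two metrics are bi-Lipschitz comparable), local Hausdorff convergence of $B_{\Omega^j}(z^j, R)$ to $B_{\Omega_\infty}(z^0, R)$ in the Euclidean sense follows.
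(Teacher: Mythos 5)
Your proposal is correct, but the route you take for the reverse inclusion $B_{\Omega^j}(z^j, R-\ep)\subset B_{\Omega_\infty}(z^0, R)$ diverges substantially from the paper's. The first step (promoting Theorem \ref{L5} to a moving basepoint via the triangle inequality) and the first inclusion (pulling the relatively compact closed Bergman ball $\overline{B_{\Omega_\infty}(z^0,R)}$ into $\Omega^j$ and applying the uniform bound) coincide with the paper's argument. For the reverse inclusion, you correctly identify that a priori a point of $B_{\Omega^j}(z^j, R-\ep)$ need not lie in $\Omega_\infty$, and you close the gap by re-running the geodesic machinery from the proof of Theorem \ref{L5}: you derive a uniform compact containment of $B_{\Omega^j}(z^j, R-\ep)$ in $\Omega_\infty$ via the boundedness of the initial velocities $X^j$, continuous dependence, and Bergman completeness. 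The paper instead avoids the geodesic analysis entirely with a shorter contradiction argument: if the inclusion fails, then since $z^j\in B_{\Omega_\infty}(z^0,R)$ for $j$ large and the connected set $B_{\Omega^j}(z^j, R-\ep_0)$ exits $B_{\Omega_\infty}(z^0,R)$, one can select points $a^j\in B_{\Omega^j}(z^j, R-\ep_0)$ on the \emph{fixed compact set} $\partial B_{\Omega_\infty}(z^0,R)\subset\Omega_\infty$, pass to a limit $a^0\in\partial B_{\Omega_\infty}(z^0,R)$, and apply the uniform convergence there to get $d^b_{\Omega_\infty}(z^0,a^0)\le R-\ep_0<R$, a contradiction. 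Your argument is sound and establishes a genuinely stronger conclusion (compact absorption of the balls), but at the cost of re-deriving the ODE stability estimates; the paper's connectedness-plus-compact-boundary trick sidesteps the $a\ priori$ location problem you flag and is noticeably more economical. You also prove the two inclusions first and derive local Hausdorff convergence as a corollary, whereas the paper establishes Hausdorff convergence directly on compact pieces of the balls before proving the two inclusions; this difference in ordering is inessential.
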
 
 
\begin{proof} Let $ z^j $ be a sequence of points converging to $ z^0 \in \Omega_{\infty} $. Then 
\begin{equation} \label{E45}
d^b_{\Omega^{j}}(z^{j},\cdot) \to d^b_{\Omega_{\infty}}(z^{0},\cdot)
\end{equation}
uniformly on compact subsets of $ \Omega_{\infty}$. Indeed, for all $ w $ in a fixed compact subset $ S $ of $ \Omega_{\infty} $,
\begin{equation*}
 | d^b_{\Omega^{j}}(z^{j},w) - d^b_{\Omega_{\infty}}(z^{0}, w) | \leq  | d^b_{\Omega^{j}}(z^{j},w) - d^b_{\Omega^j}(z^{0}, w) | + | d^b_{\Omega^{j}}(z^0,w) - d^b_{\Omega_{\infty}}(z^{0}, w) |,
\end{equation*}
where the first summand
\begin{eqnarray*}
 | d^b_{\Omega^{j}}(z^{j},w) - d^b_{\Omega^j}(z^{0}, w) | \leq d^b_{\Omega^j} (z^j, z^0) \rightarrow d_{\Omega_{\infty}}(z^0, z^0) =0, 
\end{eqnarray*}
and the second summand $ | d^b_{\Omega^{j}}(z^0,w) - d^b_{\Omega_{\infty}}(z^{0}, w) | \rightarrow 0 $ for all $j $ large courtesy Lemma \ref{L5}.

Next, to establish the convergence
\begin{equation} \label{E46}
B_{\Omega^{j}}(z^{j},R ) \rightarrow B_{\Omega_{\infty}}(z^{0},R), 
\end{equation}
consider a compact subset $ L $ of $ B_{\Omega_{\infty}}(z^{0},R) $. It follows that $ L $ is relatively compactly contained in $ \Omega^j $ for all $ j $ large and
\begin{equation*}
 d^b_{\Omega_{\infty}} (z^0, w) < C, 
\end{equation*}
for  $ w \in L $ and for some $ C = C(L) \in (0, R) $. Further, it follows from (\ref{E45}) that for $ C^* \in (C, R) $, 
\begin{equation*}
d^b_{\Omega^{j}}(z^j,w) < d^b_{\Omega_{\infty}}(z^0,w) + C^* - C,
\end{equation*}
and, consequently that 
\begin{equation*}
d^b_{\Omega^{j}}(z^j,w) < C^* < R,
\end{equation*}
for $ w \in L $ and $ j $ large . This exactly means that $ L $ is compactly contained in $ B_{\Omega^{j}}(z^{j},R )  $ for all $ j $ large. Conversely, let $ L $ be a compact subset of $ \mathbb{C}^n$ such that $L$ is contained in the interior of $\bigcap_{j\geq j_0} B_{\Omega^{j}}(z^j, R)$ for some $j_0$. It is immediate that $L $ is compactly contained in $ \Omega_{\infty}$ and there exists $ {C}^{**} = {C}^{**}(L) \in (0, R) $ with the property that
\begin{equation*}
d^b_{ \Omega^{j}}(z^{j}, w)  < {C}^{**}
\end{equation*}
for $w \in L$ and $j$ large. Again invoking (\ref{E45}), it follows that
for $ C^{***} \in ( C^{**}, R) $, 
\[
d^b_{ \Omega_{\infty}}(z^{0}, w) < d^b_{\Omega^{j}}(z^{j},w) + {C}^{***} - C^{**}                                                                         
\]
for all $w \in L $ and $j$ large. As a consequence,
\[
d^b_{\Omega_{\infty}}(z^{0}, w) < C^{***} < R 
\]
for $ w \in L $, or equivalently that $ L $ is compactly contained in $B_{ \Omega_{\infty}}(z^{0}, R)$. This assertion verifies (\ref{E46}).

Next, note that $B_{\Omega_{\infty}}(z^{0}, R)$ is relatively compact in $ \Omega_{\infty}$ since $ \Omega_{\infty}$ is Bergman complete. So, for $ \epsilon > 0 $ fixed, (\ref{E45}) yields that
\begin{equation*}
d^b_{\Omega^{j}}(z^{j},w) < d^b_{\Omega_{\infty}}(z^{0},w) + \epsilon 
\end{equation*}
holds for all $w \in B_{\Omega_{\infty}}(z^{0},R)$ and $j$ large. This is just the assertion
\[
B_{\Omega_{\infty}}(z^{0},R) \subset B_{\Omega^{j}}(z^{j},R + \ep),
\]
for $ j $ large. Finally, suppose that the balls $ B_{\Omega^{j}}(z^{j}, R - \ep) $ are not contained in $ B_{\Omega_{\infty}}(z^{0},R)$  for $ j $ large.  
Then there is an $\ep_{0} > 0$ and points $a^{j}\in B_{\Omega^{j}}(z^{j}, R - \ep_{0})$ such that $ a^{j} $ are on the boundary of $ B_{\Omega_{\infty}}(z^{0}, R)$. Using the compactness of $\partial B_{\Omega_{\infty}}(z^{0}, R)$, we may assume that $a^{j}\to a^0 \in \partial B_{\Omega_{\infty}}(z^{0}, R)$. In this setting, it follows from lemma (\ref{E45}) that 
\[
d^b_{\Omega^{j}}(z^{j}, a^j)\to d^b_{\Omega_{\infty}}(z^0, a^{0}), 
\]
which, in turn, implies that $ d^b_{\Omega_{\infty}}(z^0, a^{0})\leq R - \ep_{0}$. This contradicts the fact that $ a^0 \in \partial B_{\Omega_{\infty}}(z^{0}, R)$, thereby completing the proof.
 
\end{proof}

\section{Proof of Theorem \ref{T0}(i) - Levi corank one domain $ D $} 

The core of our proof comprises the verification that $ D $ as in Theorem \ref{T0}(i) satisfies the conditions stated in Theorem \ref{L5}. 

\subsection{Completeness of $ (D_{\infty}, d^b_{D_{\infty}}) $:} \label{SS1}

When $ \Omega \subset \mathbb{C}^n $ is a Levi corank one domain and $ p^j $ is any \textit{arbitrary} sequence converging to $ p^0 \in \partial \Omega $, refer \cite{Cho1}, \cite{Cho2}, \cite{TT} to see that the scaling technique applies. I.e. there are biholomorphic mappings $A^j: \Omega \to \Omega^j$ such that $ \Omega^j$ converge in the local Hausdorff sense to a polynomial domain 
\begin{equation} \label{E50}
 \Omega_{\infty}=\Big\{ z \in \mathbb{C}^n : 2 \Re z_1 + P_{\infty} \left(z_n, \overline{z}_n \right) + \sum_{j=2}^{n-1} \vert z_j \vert^2 < 0 \Big\},
\end{equation}
where $ P_{\infty} \left(z_n, \overline{z}_n \right) $ is a real-valued subharmonic polynomial of degree at most $ 2m $ without any harmonic terms, $ 2m $ being the 1-type of $ \partial \Omega $ at $ p^0 $ ($m\geq 1$ is a positive integer). In order to assert that $ \Omega_{\infty} $ is Bergman complete, we first need to show that $ K_{\Omega_{\infty}} $ is non-vanishing along the diagonal. 

To prove that $ K_{\Omega_{\infty}} (z, z) \neq 0 $, firstly note that $ P_{\infty} $, being a subharmonic polynomial, is of even degree, say $ 2k $. Now, let $ P_{2k} $ denote the homogeneous part of $ P_{\infty} $ of degree $ 2k $ and set $ L = P_{\infty} - P_{2k} $. Then $ P_{2k} $ is a homogeneous subharmonic polynomial of degree $ 2k $ without any harmonic terms. In this setting, \cite{BF} guarantees the existence of an $ \epsilon > 0 $ and a nowhere vanishing holomorphic peak function 
\begin{equation*}
f \in \mathcal{O}({\Omega^*}), \; \Omega^*= \left\{ (w_1,w_2) \in \mathbb{C}^2: 2 \Re w_1 + P_{2k} \left(w_2, \overline{w}_2 \right) < \epsilon \left(|w_1| + |w_2|^{2k}\right) \right\}
\end{equation*}
at the origin for $ \mathcal{O}({\Omega^*}) $, that is continuous on $ \overline{\Omega^*} $ with exponential decay at infinity. Among other things, it was also shown in  \cite{BF} that
\begin{equation} \label{E48}
 \exp \left( -C_1 \big(|w_1| + |w_2|^{2k}\big)^{1/N} \right)  \leq |f(w_1, w_2)| \leq \exp \left( -C_2 \big(|w_1| + |w_2|^{2k} \big)^{1/N} \right)
\end{equation}
holds for some positive constants $ C_1, C_2 $, some integer $ N $ and for all $ (w_1, w_2) \in \overline{\Omega^*} $. 

Further, note that as the degree of the polynomial $ L $ is stricly less than $ 2k $, it is immediate that $ L (z_n, \overline{z}_n) < \epsilon |z_n|^{2k} $ for $ |z_n| $ large, say $ |z_n| > R_{\epsilon} $. On the other hand, let $ 2 c $ be an upper bound for $ L (z_n, \overline{z}_n) $ on the compact disc $ |z_n| \leq R_{\epsilon} $. Therefore,
\begin{equation} \label{E47}
| L(z_n, \overline{z}_n) |  < \epsilon |z_n|^{2k} + 2c, 
\end{equation}
holds for all $ z_n \in \mathbb{C} $. 

To emulate this construction for $ \Omega_{\infty} $, consider the projection $ \Pi: \mathbb{C}^n \rightarrow \mathbb{C}^2 $ defined by $ \Pi (z_1, z_2, \ldots, z_n) = (z_1, z_n) $. Observe that for the positive constant $ c $ as chosen in (\ref{E47}),  
the translation $ T_c: (z_1, z_n) \longmapsto (z_1-c, z_n) $ maps $ \Pi(\Omega_{\infty})= \{ (z_1, z_n) \in \mathbb{C}^2 : 2 \Re z_1 + P_{\infty} (z_n, \overline{z}_n) < 0 \} $ into $ \Omega^* $. Indeed, every $ ( z_1,z_n ) \in \Pi(\Omega_{\infty}) $ satisfies
\begin{equation*}
 2 \Re z_1 + P_{\infty} (z_n, \overline{z}_n) < 0,
\end{equation*}
or equivalently
\begin{equation*}
2 \Re z_1 + P_{2k} (z_n, \overline{z}_n) + L (z_n, \overline{z}_n) < 0,  
\end{equation*}
which implies that
\begin{equation*}
2 \Re z_1 + P_{2k} (z_n, \overline{z}_n) < - L (z_n, \overline{z}_n) < \epsilon |z_n|^{2k} + 2c,
\end{equation*}
using (\ref{E47}). The above inequality can be rewritten as
\begin{equation*}
2 \Re (z_1 - c) + P_{2k} (z_n, \overline{z}_n) 
< \epsilon |z_n|^{2k} <  \epsilon \big(|z_1 - c| + |z_n|^{2k}\big),
\end{equation*}
which is just the assertion that $ T_c \circ \Pi(\Omega_{\infty}) \subset \Omega^* $. Hence, $ \tilde{f} = f \circ T_c \circ \Pi $ is a nowhere vanishing holomorphic function on $ \Omega_{\infty} $. Moreover, it follows from the polynomial inequality defining $ \Omega_{\infty} $ (refer (\ref{E50})) that $ \max\{ |z_1, |z_n| \} \rightarrow \infty $ on $ \Omega_{\infty} $ as $ z $ tends to   infinity from within $ \Omega_{\infty} $. As a consequence of the above observation and (\ref{E48}), $ \tilde{f} $ also has exponential decay at infinity. Furthermore, $ \tilde{f} $ is square integrable on $ \Omega_{\infty} $ since the domain $ \Omega_{\infty} $ is defined by a polynomial inequality. Therefore, the Bergman kernel $ K_{\Omega_{\infty}} $ is nowhere vanishing along the diagonal. 

Before going further, it is worthwhile noting that 
\begin{equation} \label{E49}
 \Omega_{\infty} \subset \Omega^{**} = \Big\{ z \in \mathbb{C}^n : 2 \Re (z_1 - c) + P_{2k} (z_n, \overline{z}_n)  + \sum_{j=2}^{n-1} \vert z_j \vert^2
<  \epsilon \big(|z_1 - c| + |z_n|^{2k}\big) \Big\},
\end{equation}
and that $ \Omega^{**} $ admits a (global) holomorphic peak function $ f^*$ at $ (c,0') \in \partial \Omega^{**} $ due to \cite{BF}. Moreover, $ f^{*} $ is nowhere zero and decays exponentially fast at infinity.

It remains to show that $ \Omega_{\infty} $ is Bergman complete. 
Since the Carath\'{e}odory metric is smaller than the Bergman metric and $ K_{\Omega_{\infty}} $ is non-vanishing along the diagonal, it suffices to show (see Theorem 5.1 of \cite{AGK}) that $ \Omega_{\infty} $ is complete with respect to the inner Carath\'{e}odory distance. Recall that $ \Omega_{\infty} $ equipped with the inner Carath\'{e}odory distance is complete if it supports (global) holomorphic peak functions at every boundary point. The following technical theorem from \cite{AGK} will be useful to obtain the peak functions for the unbounded domain $ \Omega_{\infty} $.

\begin{thm} \label{T3}
Let $ \Omega $ be a domain in $ \mathbb{C}^n $. If $ p \in \partial \Omega $ satisfies the following two properties:
\begin{enumerate}
 \item [(i)] There exists an open neighbourhood $ V $ of $ p $ in $ \mathbb{C}^n $ and a holomorphic function $ g$ defined in an open neighbourhood of the closure of $ V \cap \Omega $ that is also a peak function at $ p $ for $ \mathcal{O}(V) $.
 
 \item [(ii)] There are constants $ r_1, r_2, r_3 $ with $ 0 < r_1 < r_2 < r_3 < 1 $ and $ B^n(p,r_3) \subset V $, and there exists a Stein neighbourhood $ U $ of $ \overline{ \Omega } $ and a function $ h \in \mathcal{O}( \Omega \cup V), h \neq 0 $ on $ V $ satisfying
 \begin{equation*}
\{ z \in V : g(z) = 1 \} \cap U \cap \overline{ B^n(p,r_2)} \setminus B^n(p,r_1) = \emptyset,
 \end{equation*}
and
\begin{equation*}
 |h(z)|^2 \leq C_0 \frac{\big( \min\{1, d(z, U) \}\big)^{2n} }{\big( 1 + \|z\|^2 \big)^2}, \; \mbox{for all} \; z \in \Omega,
\end{equation*}
for some positive constant $ C_0 $, 
\end{enumerate}
then $ \Omega $ admits a holomorphic peak function at $ p $.
\end{thm}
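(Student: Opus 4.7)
The plan is to adapt the standard cutoff-and-$\bar\partial$-correction construction of peak functions to the setting where $\Omega$ may be unbounded and only a Stein neighbourhood $U$ is available. The idea is to take the local peak function $g$ at $p$, glue it to $0$ via a smooth cutoff supported in the shell around $p$ where condition (ii) provides a definite gap between $g$ and $1$, and then correct the resulting non-holomorphic defect by a $\bar\partial$-solution on $U$ that vanishes to high order at $p$ and is pointwise small on $\overline{\Omega}$.

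More concretely, I would fix a smooth cutoff $\chi \in C_c^\infty(B^n(p, r_2))$ with $\chi \equiv 1$ on $\overline{B^n(p, r_1)}$, and compose $g$ with an auxiliary peaking map $\psi : \overline{\Delta} \to \overline{\Delta}$ satisfying $\psi(1) = 1$ and $|\psi(w)| < 1$ for $w \in \overline{\Delta} \setminus \{1\}$ (for instance $\psi(w) = (1+w)/2$). Setting $\Phi := \chi \cdot (\psi \circ g)$, extended by zero off $B^n(p, r_2)$, yields $\Phi(p) = 1$, $|\Phi| \leq 1$ on $\overline{\Omega}$ with strict inequality on $\overline{\Omega} \setminus \{p\}$, and a definite quantitative gap outside any fixed neighbourhood of $p$. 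The $(0,1)$-form $\alpha := \bar\partial \Phi$ is smooth and compactly supported in the annular shell $S := \overline{B^n(p, r_2)} \setminus B^n(p, r_1)$; the geometric condition $\{g = 1\} \cap U \cap S = \emptyset$ of hypothesis (ii) guarantees $\psi \circ g$ is bounded on $U \cap S$, so $\alpha$ extends to a smooth $\bar\partial$-closed $(0,1)$-form on the Stein manifold $U$.

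Next I would solve $\bar\partial u = \alpha$ on $U$ via H\"ormander's $L^2$ existence theorem with a plurisubharmonic weight of the form
\[
\varphi(z) := (2n + 2\epsilon) \log |z - p| - 2 \log |h(z)| + \eta(z),
\]
where $\epsilon > 0$ is small and $\eta$ is a smooth strictly plurisubharmonic function supplying the lower bound on the complex Hessian required by H\"ormander. The singular term $\log|z-p|$, combined with the fact that $u$ is holomorphic on a neighbourhood of $p$ (since $\alpha$ vanishes there), forces $u$ to vanish at $p$ to order exceeding $\epsilon$, so that $|u(z)| = O(|z-p|^{n+\epsilon})$ on $B^n(p, r_1)$, which is much smaller than $1 - \Re(\psi \circ g)(z)$ near $p$. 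The term $-2\log|h|$ is where hypothesis (ii) pays off: the decay $|h|^2 \leq C_0 (\min\{1, d(z,U)\})^{2n}/(1 + |z|^2)^2$ is precisely what is needed to convert the weighted $L^2$ bound on $u$ into a pointwise bound on $\overline{\Omega}$, via the sub-mean-value property applied to the holomorphic function $u$ (on the complement of the shell $S$) over Euclidean balls whose radii are comparable to $d(z, U)$; the power $2n$ matches the real dimension of $\mathbb{C}^n$, and the $(1 + |z|^2)^{-2}$ factor controls the situation at infinity on the possibly unbounded $U$. The holomorphic peak function is then $F := \Phi - u$.

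The main obstacle is this third step: calibrating the weight and the size of $\chi$ so that the resulting pointwise bound on $u$ is small enough to preserve the strict inequality $|F| < 1$ on $\overline{\Omega} \setminus \{p\}$. Away from $p$ this reduces to making $\|u\|_{L^\infty(\overline{\Omega})}$ smaller than the uniform gap $1 - \sup_{|z - p| \geq r_1} |\Phi(z)|$, while near $p$ it requires $u$ to vanish faster than $1 - \Re(\psi \circ g)$. The elaborate hypothesis (ii) is designed precisely to supply both controls simultaneously: the geometric nonintersection condition isolates the cutoff data $\alpha$ from the logarithmic pole at $p$, while the specific decay profile of $h$ has its exponents tuned to the real dimension and to the growth at infinity of $U$, so that the weighted $L^2$-to-$L^\infty$ transfer closes with matched constants. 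Once these compatibilities are checked, the cutoff construction and the $\bar\partial$-solve are routine.
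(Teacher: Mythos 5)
This statement is quoted verbatim from \cite{AGK} and invoked in the paper as a black box; the paper contains no proof of it, so there is no internal argument to compare against. Your outline nevertheless deserves scrutiny on its own terms, and while the overall framework (cutoff of the local peak function on the shell between $r_1$ and $r_2$, $\bar\partial$-correction via H\"ormander on the Stein neighbourhood $U$, $L^2$-to-pointwise transfer using the decay of $h$) is the right one, there are concrete errors in the sketch.

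The weight $\varphi = (2n+2\epsilon)\log|z-p| - 2\log|h| + \eta$ has the wrong sign on the $\log|h|$ term, and this is not cosmetic. Since $h$ is holomorphic and may vanish on a non-trivial analytic set in $\Omega \setminus V$, the function $-\log|h|$ is superharmonic with $+\infty$ poles on $\{h = 0\}$; no smooth $\eta$ can repair that, so $\varphi$ is not an admissible plurisubharmonic weight. Moreover, even formally, the $L^2$-to-$L^\infty$ transfer runs backward with your sign: $e^{-\varphi}$ then carries a factor $|h|^2$, which hypothesis (ii) makes small precisely near $\partial U$, so the weighted $L^2$ bound gives \emph{less} control on $u$ exactly where you need it, and the sub-mean-value step over a ball of radius $\sim d(z,U)$ produces a factor $\lambda(B)^{-1}\sup_B e^{\varphi}$ that blows up like $d(z,U)^{-4n}$. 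The term must enter as $+2\log|h|$: this is genuinely plurisubharmonic, $e^{-\varphi}$ then contains $|h|^{-2}$ which forces the H\"ormander solution to be small where $h$ is small, and the exponent $2n$ in the bound on $|h|^2$ exactly cancels the volume factor $\lambda(B)\sim d(z,U)^{2n}$, with $(1+\|z\|^2)^{-2}$ taming the behaviour at infinity. This calibration is the entire point of the way hypothesis (ii) is formulated, and your sketch inverts it.

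A second, smaller inaccuracy: a weight of order $(2n+2\epsilon)\log|z-p|$ forces only $u(p)=0$ (the integral $\int r^{-1-2\epsilon}\,dr$ diverges if $u(p)\ne 0$), not the vanishing $|u(z)|=O(|z-p|^{n+\epsilon})$ you assert. If the gap $1-\Re(\psi\circ g)$ vanishes at $p$ to higher than first order, a correspondingly larger logarithmic exponent is needed, and this interacts with the growth of $e^{\varphi}$ far from $p$, which is exactly the calibration you defer. More broadly, the paragraph ending with ``Once these compatibilities are checked, the cutoff construction and the $\bar\partial$-solve are routine'' concedes the part of the argument the theorem is actually about: the matched choice of exponents in the weight, the verification that $\alpha$ extends $\bar\partial$-closed across $U$, and the check that $|F|<1$ on all of $\overline{\Omega}\setminus\{p\}$ including at infinity. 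As it stands, the proposal is a plausible strategy statement with a sign error at its crux, not a proof.
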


Apply the above theorem to the domain $ \Omega_{\infty} $. Recall that each finite boundary point of $ \partial \Omega_{\infty} $ is a local holomorphic peak point. Let $ \Omega^{**} $ (as defined by (\ref{E49})) play the role of the Stein neighbourhood $ U $ and $ f^* $ play the role of the function $ h $.
Then Theorem \ref{T3} provides a global holomorphic peak function at each finite boundary point of $ \partial \Omega_{\infty} $. Since the point at infinity in $ \partial \Omega_{\infty} $ is a global holomorphic peak point for $ \Omega_{\infty} $ (refer Lemma 1 of \cite{BP}), it follows that $ (\Omega_{\infty}, d^b_{\Omega_{\infty}}) $ is complete.

In particular, if $ D $ is $ C^{\infty} $-smooth Levi corank one domain in $ \mathbb{C}^n $ as in Theorem \ref{T0} (i), then the above observation shows that the limiting domain $ D_{\infty} $ (obtained by scaling $ D $ along a sequence $ p^j \rightarrow p^0 \in \partial D $ from within a nontangential cone $ \Gamma $ having vertex at $ p^0 $) is Bergman complete. 

\medskip

\textit{Proof of Theorem \ref{T0}(i):} To understand the behaviour of $ h_{D}(p^{j}) $ as $ j \rightarrow \infty $, the following two scenarios need to be examined (after passing to a subsequence, if necessary): 

\begin{enumerate}
 \item [(a)] $ h_{D}(p^{j}) \rightarrow 0$, or
 
 \item [(b)] $ h_{D}(p^{j}) $ is bounded below by a positive constant $ C_0$.
\end{enumerate}

In Case (a), the Bergman completeness of $ D_{\infty} $, Corollary \ref{L6} together with arguments similar to those employed in Lemma 3.3 of \cite{BMV} show that $ D_{\infty} $ is biholomorphic to $ \mathbb{B}^n $. We present the outline of a proof here for the sake of completeness.

For $ \epsilon > 0 $ fixed, there is $ R_j > 0 $ such that
\[
 \frac{1}{R_j} < h_D(p^j) + \epsilon
\]
for each $j $. Since $ h_{D}(p^{j}) \rightarrow 0$, by hypothesis, it is immediate that $ R_j \rightarrow \infty $. Moreover, there are 
biholomorphic imbeddings $ F^{j}: \mathbb{B}^{n} \to D $ with the property that $ F^j(0) = p^j $ and $ B_D (p^{j}, R) \subset F^{j}(\mathbb{B}^{n}) $. Next, consider the mapppings 
\[
 \pi^{j} \circ F^{j}: \mathbb{B}^{n} \to D^j, 
\]
where $ \pi^j $ are automorphisms of $ \mathbb{C}^n $ associated with the sequence $ \{ p^j \} $ by the scaling method as defined in (\ref{E51}). Then $ \pi^{j}  \circ F^{j} (0) = \pi^j(p^{j}) = q^j \rightarrow q^0 = (-1, 0') \in D_{\infty} $. Applying Theorem 3.11 of \cite{TT}, it follows that $ \{ \pi^{j}  \circ F^{j} \} $ is a normal family. Hence, some subsequence of $ \{ \pi^{j}  \circ F^{j} \} $ (which will be denoted by the same symbols) converges uniformly on compact sets of $ \mathbb{B}^n $ to a holomorphic mapping $ \Phi: \mathbb{B}^n \rightarrow D_{\infty} $. It turns out that $ \Phi $ is a biholomorphism from $ \mathbb{B}^n $ onto $ D_{\infty} $. This can be seen by considering 
\[
 (\pi^{j} \circ F^{j})^{-1}: D^j \rightarrow \mathbb{B}^{n}, 
\]
which admits a subsequence converging uniformly on compact sets on compact subsets of $ D_{\infty} $ to holomorphic mapping $ \Psi: D_{\infty} \rightarrow \mathbb{B}^n $. It can be seen that $ \Phi $ and $ \Psi $ are inverses of each other, so that $ D_{\infty} $ is biholomorphic on $ \mathbb{B}^n $. In particular, $ h_{D_{\infty}}(\cdot) \equiv 0 $.

\medskip

In Case (b), the limiting domain $ D_{\infty} $ will not be biholomorphic to $ \mathbb{B}^n $ unlike case (a). Here, the arguments are 
similar to the ones in Theorem 1.2(ii) of \cite{BMV}. However, for clarity and completeness, we provide a proof here.

Since $ h_D(p^j) = h_{D^j}(q^j) $, we study the behaviour of $ h_{D^j}(q^j) $ as $ j \rightarrow \infty $. The goal is to show that $ h_{D^j}(q^j) 
\rightarrow h_{D_{\infty}} ( q^0 ) $. 

For $ \epsilon > 0 $ fixed, let $ R > 0 $ be such that
%\begin{equation*}
 $ 1/{R} < h_{D_{\infty}} (q^0) + \epsilon $
%\end{equation*}
and $ F : \mathbb{B}^n \rightarrow D_{\infty} $ be a biholomorphic imbedding with the property that $ F(0) = q^0 $ and $ B_{D_{\infty}} (q^0, R) \subset F(\mathbb{B}^n ) $. Let $ \delta > 0 $ be such that $ B_{D_{\infty}} (q^0, R - \epsilon) \subset F \big({B}^n (0, 1 -\delta) \big) \subset D^j $ for all $ j $ large. It then follows from Corollary \ref{L6} that
\begin{equation*}
B_{D^j} (q^j, R - 2 \epsilon)  \subset B_{D_{\infty}} (q^0, R - \epsilon) \subset F \big({B}^n (0, 1 -\delta) \big) \subset D^j,
\end{equation*}
which, in turn, implies that
\begin{equation*}
h_{D^j}(q^j) \leq \frac{1}{R - 2 \epsilon} 
\end{equation*}
for all $ j $ large. Hence,
\begin{equation} \label{E54}
\limsup_{j\rightarrow \infty} h_{D^j}(q^j) \leq h_{D_{\infty}} (q^0). 
\end{equation}
%In order to show that 
%\begin{equation*}
%h_{D_{\infty}} (q^0) \leq \liminf_{j\rightarrow \infty} h_{D^j}(q^j),  
%\end{equation*}
Next, consider biholomorphic imbeddings $ F^j : \mathbb{B}^n \rightarrow D^j $ and $ R_j > 0 $ such that $ F^j (0) = q^j $, $ B_{D^j}(q^j, R_j) \subset F^j(\mathbb{B}^n) $ and
%\begin{equation*}
$ 1/{R_j} < h_{D^j} (q^j) + \epsilon $. 
%\end{equation*}
We claim that $ F^j $ admits a convergent subsequence. 
To establish this, consider $ (\pi^j)^{-1} \circ F^j : \mathbb{B}^n \rightarrow D $ where $ \pi^j $ are the scalings associated with the sequence $ \{ p^j \} $ as in (\ref{E51}). Then $ (\pi^j)^{-1} \circ F^j (0) = p^j \rightarrow p^0 \in \partial D $. Now, owing to Theorem 3.11 of \cite{TT}, $ F^j $ is a normal family. It follows that the uniform limit $ F: \mathbb{B}^n \rightarrow D_{\infty} $ and $ F(0) = q^0 $. Further, it follows from (\ref{E54}) that 
\begin{equation*} 
 \frac{1}{R_j} < h_{D^j} (q^j) + \epsilon < h_{D_{\infty}} (q^0) + 2 \epsilon, 
\end{equation*}
for all $ j $ large. Furthermore, $ h_{D^j} (q^j) $ is bounded below by a positive constant $ C_0$ by assumption, and therefore, some subequence of $ R_j $ 
(which will be denoted by the same symbols) converges and its limit $ R_0 $ is a positive real number. It then follows from Corollary \ref{L6} that
\begin{equation*}
 B_{D_{\infty}} (q^0, R_0 - 2 \epsilon) \subset B_{D^j}(q^j, R_0 - \epsilon) \subset B_{D^j}(q^j, R_j) \subset F^j(\mathbb{B}^n), 
\end{equation*}
for all $ j $ large. This implies that $ B_{D_{\infty}} (q^0, R_0 - 2 \epsilon) \subset F^j(\mathbb{B}^n) $. Also, Hurwitz's theorem ensures that $ F $ is one-one. 

To summarize, $ F: \mathbb{B}^n \rightarrow D_{\infty} $ is a bihlolomorphic imbedding such that 
$ F (\mathbb{B}^n) \supset B_{D_{\infty}} (q^0, R_0 - 2 \epsilon) $. Hence, $ h_{D_{\infty}} (q^0) \leq 1/(R_0 - 2 \epsilon) $, or equivalently that
\begin{equation*} 
\liminf_{j\rightarrow \infty} h_{D^j}(q^j) \geq h_{D_{\infty}} (q^0). 
\end{equation*}
This completes the proof of Theorem \ref{T0} (i).

\qed

%%%%%%%%%%%%%%%%%%%%%%%%%%%%%%%%%%%%%%%%%%%%%%

\section{Proof of Theorem \ref{T0}(ii) - Strongly pseudoconvex domain $ D $}
   
If $ D $ is $ C^2 $-smooth strongly pseudoconvex domain in $ \mathbb{C}^n $ as in Theorem \ref{T0} (i), then the scaling technique introduced by S. Pinchuk (see \cite{Pinchuk}) provides biholomorphisms $ A^j: D \to D^j$ such that $ D^j $ converge in the local Hausdorff sense to an unbounded domain $ D_{\infty} \subset \mathbb{C}^n$ and $ q^j:= A^j(p^j) = (-1,0') \in D_{\infty}$. It is worthwhile mentioning the associated limiting domain $ D_{\infty} $ is uniquely determined. In fact,  
   \[
    D_{\infty} = \{ z \in \mathbb{C}^n : 2 \Re z_1 + \sum_{j=2}^n |z_j|^2 < 0 \} 
   \]
is biholomorphic to $ \mathbb{B}^n $. As a consequence, $ K_{D_{\infty}} $ is non-vanishing along the diagonal and $ (D_{\infty}, d^b_{D_{\infty}}) $ is complete and $ b_{D_{\infty}} $ is positive. 

Here, Lemma 2.1 of \cite{BBMV} provides the stability result for the Bergman Kernels $ K_{D^j} (\cdot, \cdot) $ analogous to Lemma \ref{L2} in the current setting.
%
%At this point, note that the proofs of Corollary \ref{C1}, Lemma \ref{L3}, Lemma \ref{L4}, Theorem \ref{L5} and Corollary \ref{L6} go through verbatim to yield the corresponding results in the current setting.
Hence, Theorem \ref{L4} and Corollary \ref{L6} applies
to yield the stability of the Bergman distances $ d^b_{D^j} $ and corresponding Bergman balls. Finally, use the facts that $ h_D(p^j) = h_{D^j}\big((-1,0') \big) $ and that the Bergman ball $ B_{D_{\infty}} \big( (-1,0'), R \big) $ for every $ R > 0 $ is biholomorphic to $ \mathbb{B}^n $ to conclude that $ h_D(p^j ) \rightarrow 0 $. The proof of this last statement proceeds exactly as that of Theorem 4.1 of \cite{MV} and is therefore omitted.

\section{Proof of Theorem \ref{T0}(iii) - Convex domain $ D $}

If $ D $ is a $ C^{\infty} $-smooth convex domain as in Theorem \ref{T0} (iii), then there are two cases to be considered:

\begin{enumerate}
 \item [(a)] $ \partial D $ is of finite type near $ p^0 $, or
 
 \item [(b)] $ \partial D $ is of infinite type near $ p^0 $.
\end{enumerate}

In Case (a), according to J. McNeal (\cite{McNeal}), T. Hefer (\cite{Hefer}) (also, refer \cite{NPT}), there exist biholomorphisms $ A^j: D \to D^j$, where $ D_j $ are convex domains that converge in the local Hausdorff sense to 
\[
D_0 = \Big \{ z \in \mathbb{C}^n : 
\Re \Big ( \displaystyle\sum_{k=1}^n b_k z_k  \Big) + P(z', \overline{z}') < 1
\Big \},
\]
$ b_k $ are complex numbers and $ P$ is a real convex
polynomial of degree less than or equal to $2m$ ($ 2m $ being the $1$-type of $ \partial D $ at $ p^0 $). Also, $ q^j:= A^j(p^j) = (0,0') \in D_0 $. It is known that $ D_0 $ is biholomorphically equivalent to a bounded domain contained in the unit polydisc in $ \mathbb{C}^n $. As a consequence, $ K_{D_0} $ is non-vanishing along the diagonal. Furthermore, $ b_1 \neq 0 $, and hence $ D_0 $ is biholomorphic
to 
\[ 
D_{\infty} = \big \{ z \in
\mathbb{C}^n : 2 \Re z_1 + P(z', \overline{z}')  < 0 \big \}, 
\]
via a mapping that sends $ (0, 0') \in D_0 $ to $ q^0 = (-1,0') \in D_{\infty} $. Further, $ D_{\infty} $ is convex and Kobayashi complete hyperbolic. It follows by Theorem 2.6 of \cite{Gaussier-Zimmer} that the Kobayashi and the Carath\'{e}odory metrics coincide on $ D_{\infty} $. In particular, $ D_{\infty} $ equipped with the inner Carath\'{e}odory metric is complete hyperbolic. It follows from Theorem 5.1 of \cite{AGK} that $ D_{\infty} $ is Bergman complete and $ b_{D_{\infty}} $ is positive definite.

\medskip

\noindent Case (b): it follows from \cite{Zim2} that there are biholomorphisms $ A^{j} $ defined on $ D $ such that $D^{j} = A^{j}(D)$ converge in the local Hausdorff sense to a convex domain $D_{\infty}$ and the points $ q^j:= A^j(p^j) $ converge to $ q^0 = (0,0') \in D_{\infty} $. Further, 
$ D_{\infty} $ does not contain any non-trivial complex affine lines (refer the proof of Proposition 6.1 of \cite{Zim1} and hence, $ D_{\infty} $ is Kobayashi complete hyperbolic. From this point, the completeness of $ (D_{\infty}, d^b_{D_{\infty}} ) $ and positivity of $ b_{D_{\infty}}$ follows exactly as in case (a).

The convergence of the Bergman kernels $ K_{D^j} $ is guaranteed by Theorem 10.1 of \cite{Gaussier-Zimmer} in both the cases (a) and (b). %Furthermore, the proofs of Corollary \ref{C1}, Lemma \ref{L3}, Lemma \ref{L4}, Lemma \ref{L5}, Corollary \ref{L6} and Theorem \ref{T0} (b) go through verbatim. 
Hence, Theorem \ref{L5} and consequently, Corollary \ref{L6} is applicable here. It then follows using Proposition 4.2 of \cite{Zim1} that  $ \lim_{j \rightarrow \infty} h_D(p^j) = h_{D_{\infty}} (q^0)  $ in both the cases (a) and (b). The reasonings are similar as those in employed the prooving Theorem \ref{T0}(i)  and we shall not repeat the argument here.
 
 \section{Proof of Theoerem \ref{T0} (iv) - Strongly polyhedral domain $ D $:}\label{poly}  

\begin{defn} \label{D1}

A bounded domain $ \Omega \subset \mathbb{C}^n $ is said to be a strongly pseudoconvex polyhedral domain with piecewise smooth boundary if 
there exist $ l (\geq 2) $ real valued  $ C^2$-smooth functions 
$ \rho_1, \ldots, \rho_l : \mathbb{C}^n \rightarrow \mathbb{R} $ such that
\begin{itemize}
 \item [(i)] $ \Omega = \big\{ z \in \mathbb{C}^n: \rho_1(z) < 0, \ldots, \rho_l(z) < 0 \big\} $,
 \item [(ii)] for $ \{ i_1, \ldots, i_k \} \subset \{1, \ldots, l\} $, the gradient vectors $ \nabla \rho_{i_1}(p), \ldots, \nabla \rho_{i_k}(p) $ are linearly independent 
 over $ \mathbb{C} $ for every point $ p $ satisfying $ \rho_{i_1}(p) = \ldots = 
 \rho_{i_k}(p) = 0 $, and
 \item [(iii)] $ \partial \Omega $ is strongly pseudoconvex at every smooth boundary point.
\end{itemize} 
%where 
%\[
 %\nabla \rho_i (z) = 2 \left( \frac{\partial \rho_i}{\partial \bar{z}_1}(z), \ldots, \frac{\partial \rho_i}{\partial \bar{z}_n}(z) \right),
%\]
%for each $ i = 1, \ldots, l $ and $ z \in \mathbb{C}^n $. 
 
\end{defn}

Such a domain $ \Omega $ is necessarily pseudoconvex, since 
the intersection of finitely many domains of holomorphy is a domain of holomorphy. Further, there exist peak functions at each boundary point, and hence, any polyhedron domain as in Definition (\ref{D1}) is Carath\'{e}odory complete and consequently, Bergman complete.

If $ D \subset \mathbb{C}^2 $ is a polyhedral domain as in Definition \ref{D1}, then the proof of Theorem \ref{T0} (iv) divides into two parts:

\begin{enumerate}
 \item [(a)] $ \partial D $ is a smooth near $ p^0 $, or
 \item [(b)] $ \partial D $ is a non-smooth near $ p^0 $.
\end{enumerate}

In Case (a), $ \lim_{z \rightarrow p^0} h_D(z) = 0 $ using the proof of Theorem \ref{T0} (ii) and the localisation statement -- Theorem \ref{n10}.

In Case (b), applying the scaling method from \cite{Kim-Yu} -- there are biholomorphisms  $A^j: D \to D^j $ from $ D $ onto the scaled domains $ D^j$ such that $ D^j$ converge in the local Hausdorff sense to a domain $ D_{\infty} \subset \mathbb{C}^2 $ which is one of $ \mathbb{B}^2 $, the bidisc $ {\Delta}^2 $, or a Siegel domain of second kind (\cite{Piatetski-Shapiro}) given by 
\begin{equation} \label{Sie}
\tilde{D}= \left\{ z \in \mathbb{C}^2: \Re z_1 + 1 > \frac{ Q_1(z_2)}{m^2}, \Re z_2 > -1 \right\}, 
\end{equation}
where $ m > 0 $ and $ Q_1 $ is a strictly subharmonic polynomial of degree $ 2 $, and $q^j:=A^j(p^j) \to q^0 \in D_{\infty}$. 

It follows from \cite{Piatetski-Shapiro} that $ \tilde{D} $ is 
biholomorphic to a bounded domain in $ \mathbb{C}^2 $. In particular, the Bergman kernel $ K_{\tilde{D}} $ is non-vanishing along the diagonal. Moreover, $ \tilde{D} $ can be seen as the intersection of an open ball with a half space in $ \mathbb{C}^2 $ and hence, $ \tilde{D} $ is an unbounded convex domain. Furthermore, $ \tilde{D} $ is Kobayashi complete hyperbolic. As before, Theorem 2.6 of \cite{Gaussier-Zimmer} ensures that the Kobayashi and the Carath\'{e}odory metrics coincide on $ \tilde{D} $. As a consequence, $ \tilde{D} $ is complete with respect to the inner Carath\'{e}odory distance, which, in turn, implies that $ \tilde{D} $ is Bergman complete (see Theorem 5.1 of \cite{AGK}).

To summarize, in Case (b), the limit domain $ D_{\infty} $ is one of $ \mathbb{B}^2 $, $ {\Delta}^2 $, or a Siegel domain (as described by (\ref{Sie})). In particular, in each of these three cases, $ K_{D_{\infty}} $ is non-vanishing along the diagonal and $ (D_{\infty}, d^b_{D_\infty}) $ is Bergman complete. Since the limiting domain $ D_{\infty} $ is convex, Lemma 2.1 of \cite{BBMV} provides an analogue of Lemma \ref{L2} in the current setting.

Finally, Theorem \ref{L5} and Corollary \ref{L6} and the fact that $ D^j \subset 2 D_{\infty} $ for all $ j $ large ensure that $ \lim_{j \rightarrow \infty} h_D(p^j) = h_{D_{\infty}} (q^0) \big) $ in case (b) as before. This completes the proof of Theorem \ref{T0} (iv).
 
\section{Detecting Strong pseudoconvexity}

The proofs of Theorems \ref{T4} and \ref{T5} proceed much like that of Theorem 1.1 of \cite{MV2}. We include a concise proof here for clarity and completeness. 

\medskip

\noindent \textit{Proof of Theorems \ref{T4} and \ref{T5}:}
Let $ p^j \rightarrow p^0 \in \partial D $ along the inward normal to $ \partial D $ at $ p^0 $. Apply the scalings $ A^j : D \rightarrow D^j $ associated to the domain $ D $ and the sequence $ p^j \rightarrow p^0 $, %(cf \cite{Cho1}, \cite{Cho2}, \cite{TT} for Levi corank one case, \cite{McNeal}, \cite{Hefer} when $ \partial D $ is of convex finite type at $ p^0 $, and \cite{Zim2} when $ \partial D $ is of convex infinite type at $ p^0 $), 
so that the rescaled domains $ D^j $ converge to a limiting domain $ D_{\infty} $ and $ A^j(p^j) \rightarrow q^0 \in D_{\infty} $.  More specifically, if $ D $ is a Levi corank one domain, then the associated limit domain 
\begin{equation} \label{E52}
 D_{\infty}=\Big\{ z \in \mathbb{C}^n : 2 \Re z_1 + Q \left(z_n, \overline{z}_n \right) + \sum_{j=2}^{n-1} \vert z_j \vert^2 < 0 \Big\}.
\end{equation}
Moreover, since $ p^j \rightarrow p^0 $ normally, it follows that $ Q $ is a homogeneous subharmonic polynomial of degree $ 2m $ ($ 2m $ being the $1$-type of $ \partial D $ at $ p^0 $, where $m\geq 1$ is a positive integer). Furthermore, $ Q $ tallies with the polynomial of same degree in the homogeneous Taylor expansion of the defining function for $ \partial D $ around $ p^0 $.  Similarily, if $ D $ is of convex finite type near $ p^0 $, then 
\begin{equation} \label{E53}
D_{\infty} = \big \{ z \in
\mathbb{C}^n : 2 \Re z_1 + P(z', \overline{z}')  < 0 \big \}, 
\end{equation}
where $ P $ is a real convex polynomial. As before, the convergence $ p^j \rightarrow p^0 $ along the inner normal forces that $ P $ is of degree $ 2m' $, where $2m' $ is the $1$-type of $ \partial D $ at $ p^0 $,  and $ P $ is precisely the polynomial of degree $ 2m'$ that appears in the homogeneous Taylor expansion of the defining function for $ \partial D $ around $ p^0 $. 

Now, $ \lim_{j \rightarrow \infty} h_{D} (p^j) = 0 $ by hypothesis, and hence, it is immediate from Theorem \ref{T0} that $ h_{D_{\infty}} (q^0) = 0 $. But then Theorem \ref{T6} enforces that $ D_{\infty} $ must be biholomorphically equivalent to $ \mathbb{B}^n $. 

If $ D $ is convex infinite type near $ p^0 $, then $ (D_{\infty}, d^b_{D_{\infty}}) $ fails to be Gromov hyperbolic (cf. Theorem 3.1 and Proposition 6.1 of \cite{Zim1}). This, in turn, implies that there is no isometry of Bergman metrics between $ D_{\infty} $ and $ \mathbb{B}^n $.
This is a contradiction since $ D_{\infty} $ is biholomorphic to $ \mathbb{B}^n $. Hence, $ \partial D $ cannot be of infinite type near $ p^0 $, which implies that  $ \partial D $ has to be finite type near $ p^0 $.

If $ D $ is either Levi corank domain or of convex finite type near $ p^0 $, since the corresponding limit domain $ D_{\infty} $ (as described in (\ref{E52}) and (\ref{E53}) respectively) is a biholomorph of $ \mathbb{B}^n $, it follows that $ D_{\infty} $  is biholomorphic to an \textit{half-plane in $\mathbb{C}^n $}, namely to the unbounded representation of $ \mathbb{B}^n $

\[
 \Sigma = \big\{ z \in \mathbb{C}^n : 2 \Re z_1 + \abs{z_2}^2 + \abs{z_3}^2 + \ldots + 
\abs{z_{n}}^2 < 0 \big\}
\]
via an appropriate Cayley transform. Let $ \theta : D_{\infty} \rightarrow \Sigma $ be a biholomorphism from $ D_{\infty} $ onto $ \Sigma $. Moreover, it may be assumed that cluster set of $ {\theta} $ at some point
$ (\iota \alpha, 0') \in \partial D_{\infty} $ (where $ \alpha \in \mathbb{R} $) contains a  
point of $ \partial \Sigma $ different from the point at infinity on $ \partial \Sigma $. In this setting, Theorem 2.1 of \cite{Coupet-Pinchuk} applies so that $ {\theta} $ extends biholomorphically past the boundary of $ D_{\infty} $ to a neighbourhood  of the point $ (\iota \alpha, 0') \in \partial D_{\infty} $. Next, since both $ D_{\infty} $ and $ \Sigma $ are invariant under the translations of the form $ z \mapsto z + \iota s $, $ s \in \mathbb{R} $, it may be assumed that $ (\iota \alpha, 0') $ is the origin and $ \theta \big( (0,0') \big) = \big( (0,0') \big) \in \partial \Sigma $. Since the Levi form is preserved under biholomorphisms around a boundary point, it follows that $ \partial D_{\infty} $ is strongly pseudoconvex near the origin. This is just the assertion that $ \partial D $ is strongly pseudoconvex near $ p^0 $. Hence the result.
\qed

\begin{bibdiv}
\begin{biblist}

\bib{AGK}{article}{
   author={Ahn, Taeyong},
   author={Gaussier, Herv\'{e}},
   author={Kim, Kang-Tae},
   title={Positivity and completeness of invariant metrics},
   journal={J. Geom. Anal.},
   volume={26},
   date={2016},
   number={2},
   pages={1173--1185},
   issn={1050-6926},
   review={\MR{3472832}},
   doi={10.1007/s12220-015-9587-5},
}

\bib{BMV}{article}{
   author={Balakumar, G. P.},
   author={Mahajan, Prachi},
   author={Verma, Kaushal},
   title={Bounds for invariant distances on pseudoconvex Levi corank one
   domains and applications},
   language={English, with English and French summaries},
   journal={Ann. Fac. Sci. Toulouse Math. (6)},
   volume={24},
   date={2015},
   number={2},
   pages={281--388},
   issn={0240-2963},
   review={\MR{3358614}},
   doi={10.5802/afst.1449},
}

\bib{BBMV}{article}{
Author={Balakumar, G. P.},
Author={Borah, Diganta},
Author={Mahajan, Prachi},
Author={Verma, Kaushal},
TITLE = {Remarks on the higher dimensional {S}uita conjecture},
   JOURNAL = {Proc. Amer. Math. Soc.},
  FJOURNAL = {Proceedings of the American Mathematical Society},
    VOLUME = {147},
      YEAR = {2019},
    NUMBER = {8},
     PAGES = {3401--3411},
      ISSN = {0002-9939},
   MRCLASS = {32F45 (32A07 32A25)},
  MRNUMBER = {3981118},
MRREVIEWER = {Pawe\l  Zapa\l owski},
       DOI = {10.1090/proc/14421},
       URL = {https://doi.org/10.1090/proc/14421},
}

\bib{BF}{article}{
   author={Bedford, Eric},
   author={Fornaess, John Erik},
   title={A construction of peak functions on weakly pseudoconvex domains},
   journal={Ann. of Math. (2)},
   volume={107},
   date={1978},
   number={3},
   pages={555--568},
   issn={0003-486X},
   review={\MR{0492400}},
   doi={10.2307/1971128},
}

\bib{BP}{article}{
    AUTHOR = {Bedford, Eric},
    Author = {Pinchuk, Sergey},
     TITLE = {Domains in {${\bf C}^{n+1}$} with noncompact automorphism
              group},
   JOURNAL = {J. Geom. Anal.},
  FJOURNAL = {The Journal of Geometric Analysis},
    VOLUME = {1},
      YEAR = {1991},
    NUMBER = {3},
     PAGES = {165--191},
      ISSN = {1050-6926},
   MRCLASS = {32F15 (32A07 32M05)},
  MRNUMBER = {1120679},
MRREVIEWER = {Akio Kodama},
       DOI = {10.1007/BF02921302},
       URL = {https://doi.org/10.1007/BF02921302},
}

\bib{BSY}{article}{
AUTHOR = {Boas, Harold P.},
AUTHOR = {Straube, Emil J.},
AUTHOr = {Yu, Ji Ye},
     TITLE = {Boundary limits of the {B}ergman kernel and metric},
   JOURNAL = {Michigan Math. J.},
  FJOURNAL = {Michigan Mathematical Journal},
    VOLUME = {42},
      YEAR = {1995},
    NUMBER = {3},
     PAGES = {449--461},
      ISSN = {0026-2285},
   MRCLASS = {32H10 (32F15 32N15)},
  MRNUMBER = {1357618},
MRREVIEWER = {Gregor Herbort},
       DOI = {10.1307/mmj/1029005306},
       URL = {https://doi.org/10.1307/mmj/1029005306},
}

\bib{Cho1}{article}{
    AUTHOR = {Cho, Sanghyun},
     TITLE = {Estimates of invariant metrics on some pseudoconvex domains in
              {$\bold C^n$}},
   JOURNAL = {J. Korean Math. Soc.},
  FJOURNAL = {Journal of the Korean Mathematical Society},
    VOLUME = {32},
      YEAR = {1995},
    NUMBER = {4},
     PAGES = {661--678},
      ISSN = {0304-9914},
   MRCLASS = {32H15 (32H10)},
  MRNUMBER = {1364580},
MRREVIEWER = {Gregor Herbort},
}
		
\bib{Cho2}{article}{
    AUTHOR = {Cho, Sanghyun},
     TITLE = {Boundary behavior of the {B}ergman kernel function on some
              pseudoconvex domains in {${\bf C}^n$}},
   JOURNAL = {Trans. Amer. Math. Soc.},
  FJOURNAL = {Transactions of the American Mathematical Society},
    VOLUME = {345},
      YEAR = {1994},
    NUMBER = {2},
     PAGES = {803--817},
      ISSN = {0002-9947},
   MRCLASS = {32H10 (32H15)},
  MRNUMBER = {1254189},
MRREVIEWER = {So-Chin Chen},
       DOI = {10.2307/2154999},
       URL = {https://doi.org/10.2307/2154999},
}

\bib{Coupet-Pinchuk}{article}{
   AUTHOR = {Coupet, B.},
   AUTHOR = {Pinchuk, S.},
     TITLE = {Holomorphic equivalence problem for weighted homogeneous rigid
              domains in {${\mathbb C}^{n+1}$}},
 BOOKTITLE = {Complex analysis in modern mathematics ({R}ussian)},
     PAGES = {57--70},
 PUBLISHER = {FAZIS, Moscow},
      YEAR = {2001},
   MRCLASS = {32M05 (32A07 32F18 32H02 32H40)},
  MRNUMBER = {1833505},
MRREVIEWER = {John Bland},
}

\bib{DiedrichHerbort-1994}{article}{
AUTHOR = {Diederich, Klas},
Author = {Herbort, Gregor},
     TITLE = {Pseudoconvex domains of semiregular type},
 BOOKTITLE = {Contributions to complex analysis and analytic geometry},
    SERIES = {Aspects Math., E26},
     PAGES = {127--161},
 PUBLISHER = {Friedr. Vieweg, Braunschweig},
      YEAR = {1994},
   MRCLASS = {32F15 (32E25 32E35)},
  MRNUMBER = {1319347},
MRREVIEWER = {Alan V. Noell},
}

\bib{Frid1}{article}{
  AUTHOR = {Fridman, B. L.},
     TITLE = {Imbedding of a strictly pseudoconvex domain in a polyhedron},
   JOURNAL = {Dokl. Akad. Nauk SSSR},
  FJOURNAL = {Doklady Akademii Nauk SSSR},
    VOLUME = {249},
      YEAR = {1979},
    NUMBER = {1},
     PAGES = {63--67},
      ISSN = {0002-3264},
   MRCLASS = {32F15 (32H99)},
  MRNUMBER = {553986},
MRREVIEWER = {Harold P. Boas},
}

\bib{Frid2}{article}{
AUTHOR = {Fridman, B. L.},
     TITLE = {Biholomorphic invariants of a hyperbolic manifold and some
              applications},
   JOURNAL = {Trans. Amer. Math. Soc.},
  FJOURNAL = {Transactions of the American Mathematical Society},
    VOLUME = {276},
      YEAR = {1983},
    NUMBER = {2},
     PAGES = {685--698},
      ISSN = {0002-9947},
   MRCLASS = {32H20 (32F15)},
  MRNUMBER = {688970},
MRREVIEWER = {J. T. Davidov},
       DOI = {10.2307/1999076},
       URL = {https://doi.org/10.2307/1999076},
}

\bib{Gaussier-Zimmer}{article}{
 AUTHOR = {Gaussier, Herv\'{e}}
 Author = {Zimmer, Andrew},
     TITLE = {The space of convex domains in complex {E}uclidean space},
   JOURNAL = {J. Geom. Anal.},
  FJOURNAL = {Journal of Geometric Analysis},
    VOLUME = {30},
      YEAR = {2020},
    NUMBER = {2},
     PAGES = {1312--1358},
      ISSN = {1050-6926},
   MRCLASS = {32F45 (32F17)},
  MRNUMBER = {4081315},
       DOI = {10.1007/s12220-019-00346-5},
       URL = {https://doi.org/10.1007/s12220-019-00346-5},
}

\bib{Hahn1}{article}{
AUTHOR= {Hahn, Kyong T.},
     TITLE = {On completeness of the {B}ergman metric and its subordinate
              metric},
   JOURNAL = {Proc. Nat. Acad. Sci. U.S.A.},
  FJOURNAL = {Proceedings of the National Academy of Sciences of the United
              States of America},
    VOLUME = {73},
      YEAR = {1976},
    NUMBER = {12},
     PAGES = {4294},
      ISSN = {0027-8424},
   MRCLASS = {32H15},
  MRNUMBER = {417459},
MRREVIEWER = {John E. Forn\ae ss},
       DOI = {10.1073/pnas.73.12.4294},
       URL = {https://doi.org/10.1073/pnas.73.12.4294},
}

\bib{Hahn2}{article}{
AUTHOR = {Hahn, Kyong T.},
     TITLE = {On completeness of the {B}ergman metric and its subordinate
              metrics. {II}},
   JOURNAL = {Pacific J. Math.},
  FJOURNAL = {Pacific Journal of Mathematics},
    VOLUME = {68},
      YEAR = {1977},
    NUMBER = {2},
     PAGES = {437--446},
      ISSN = {0030-8730},
   MRCLASS = {32H15},
  MRNUMBER = {486653},
MRREVIEWER = {John E. Forn\ae ss},
       URL = {http://projecteuclid.org/euclid.pjm/1102817247},
}
	
\bib{Hahn3}{article}{	
 AUTHOR = {Hahn, Kyong T.},
     TITLE = {Inequality between the {B}ergman metric and {C}arath\'{e}odory
              differential metric},
   JOURNAL = {Proc. Amer. Math. Soc.},
  FJOURNAL = {Proceedings of the American Mathematical Society},
    VOLUME = {68},
      YEAR = {1978},
    NUMBER = {2},
     PAGES = {193--194},
      ISSN = {0002-9939},
   MRCLASS = {32H15},
  MRNUMBER = {477166},
MRREVIEWER = {Jacob Burbea},
       DOI = {10.2307/2041770},
       URL = {https://doi.org/10.2307/2041770},
}

\bib{Hefer}{article}{
AUTHOR = {Hefer, T.},
     TITLE = {H\"{o}lder and {$L^p$} estimates for {$\overline\partial$} on
              convex domains of finite type depending on {C}atlin's
              multitype},
   JOURNAL = {Math. Z.},
  FJOURNAL = {Mathematische Zeitschrift},
    VOLUME = {242},
      YEAR = {2002},
    NUMBER = {2},
     PAGES = {367--398},
      ISSN = {0025-5874},
   MRCLASS = {32W05 (32A26 32F18 32T25)},
  MRNUMBER = {1980628},
MRREVIEWER = {Bert Fischer},
       DOI = {10.1007/s002090100338},
       URL = {https://doi.org/10.1007/s002090100338},
}

\bib{Hor}{article}{
   AUTHOR = {H\"{o}rmander, Lars},
     TITLE = {{$L^{2}$} estimates and existence theorems for the {$\bar
              \partial $} operator},
   JOURNAL = {Acta Math.},
  FJOURNAL = {Acta Mathematica},
    VOLUME = {113},
      YEAR = {1965},
     PAGES = {89--152},
      ISSN = {0001-5962},
   MRCLASS = {35.01 (32.22)},
  MRNUMBER = {179443},
MRREVIEWER = {M. Schechter},
       DOI = {10.1007/BF02391775},
       URL = {https://doi.org/10.1007/BF02391775},
}

\bib{JP}{book}{
  AUTHOR = {Jarnicki, Marek},
  Author = {Pflug, Peter},
     TITLE = {Invariant distances and metrics in complex analysis},
    SERIES = {De Gruyter Expositions in Mathematics},
    VOLUME = {9},
   EDITION = {extended},
 PUBLISHER = {Walter de Gruyter GmbH \& Co. KG, Berlin},
      YEAR = {2013},
     PAGES = {xviii+861},
      ISBN = {978-3-11-025043-5; 978-3-11-025386-3},
   MRCLASS = {32-02 (32F45)},
  MRNUMBER = {3114789},
MRREVIEWER = {Viorel V\^{a}j\^{a}itu},
       DOI = {10.1515/9783110253863},
       URL = {https://doi.org/10.1515/9783110253863},
}

\bib{Kim-Yu}{article}{
author={Kim, Kang-Tae},
author={Yu, Jiye},
     TITLE = {Boundary behavior of the {B}ergman curvature in strictly
              pseudoconvex polyhedral domains},
   JOURNAL = {Pacific J. Math.},
  FJOURNAL = {Pacific Journal of Mathematics},
    VOLUME = {176},
      YEAR = {1996},
    NUMBER = {1},
     PAGES = {141--163},
      ISSN = {0030-8730},
   MRCLASS = {32H10 (32F15)},
  MRNUMBER = {1433986},
MRREVIEWER = {Gregor Herbort},
       URL = {http://projecteuclid.org/euclid.pjm/1102352055},
}

\bib{Lu}{article}{
    AUTHOR = {, K. H. Look (=,Qi-Keng Lu)},
     TITLE = {Schwarz lemma and analytic invariants},
   JOURNAL = {Sci. Sinica},
  FJOURNAL = {Scientia Sinica. Zhongguo Kexue},
    VOLUME = {7},
      YEAR = {1958},
     PAGES = {453--504},
      ISSN = {0582-236x},
   MRCLASS = {32.00},
  MRNUMBER = {106294},
MRREVIEWER = {F. D. Quigley},
}

\bib{MV}{article}{
    AUTHOR = {Mahajan, Prachi},
    AUTHOR={Verma, Kaushal},
     TITLE = {Some aspects of the {K}obayashi and {C}arath\'{e}odory metrics on
              pseudoconvex domains},
   JOURNAL = {J. Geom. Anal.},
  FJOURNAL = {Journal of Geometric Analysis},
    VOLUME = {22},
      YEAR = {2012},
    NUMBER = {2},
     PAGES = {491--560},
      ISSN = {1050-6926},
   MRCLASS = {32F45 (32Q45)},
  MRNUMBER = {2891736},
MRREVIEWER = {Gregor Herbort},
       DOI = {10.1007/s12220-010-9206-4},
       URL = {https://doi.org/10.1007/s12220-010-9206-4},
}

\bib{MV2}{article}{
  AUTHOR = {Mahajan, Prachi}
  AUTHOR = {Verma, Kaushal},
     TITLE = {A comparison of two biholomorphic invariants},
   JOURNAL = {Internat. J. Math.},
  FJOURNAL = {International Journal of Mathematics},
    VOLUME = {30},
      YEAR = {2019},
    NUMBER = {1},
     PAGES = {1950012, 16},
      ISSN = {0129-167X},
   MRCLASS = {32F45 (32Q45)},
  MRNUMBER = {3916275},
MRREVIEWER = {An Wang},
       DOI = {10.1142/S0129167X19500125},
       URL = {https://doi.org/10.1142/S0129167X19500125},
}

\bib{McNeal}{article}{
    AUTHOR = {McNeal, J. D.},
     TITLE = {Convex domains of finite type},
   JOURNAL = {J. Funct. Anal.},
  FJOURNAL = {Journal of Functional Analysis},
    VOLUME = {108},
      YEAR = {1992},
    NUMBER = {2},
     PAGES = {361--373},
      ISSN = {0022-1236},
   MRCLASS = {32F05 (32F99)},
  MRNUMBER = {1176680},
MRREVIEWER = {Emil Straube},
       DOI = {10.1016/0022-1236(92)90029-I},
       URL = {https://doi.org/10.1016/0022-1236(92)90029-I},
}

\bib{NPT}{article}{
  AUTHOR = {Nikolov, N.},
  Author = {Pflug, P.},
  Author = {Thomas, P.},
     TITLE = {On different extremal bases for {$\Bbb{C}$}-convex domains},
   JOURNAL = {Proc. Amer. Math. Soc.},
  FJOURNAL = {Proceedings of the American Mathematical Society},
    VOLUME = {141},
      YEAR = {2013},
    NUMBER = {9},
     PAGES = {3223--3230},
      ISSN = {0002-9939},
   MRCLASS = {32F17},
  MRNUMBER = {3068975},
MRREVIEWER = {Sergey Ivashkovich},
       DOI = {10.1090/S0002-9939-2013-11584-4},
       URL = {https://doi.org/10.1090/S0002-9939-2013-11584-4},
}
		
\bib{NV}{article}{
		  AUTHOR = {Nikolov, Nikolai},
		  Author = {Verma, Kaushal},
     TITLE = {On the squeezing function and {F}ridman invariants},
   JOURNAL = {J. Geom. Anal.},
  FJOURNAL = {Journal of Geometric Analysis},
    VOLUME = {30},
      YEAR = {2020},
    NUMBER = {2},
     PAGES = {1218--1225},
      ISSN = {1050-6926},
   MRCLASS = {32F45 (32T25)},
  MRNUMBER = {4081309},
MRREVIEWER = {Diganta Borah},
       DOI = {10.1007/s12220-019-00237-9},
       URL = {https://doi.org/10.1007/s12220-019-00237-9},
}

\bib{Piatetski-Shapiro}{book}{
    AUTHOR = {Pyateskii-Shapiro, I. I.},
     TITLE = {Automorphic functions and the geometry of classical domains},
    SERIES = {Translated from the Russian. Mathematics and Its Applications,
              Vol. 8 },
 PUBLISHER = {Gordon and Breach Science Publishers, New York-London-Paris},
      YEAR = {1969},
     PAGES = {viii+264},
   MRCLASS = {32.65},
  MRNUMBER = {0252690},
}

\bib{Pinchuk}{article}{
   author={Pin\v cuk, S. I.},
   title={Holomorphic inequivalence of certain classes of domains in ${\bf
   C}^{n}$},
   language={Russian},
   journal={Mat. Sb. (N.S.)},
   volume={111(153)},
   date={1980},
   number={1},
   pages={67--94, 159},
   issn={0368-8666},
   review={\MR{560464}},
}

\bib{RN}{book}{		
 AUTHOR = {Narasimhan, Raghavan},
     TITLE = {Several complex variables},
    SERIES = {Chicago Lectures in Mathematics},
 PUBLISHER = {University of Chicago Press, Chicago, Ill.-London},
      YEAR = {1971},
     PAGES = {x+174},
   MRCLASS = {32-01},
  MRNUMBER = {342725},
MRREVIEWER = {J. Kajiwara},
}

\bib{Rudin}{book}{
AUTHOR = {Rudin, Walter},
     TITLE = {Function theory in the unit ball of {${\bf C}^{n}$}},
    SERIES = {Grundlehren der Mathematischen Wissenschaften},
    VOLUME = {241},
 PUBLISHER = {Springer-Verlag, New York-Berlin},
      YEAR = {1980},
     PAGES = {xiii+436},
      ISBN = {0-387-90514-6},
   MRCLASS = {32-01 (32A07 32A10)},
  MRNUMBER = {601594},
MRREVIEWER = {L. I. Ronkin},
}
		
\bib{TT}{article}{
   author={Thai, Do Duc},
   author={Thu, Ninh Van},
   title={Characterization of domains in $\Bbb C^n$ by their noncompact
   automorphism groups},
   journal={Nagoya Math. J.},
   volume={196},
   date={2009},
   pages={135--160},
   issn={0027-7630},
   review={\MR{2591094}},
   doi={10.1017/S002776300000982X},
}

\bib{Walter}{article}{
  AUTHOR = {Walter, Wolfgang},
     TITLE = {Ordinary differential equations},
    SERIES = {Graduate Texts in Mathematics},
    VOLUME = {182},
      NOTE = {Translated from the sixth German (1996) edition by Russell
              Thompson,
              Readings in Mathematics},
 PUBLISHER = {Springer-Verlag, New York},
      YEAR = {1998},
     PAGES = {xii+380},
      ISBN = {0-387-98459-3},
   MRCLASS = {34-01},
  MRNUMBER = {1629775},
       DOI = {10.1007/978-1-4612-0601-9},
       URL = {https://doi.org/10.1007/978-1-4612-0601-9},
}
\bib{Yu-thesis}{article}{
   AUTHOR = {Yu, Jiye},
     TITLE = {Geometric analysis on weakly pseudoconvex domains},
      NOTE = {Thesis (Ph.D.)--Washington University in St. Louis},
 PUBLISHER = {ProQuest LLC, Ann Arbor, MI},
      YEAR = {1993},
     PAGES = {115},
   MRCLASS = {Thesis},
  MRNUMBER = {2690273},
       URL =
              {http://gateway.proquest.com/openurl?url_ver=Z39.88-2004&rft_val_fmt=info:ofi/fmt:kev:mtx:dissertation&res_dat=xri:pqdiss&rft_dat=xri:pqdiss:9409089},
}

\bib{JY1}{article}{
 AUTHOR = {Yu, Jiye},
     TITLE = {Peak functions on weakly pseudoconvex domains},
   JOURNAL = {Indiana Univ. Math. J.},
  FJOURNAL = {Indiana University Mathematics Journal},
    VOLUME = {43},
      YEAR = {1994},
    NUMBER = {4},
     PAGES = {1271--1295},
      ISSN = {0022-2518},
   MRCLASS = {32E35 (32F05 32H15)},
  MRNUMBER = {1322619},
MRREVIEWER = {Gregor Herbort},
       DOI = {10.1512/iumj.1994.43.43055},
       URL = {https://doi.org/10.1512/iumj.1994.43.43055},
}

\bib{Zim1}{article}{
  AUTHOR = {Zimmer, Andrew M.},
     TITLE = {Gromov hyperbolicity and the {K}obayashi metric on convex
              domains of finite type},
   JOURNAL = {Math. Ann.},
  FJOURNAL = {Mathematische Annalen},
    VOLUME = {365},
      YEAR = {2016},
    NUMBER = {3-4},
     PAGES = {1425--1498},
      ISSN = {0025-5831},
   MRCLASS = {32F18 (32F45 51F99 53C23)},
  MRNUMBER = {3521096},
MRREVIEWER = {Adriano Tomassini},
       DOI = {10.1007/s00208-015-1278-9},
       URL = {https://doi.org/10.1007/s00208-015-1278-9},
}

\bib{Zim2}{article}{
    AUTHOR = {Zimmer, Andrew},
     TITLE = {A gap theorem for the complex geometry of convex domains},
   JOURNAL = {Trans. Amer. Math. Soc.},
  FJOURNAL = {Transactions of the American Mathematical Society},
    VOLUME = {370},
      YEAR = {2018},
    NUMBER = {10},
     PAGES = {7489--7509},
      ISSN = {0002-9947},
   MRCLASS = {32T15 (32F45 32Q15 53C24)},
  MRNUMBER = {3841856},
MRREVIEWER = {Arkadiusz Lewandowski},
       DOI = {10.1090/tran/7284},
       URL = {https://doi.org/10.1090/tran/7284},
}

\end{biblist}
\end{bibdiv}

\end{document}